\newtheorem{theorem}{Theorem}[section]
\newtheorem{lemma}[theorem]{Lemma}
\newtheorem{proposition}[theorem]{Proposition}
\newtheorem{corollary}[theorem]{Corollary}
\newtheorem{question}[theorem]{Question}
\theoremstyle{definition}
\newtheorem{definition}[theorem]{Definition}
\newtheorem{remark}[theorem]{Remark}
\newtheorem{example}[theorem]{Example}
\newcommand{\Z}{\mathbb{Z}}
\newcommand{\hb}{\langle \hsmoothing \rangle}
\newcommand{\vb}{\langle \smoothing \rangle}
\DeclareMathOperator{\spn}{span}
\DeclareMathOperator{\tri}{tri}
\DeclareMathOperator{\diam}{diam}
\title{ Quotients of the Gordian and H(2)-Gordian graphs}
\author{Christopher Flippen}
\author{Allison H. Moore}
\author{Essak Seddiq}
\address{Christopher Flippen\\ Department of Mathematics \& Applied Mathematics \\ 
   Virginia Commonwealth University \\Richmond, VA 23284 \\ USA  
   }
\address{Allison H. Moore\\ Department of Mathematics \& Applied Mathematics \\ 
   Virginia Commonwealth University \\Richmond, VA 23284 \\ USA  
   }
\address{Essak Seddiq\\ Department of Mathematics \& Applied Mathematics \\ 
   Virginia Commonwealth University \\Richmond, VA 23284 \\ USA  
   }
\subjclass{57K10, 57K14 (primary)}
\begin{document}

\begin{abstract}
The Gordian graph and H(2)-Gordian graphs of knots are abstract graphs whose vertex sets represent isotopy classes of unoriented knots, and whose edge sets record whether pairs of knots are related by crossing changes or H(2)-moves, respectively. We investigate quotients of these graphs under equivalence relations defined by several knot invariants including the determinant, the span of the Jones polynomial, and an invariant related to tricolorability. We show, in all cases considered, that the quotient graphs are Gromov hyperbolic. We then prove a collection of results about the graph isomorphism type of the quotient graphs. In particular, we find that the H(2)-Gordian graph of links modulo the relation induced by the span of the Jones polynomial is isomorphic with the complete graph on infinitely many vertices. 
\end{abstract}
\maketitle

\section{Introduction}

The Gordian graph is an abstract graph that organizes the set of knots related by crossing changes. More specifically, the vertex set of the Gordian graph $\mathcal{K}_{\backoverslash}$ corresponds with isotopy classes of knots in the three-sphere. A pair of vertices $[K], [J]$ determines an edge whenever knots $K$ and $J$ are related by a crossing change. The Gordian graph is immensely complex. It is a countably infinite graph of infinite diameter with infinite valence at every vertex. Despite the abundance of edges in this graph, measuring the Gordian distance between an arbitrary pair of vertices remains an intractable problem. By Gordian distance, we mean the minimal number of crossing changes required to transform a knot $K$ into a knot $J$ in any sequence of knot diagrams. Gordian distance generalizes the unknotting number of a knot, a well-known knot invariant that is easily defined but difficult to calculate.

In  \cite{JLM}, the authors defined and studied a general class of \emph{knot graphs}, which includes both the Gordian graph, the H(2)-Gordian graph (see section \ref{subsec:knotgraphs} for a definition) and other graphs constructed with different local unknotting operations. The class of knot graphs also includes quotients of the Gordian graph and its relatives under equivalence relations defined by knot invariants. Although the global structure of such knot graphs is not well-understood, there are a few sporadic results. Hirasawa and Uchida showed that every vertex is contained in the complete graph on countably many vertices \cite{HirasawaUchida}.  Zhang, Yang and Lei adapted this result to the H($n$)-Gordian graph \cite{ZhangYang2018, ZhangYang}. Gambaudo and Ghys constructed a quasi-isometric embedding of a Euclidean lattice in the Gordian graph \cite{GambaudoGhys}, and Jabuka, Liu and Moore proved that knot graphs in general fail to be Gromov hyperbolic \cite{JLM}. Other articles that have considered the structure and quotients of knot graphs include \cite{Baader1, Blair, IchiharaJong, NakanishiOhyama, Yoshiyuki}, to name a few. 

In this article, we further investigate quotients of the Gordian graph and H(2)-Gordian graphs under a collection of knot invariants, all of which can be derived from the Jones polynomial. In particular, we will consider the Gordian graph $\mathcal{K}_{\backoverslash}$ and H(2)-Gordian graph $\mathcal{K}_{\smoothing}$, and their quotients under the equivalence relations defined by the span of the Jones polynomial, the determinant, and an invariant $\beta$ that is related to the tricoloring number of the knot. The first question we consider is whether these quotient graphs are hyperbolic. Recall that in a metric graph, a geodesic triangle is $\delta$-thin if each edge of the triangle is contained in a closed $\delta$-neighborhood of the union of the remaining two. A graph is called $\delta$-hyperbolic (or Gromov hyperbolic) if every geodesic triangle is $\delta$-thin for some parameter $\delta\geq0$. (See section \ref{sec:hyperbolic background} for details.) We show:

\begin{theorem}
\label{main hyperbolicity}
The quotients of the Gordian and H(2)-Gordian graphs by the span of the Jones polynomial, the determinant, and the invariant $\beta$ are all Gromov hyperbolic.
\end{theorem}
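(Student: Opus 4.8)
The plan is to show that each quotient graph has finite diameter, since any graph of finite diameter is trivially Gromov hyperbolic (with $\delta$ at most the diameter). So the real content is a diameter bound for each of the six graphs in question: $\mathcal{K}_{\backoverslash}$ and $\mathcal{K}_{\smoothing}$ modulo, respectively, the span of the Jones polynomial, the determinant, and the invariant $\beta$. I would organize the argument invariant by invariant. For each invariant $v$, the vertices of the quotient are the equivalence classes of knots (or links) sharing the same value of $v$, and two classes are adjacent iff some representative of one is related to some representative of the other by the relevant local move (a crossing change, or an H(2)-move). The key is therefore to understand, for each invariant, the full set of values it attains, and then to show that any two values can be joined by a short path of "reachable" values.

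The first main step is to pin down the value set and the move-induced adjacencies. For the determinant under crossing changes: a crossing change on a knot alters the determinant in a controlled way (the classical skein/unknotting relation for the Alexander polynomial at $t=-1$ bounds $|\det K - \det J|$ or relates $\det$ of the two resolutions), and since the unknot has determinant $1$ while suitable twist knots realize every odd positive integer, one shows every determinant class is within a bounded number of crossing-change steps of the class of the unknot — giving a diameter bound (likely diameter $2$, with the unknot class as a universal near-center, or a small explicit constant). For the span of the Jones polynomial, I would use that the span is a non-negative integer, that it is $0$ exactly for the unknot, that an H(2)-move (or crossing change) changes the span by a bounded amount, and — crucially — that there exist families of knots/links realizing all sufficiently large spans that are one move away from something of much smaller span (e.g. $(2,n)$ torus links, whose Jones polynomials are explicit). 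In the H(2) case for links, the abstract already signals that the quotient by span is the complete graph $K_\infty$, so there the diameter is simply $1$; that is the cleanest sub-case. The invariant $\beta$ (tied to the tricoloring number) takes values in a set of the form $\{1\}\cup\{3^k : k \geq \dots\}$ or similar, and the same template applies: show the trivial value is universally reachable in a bounded number of steps.

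The second step, once each value set and the "one move changes $v$ by at most $C$, and the trivial value is realized one move from a representative of any class" statements are in hand, is the purely graph-theoretic conclusion: finite diameter $\Rightarrow$ $\delta$-hyperbolic. This is immediate — in a graph of diameter $D$, every geodesic triangle has all three sides of length $\le D$, so each side lies in the $D$-neighborhood of the other two, hence the graph is $D$-hyperbolic. I would state this as a small lemma and then apply it six times.

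The main obstacle I anticipate is not the hyperbolicity deduction but the knot-theoretic input: for each invariant one must exhibit explicit knot (or link) families that (i) realize all (large) values of the invariant and (ii) are provably connected by a single local move to a representative of a small-value class. Item (ii) requires care — it is easy to find knots with a given determinant or span, but one must verify the specific move (crossing change vs. H(2)-move) actually relates the chosen representatives, which typically means producing explicit diagrams (twist knots, $(2,n)$ torus links) and tracking the invariant through the skein relation. The determinant and span cases should follow from standard computations with the Jones/Alexander skein relations; the $\beta$ case will likely hinge on how $\beta$ was defined in this paper and on the behavior of Fox tricolorings under the moves, so I would lean on whatever lemmas about $\beta$ precede this theorem in the paper.
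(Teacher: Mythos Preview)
Your finite-diameter strategy is exactly what the paper does for the determinant and the span (diameter at most $2$, via twist knots for crossing changes and $T(2,n)$ torus knots for H(2)-moves, each one move from the unknot), so those four cases are fine.

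The genuine gap is the $\beta$ case. Your template ``show the trivial value is universally reachable in a bounded number of steps'' cannot work here. Proposition~\ref{beta bound} (which follows from Proposition~\ref{bound}) says that a single crossing change or H(2)-move changes $\beta$ by at most $1$. Since $\beta$ realizes every positive integer (via $\#^n T(2,3)$, which has $\beta = n+1$), the quotient graph $\mathcal{QK}^{\beta}_{\backoverslash}$ (and likewise $\mathcal{QK}^{\beta}_{\smoothing}$) has \emph{infinite} diameter: the class $[\beta = n]$ is at distance exactly $n-1$ from the unknot class. So there is no finite-diameter bound to invoke. The paper's argument is instead to observe that the bound $|\beta(K)-\beta(K')|\le 1$, together with the edges $\{[n],[n+1]\}$ coming from $\#^{n-1}T(2,3)$ and $\#^{n}T(2,3)$, forces the quotient to be exactly the infinite path graph $P_{\mathbb{N}}$. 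This is a tree, hence $0$-hyperbolic --- but not by a diameter argument.

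A smaller point: your parenthetical that a crossing change or H(2)-move ``changes the span by a bounded amount'' is false (unknotting $K(2,n)$ drops the span from $n+2$ to $0$), and likewise for the determinant. This is harmless --- in fact it is precisely the \emph{unboundedness} of the change that makes the diameter-$2$ argument work --- but you should drop that claim.
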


After establishing the hyperbolicity of these graphs, we turn to the question of their graph isomorphism types. We first observe that in the case of the invariant $\beta$, the quotient graphs are quickly seen to be isomorphic with infinite path graphs. Let $P_{\mathbb{N}}$ denote the infinite path graph indexed by positive integers. In particular, the vertex set of $P_{\mathbb{N}}$ corresponds with the set $\{n \mid n>0\}$, and the edge set is the set of pairs $\{ \{n, n+1\} \mid n>0 \}$.  

\begin{theorem}
\label{beta structure intro}
The quotient of the Gordian and H(2)-Gordian graphs with respect to the invariant $\beta$ are both isomorphic to $P_{\mathbb{N}}$. 
\end{theorem}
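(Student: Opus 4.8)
The plan is to produce, in each case, an explicit graph isomorphism onto $P_{\mathbb N}$ by checking three things about the invariant $\beta$ (normalized so that the tricoloring number of a knot $K$ equals $3^{\beta(K)}$, i.e.\ $\beta(K)=1+\dim_{\F_3}H_1(\Sigma_2(K);\F_3)\ge 1$). First, \emph{surjectivity}: $\beta$ takes every value in $\{n\mid n>0\}$, so the vertex sets of the quotient and of $P_{\mathbb N}$ agree. Second, a \emph{one-Lipschitz bound}: a single crossing change (respectively, a single H(2)-move) changes $\beta$ by at most $1$; since the quotient of a knot graph is a loopless simple graph, this forces every edge of the quotient to join consecutive integers. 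Third, \emph{realization}: for every $n\ge 1$ there is a crossing change (respectively, an H(2)-move) between a representative with $\beta=n$ and one with $\beta=n+1$, so every edge $\{n,n+1\}$ of $P_{\mathbb N}$ is present. Granting these, the map sending a class to its $\beta$-value is a bijection of vertex sets carrying edges to edges in both directions, hence the desired isomorphism; the argument is identical for $\mathcal K_{\backoverslash}$ and $\mathcal K_{\smoothing}$.

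For surjectivity and realization I would use connected sums of trefoils. Since $\Sigma_2(K_1\#K_2)=\Sigma_2(K_1)\#\Sigma_2(K_2)$, the quantity $\dim_{\F_3}H_1(\Sigma_2(\cdot);\F_3)$ is additive under connected sum, and the trefoil contributes $1$; hence $\beta(\#^{m}3_1)=m+1$, with $m=0$ the unknot, so $\beta$ realizes every positive integer. Unknotting a single trefoil summand of $\#^{m}3_1$ by one crossing change produces $\#^{m-1}3_1$, a pair with $\beta$-values $m+1$ and $m$; this supplies the edge $\{m,m+1\}$ in the Gordian quotient for every $m\ge1$. The same family works for $\mathcal K_{\smoothing}$ because a single H(2)-move takes the trefoil to the unknot, so performing that move inside one summand sends $\#^{m}3_1$ to $\#^{m-1}3_1$ and again supplies every consecutive edge.

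The crux is the one-Lipschitz bound. For a crossing change it is immediate from the Goeritz matrix: a crossing change does not alter the underlying projection, hence not the checkerboard colouring nor the size of the Goeritz matrix $G$; it merely flips the type of one crossing, which changes two off-diagonal entries and the two corresponding diagonal entries of $G$ by $\pm2$. Concretely $G\mapsto G\pm 2\,(e_a-e_b)(e_a-e_b)^{\mathsf{T}}$ (or $G\mapsto G\pm 2\,e_a e_a^{\mathsf{T}}$ when one of the two incident white regions is the deleted one), a perturbation of rank at most $1$ over any ring. Reducing mod $3$ and using $H_1(\Sigma_2(K);\F_3)\cong\operatorname{coker}(G\bmod 3)$, the $\F_3$-dimension of $H_1(\Sigma_2(K);\F_3)$, and therefore $\beta$, changes by at most $1$. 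For an H(2)-move I would invoke the analogous, known behaviour: an H(2)-move exhibits $\Sigma_2(K')$ as Dehn surgery along a single knot in $\Sigma_2(K)$ (equivalently, it modifies a presentation matrix of the Fox $3$-colouring module by a bounded operation), from which $\dim_{\F_3}H_1(\Sigma_2(\cdot);\F_3)$ again moves by at most $1$; here I would cite the literature relating H(2)-moves to the double branched cover (Kanenobu; Zhang--Yang--Lei).

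I expect the only real obstacle to be making this last step airtight for H(2)-moves: the crossing-change case is a two-line linear-algebra computation, whereas for an H(2)-move one must check that the surgery description (or the effect on the colouring presentation) actually forces a change of at most one---a crude Mayer--Vietoris estimate for surgery on a single knot only yields a bound of $2$, so the specific surgery slope produced by an H(2)-move has to be used. A minor secondary point is to confirm the loopless-quotient convention for these knot graphs (a crossing change performed in a non-reduced diagram can fix $\beta$), so that ``change by at most $1$'' together with ``no loops'' yields edges exactly between integers differing by $1$.
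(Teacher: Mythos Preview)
Your overall architecture is exactly the paper's: identify the vertex set via surjectivity of $\beta$ using $\#^m 3_1$, realize every edge $\{m,m+1\}$ by unknotting one trefoil summand (via a crossing change, respectively an H(2)-move), and rule out all other edges by a one-Lipschitz bound on $\beta$. The only substantive difference is how the Lipschitz bound is obtained. The paper does not argue via the Goeritz matrix or via surgery on $\Sigma_2(K)$; instead it invokes the Jones-polynomial evaluation at $e^{\pi i/3}$: citing Abe--Kanenobu and Miyazawa (Proposition~\ref{bound} in the paper), a single crossing change or H(2)-move forces $|V_K(e^{\pi i/3})/V_{K'}(e^{\pi i/3})|\in\{1,\sqrt3,1/\sqrt3\}$, and then $\beta=1+\log_3|V(e^{\pi i/3})|^2$ immediately yields $|\beta(K)-\beta(K')|\le 1$ for both operations at once. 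Your Goeritz rank-one argument for crossing changes is a nice, self-contained alternative that avoids the Jones machinery; the trade-off is that for H(2)-moves you are forced (as you note) into a surgery argument where the naive bound is $2$, so you must still appeal to outside work. The paper's route buys uniformity: one cited inequality handles both unknotting operations simultaneously.
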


In \cite[Theorem 1.4]{JLM}, the authors constructed various hyperbolic quotients of the Gordian graph and observed that these contrasted greatly with the usual Gordian and H(2)-Gordian graphs, which are not hyperbolic. All of the hyperbolic quotient graphs constructed by \cite{JLM} are isometric to a subset of $\mathbb{R}$ with the Euclidean metric and in particular, are graphs of infinite diameter. The quotient graphs with respect to $\beta$ in Theorem \ref{beta structure intro} also have this property. However, unlike with $\beta$, the quotient graphs with respect to the determinant and the span of the Jones polynomial are graphs of finite diameter (see the proofs of Theorems \ref{Gamma det} -- \ref{Gamma 2 span}). To our knowledge, these are the first examples of hyperbolic quotient knot graphs that do not arise from the compatibility condition, as defined in \cite{JLM}.

Let $\mathbb{K}_\infty$ denote the complete graph on a countably infinite set of vertices. Given the existence of induced subgraphs isomorphic to $\mathbb{K}_\infty$ at every vertex in the Gordian and H(2)-Gordian graphs \cite{HirasawaUchida, ZhangYang2018, ZhangYang}, it is natural to ask to what extent we may identify complete graphs in their quotients as well. In the case of the determinant we show:

\begin{theorem}
\label{complete subgraph intro}
In the quotient of both the Gordian and H(2)-Gordian graphs by the determinant, the subgraph induced by the set of vertices $\{[n^k], k\geq 0\}$ for all odd $n\geq3$ is isomorphic to $\mathbb{K}_{\infty}$.
\end{theorem}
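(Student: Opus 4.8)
The plan is to make the vertex set of the quotient concrete and then manufacture the required edges by connected sum. Write $\Gamma$ for the quotient of the Gordian graph by the determinant and $\Gamma_2$ for the quotient of the H(2)-Gordian graph by the determinant. Since the determinant of a knot is a positive odd integer and every positive odd integer is realized as a determinant (for instance by a twist knot), in each of $\Gamma$ and $\Gamma_2$ the assignment $[K]\mapsto\det(K)$ is a bijection from the vertex set onto the positive odd integers; under it the class $[n^k]$ is simply the class of all knots of determinant $n^k$. For a fixed odd $n\ge 3$ the integers $1=n^0,n^1,n^2,\dots$ are pairwise distinct, so $\{[n^k]:k\ge 0\}$ is a countably infinite vertex set in each graph, and it suffices to prove that for every $0\le j<k$ the classes $[n^j]$ and $[n^k]$ are adjacent --- that is, by the edge rule for quotient knot graphs (cf.\ \cite{JLM}), that there exist knots of determinant $n^j$ and of determinant $n^k$ related by a single crossing change, respectively by a single H(2)-move.

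For the Gordian case, recall that for each odd $d\ge 3$ there is a twist knot $T_d$ with $\det(T_d)=d$ and unknotting number $1$. Fix $0\le j<k$, let $J$ be any knot with $\det(J)=n^j$ (for definiteness, $j$ iterated connected sums of $T_n$, with $J$ the unknot when $j=0$), and set $K=T_{n^{k-j}}\#J$. Multiplicativity of the determinant under connected sum gives $\det(K)=n^{k-j}\cdot n^{j}=n^{k}$. Since $T_{n^{k-j}}$ has unknotting number $1$, a single crossing change performed in the $T_{n^{k-j}}$ summand of $K$ turns that summand into the unknot and hence turns $K$ into $J$; therefore $[n^j]$ and $[n^k]$ are joined by an edge of $\Gamma$. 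As $j<k$ were arbitrary, the subgraph of $\Gamma$ induced on $\{[n^k]:k\ge 0\}$ is complete, hence isomorphic to $\mathbb{K}_\infty$. (More generally the same argument gives $[a]\sim[b]$ whenever $a\mid b$ and $a\ne b$; the relevant feature here is that the powers of a fixed odd $n$ form a chain under divisibility.)

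The H(2)-Gordian case runs along the same lines, with crossing changes replaced by H(2)-moves and the unknotting number by the H(2)-unknotting number: granting, for each odd $d\ge 3$, a knot $S_d$ with $\det(S_d)=d$ that is carried to the unknot by a single H(2)-move, the knots $K=S_{n^{k-j}}\#S_n^{\#j}$ and $J=S_n^{\#j}$ have determinants $n^k$ and $n^j$ and are related by one H(2)-move (performed in the $S_{n^{k-j}}$ summand), so $[n^j]\sim[n^k]$ in $\Gamma_2$ for all $j<k$. The substantive point specific to this half is the existence of such $S_d$: an H(2)-move is a band surgery whose result must again be a knot rather than a two-component link, so one cannot simply reuse an unknotting-number-one knot, and one still needs to realize every odd value $n^{k-j}$ as a determinant this way. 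This can be supplied by $2$-bridge knots that bound a Möbius band with unknotted core (so that surgery along a suitable band in the band's complement returns the unknot), or, on the level of double branched covers, from the fact that every lens space $L(d,q)$ arises by a single Dehn surgery on the unknot together with the Montesinos-type correspondence between H(2)-moves and such surgeries. I expect this verification --- exhibiting, for every odd $d$, a determinant-$d$ knot of H(2)-unknotting number $1$ whose unknotting band surgery outputs a knot --- to be the main obstacle, the Gordian half then being routine.
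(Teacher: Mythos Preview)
Your approach matches the paper's: realize each vertex by a knot of the correct determinant, then produce edges via connected sums using multiplicativity of $\det$, unknotting one summand by the relevant move. The Gordian half is complete and coincides with the paper's argument (the paper takes your $T_d$ to be the twist knot $K(2,\tfrac{d-1}{2})$).

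The only gap is in the H(2)-Gordian half, where you leave the existence of $S_d$ as ``the main obstacle.'' It is not an obstacle: for every odd $d\ge 3$ the torus knot $T(2,d)$ has determinant $d$ and H(2)-unknotting number one---a single band move along the obvious annulus it bounds returns the unknot, and this move is component-preserving since $T(2,d)$ is a knot when $d$ is odd. The paper uses exactly $S_d=T(2,d)$. Your more elaborate proposals (2-bridge knots bounding M\"obius bands, the Montesinos trick) would also work---indeed $T(2,d)$ is itself such a 2-bridge knot---but the direct observation suffices. With this supplied, your proof is complete and identical to the paper's.
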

 
We also observe the existence of numerous other classes of edges in the quotient graphs with respect to the determinant (see Example \ref{edges with det}). In the case of the span of the Jones polynomial, we prove that the quotient of the Gordian graph is nearly isomorphic to $\mathbb{K}_{\infty}$ (see Proposition \ref{Gamma span structure} for a precise statement), but were unable to identify edges of the form $\{[n], [n+1]\}$ for all $n\in\mathbb{N}$. However, when we consider the quotient of the H(2)-Gordian graph \emph{of links} with respect to the span of the Jones polynomial, we find that the graph is indeed isomorphic to $\mathbb{K}_{\infty}$, the complete graph on infinitely many vertices.

\begin{theorem}
\label{Gamma2 span structure intro}
The quotient of the H(2)-Gordian graph of links by the span of the Jones polynomial is isomorphic to $\mathbb{K}_{\infty}$. 
\end{theorem}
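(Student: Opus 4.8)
The plan is to show that any two vertices in the quotient graph are joined by an edge, i.e. that for any two links $L_0, L_1$ with distinct spans of the Jones polynomial, there is a single $H(2)$-move taking some representative of the span-class of $L_0$ to some representative of the span-class of $L_1$. Since the $H(2)$-Gordian graph of links modulo span is by construction a graph whose vertices are the realized values of the span, the first step is to pin down exactly which nonnegative integers occur as $\mathrm{span}\, V_L$ for a link $L$; I expect all of them (or all but finitely many small ones) to occur, with the span equal to the number of vertices minus one, or some similarly explicit count, using alternating links (for which the span equals the crossing number of a reduced alternating diagram, by the Kauffman--Murasugi--Thistlethwaite theorem) or explicit families such as torus links or connected sums of Hopf links.

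Second, and this is the crux, I would realize an edge between any two span-values by a single $H(2)$-move. The key structural input is a Hirasawa--Uchida / Zhang--Yang style construction: near any diagram one can find a band move (an $H(2)$-move) that drastically alters the link type, and in particular one should be able to locate, in a single family of diagrams, links whose spans take every prescribed pair of values and that differ by one band move. Concretely, I would look for a ``universal'' move: fix a diagram $D$ of a link with large span realizing $L_0$'s class, and exhibit an $H(2)$-move (a band summing two components, or splitting a band) that yields a diagram $D'$ whose Jones polynomial span equals the target value — for instance a band move that converts a highly twisted two-component piece into a split or unknotted piece, collapsing the span to any desired smaller value, or conversely adding twists to inflate it. Because $H(2)$-moves can change the number of components by one, there is a lot of flexibility here that is unavailable for crossing changes, which is presumably why the link version works where the knot version (Proposition \ref{Gamma span structure}) does not.

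The third step is bookkeeping: verify that the resulting graph has countably infinitely many vertices (immediate, since spans are nonnegative integers and infinitely many are realized) and that the adjacency just established is symmetric and holds for every unordered pair, so that the graph is $\mathbb{K}_\infty$. One should also double-check the degenerate cases — the empty link / unknot and small spans — to confirm no vertex is isolated or missing.

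The main obstacle I anticipate is the second step: producing, uniformly in the two target span-values, an explicit pair of link diagrams realizing those spans and related by exactly one band move. Controlling the span of the Jones polynomial after a band move is delicate in general, so I would lean heavily on alternating (or adequate) links, where the span is computed combinatorially from the diagram, and choose the band move so that both the pre-move and post-move diagrams are reduced alternating (or adequate) with easily read-off crossing numbers. If a fully uniform family is hard to write down, a fallback is to fix one convenient ``hub'' link of each span value and connect an arbitrary pair through it — but since we want $\mathbb{K}_\infty$ rather than a weaker structure, the direct edge must be exhibited, so getting the band move to behave for all pairs simultaneously is where the real work lies.
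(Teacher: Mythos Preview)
Your high-level strategy---show that every nonnegative integer is realized as a span and then exhibit an explicit $H(2)$-move between representatives of every pair of span-classes---is exactly the paper's approach. But what you have written is an outline, not a proof: you correctly identify that ``getting the band move to behave for all pairs simultaneously is where the real work lies,'' and then you do not do that work. The paper's proof consists precisely of that work, and it is not achieved by a single ``universal'' construction of the Hirasawa--Uchida type you suggest.

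Concretely, the paper proceeds by a seven-case analysis. Small cases ($[0]$, $[1]$, $[2]$) are handled with torus links, unlinks, and connected sums. The edges $\{[n],[n+1]\}$ come from $T(2,n)$ and $T(2,n+1)$ or from twist knots. For $n>m\ge 3$ with $n-m>1$, the construction splits by parity: odd--odd uses $T(2,m)$ versus the generalized twist knot $K(m,n-m)$; mixed parity uses $K(2,m-2)$ versus $K(2,m-2)\#T(2,n-m)$. The genuinely delicate case is even--even, and here the paper does \emph{not} rely on alternating diagrams on both sides of the band move as you propose. Instead it introduces the tangle $T_q=([-1/3]+[1/q])*[1]$ and the knots $K_q=T_q^D$ and $K_{q,r}=(T_q+[r])^N$, computes their Kauffman brackets directly (Lemmas~\ref{bracket T}--\ref{knots Kqr}), and reads off that $\spn K_q=q+3$ and $\spn K_{q,r}=q+r+2$; these are related by a single band. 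The knot $K_{q,r}$ is not alternating, so your plan to keep both sides reduced alternating would not have produced this family, and controlling the span requires the explicit bracket calculation carried out in Section~\ref{sec:bracket}. Your fallback idea of routing through a hub would only give diameter~$2$, not $\mathbb{K}_\infty$, as you yourself note.
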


A more precise statement of the graph isomorphism types found is given in Section \ref{sec:graphs}. Theorem \ref{complete subgraph intro}, Proposition \ref{Gamma span structure} and \ref{Gamma2 span structure intro} are all proved by constructing edges in these graphs. This is accomplished in many cases with explicit calculations of the span of the Jones polynomial and other invariants for several special classes of knots. 

\textbf{Organization.} In section \ref{sec:knots}, we review knots, introduce quotients of knot graphs, and define the knot invariants that will be relevant to our discussion. Section \ref{sec:bracket} is devoted to the Kauffman bracket method for calculating the Jones polynomial, and we give explicit calculations for specific families of knots. In section \ref{sec:hyperbolic} we prove hyperbolicity for the knot graphs, and in section \ref{sec:graphs} we prove various results about the graph isomorphism type of the knot graphs.

\section{Knots and knot invariants}
\label{sec:knots}

A \emph{knot} is a smooth embedding of a circle in the three-sphere, where two knots are considered equivalent if they agree up to ambient isotopy. A link is a collection of knots, possibly linked together. In this article, we will not make a distinction between an oriented knot and its reverse orientation, but we do distinguish between a knot $K$ and its mirror image $-K$. Links are considered up to orientation, however for the particular quotient graphs that we construct, this choice of orientation will not turn out to matter. 

\subsection{The Gordian and H(2)-Gordian graphs and their quotients}
\label{subsec:knotgraphs}

Any knot can be transformed into the unknot by a finite sequence of crossing changes $\backoverslash \leftrightsquigarrow \slashoverback$. The unknotting number $u(K)$ is the minimum number of crossing changes to unknot a knot, minimized over all sequences of diagrams. The \emph{Gordian distance} $d(K, J)$ is the minimum number of crossing changes required to transform a knot $K$ into a knot $J$. Unknotting number is a basic, fundamental knot invariant, yet it is extremely difficult to calculate in general. An H(2)-move is another important operation on knots and links, originally defined in \cite{HNT}. In a diagram, an H(2)-move is realized by the tangle replacement $\smoothing \leftrightsquigarrow \hsmoothing$, or by the resolution of a crossing as a vertical or horizontal smoothing. As with crossing changes, any knot can be unknotted with a finite sequence of (component-preserving) H(2)-moves. The \emph{H(2)-Gordian distance}, $d_2(K, J)$, is the minimum number of component preserving H(2)-moves required to transform $K$ into $J$ through some finite sequence of knots. In the last section (and in some figures), we will consider H(2)-moves along links that are not component preserving. In particular, the H(2)-distance allowing for H(2)-moves that may change the number of link components (called SH(2)-moves by \cite{HNT}) will occur in this article in the statement of Theorem \ref{Gamma2 span structure intro}.

The following definitions are special cases of \emph{knot graphs}, as defined by Jabuka-Liu-Moore  \cite{JLM}.

\begin{definition} Let $\backoverslash$ denote the crossing change unknotting operation, and let $\smoothing$ denote an H(2)-unknotting operation, which is assumed to be component-preserving unless stated otherwise. Let $p(K)$ be an integer-valued knot invariant. 
\begin{enumerate}[label=(\roman*)]
	\item The Gordian knot graph $\mathcal{K}_{\backoverslash}$ is the graph whose vertices are isotopy classes of unoriented knots, in which a pair of vertices $[K], [K']$ span an edge if there exist a pair of knots $K$, $K'$ that possess diagrams related by a crossing change. Similarly, the H(2)-Gordian knot graph $\mathcal{K}_{\smoothing}$ is the graph whose vertices are isotopy classes of unoriented knots, and in which a pair of vertices $[K], [K']$ span an edge if there exist knots $K, K'$ related by a component-preserving H(2)-move.  
	\item The \emph{quotient graph} $\mathcal{QK}^p_{\backoverslash}$ is a graph whose vertices are equivalence classes $[K]^p_{\backoverslash}$ of knots $K$. A pair of knots $K$ and $L$ are equivalent if $p(K) = p(L)$, and two equivalence classes $[K]^p_{\backoverslash}$ and $[K']^p_{\backoverslash}$ span an edge if there exist knots $L$ and $L'$, equivalent to $K$ and $K'$ respectively, that possess diagrams related by an a crossing change. For H(2)-moves, the quotient graph $\mathcal{QK}^p_{\smoothing}$ is analogously defined. 
\end{enumerate}
\end{definition}

In all of the knot graphs defined above, there exist self-loops at all vertices. These can be realized by crossing changes at nugatory crossings or trivial H(2)-moves. For simplicity, we will ignore such self-loops. For the remainder of the article, we will assume that knot graphs are simple and undirected. 

\subsection{Special families of knots}

\begin{figure}
    \centering
	\begin{subfigure}[t]{.3\textwidth}
		\centering
		    \begin{tikzpicture}
            \node[anchor=south west,inner sep=0] at (0,0) {\includegraphics[width=1in]{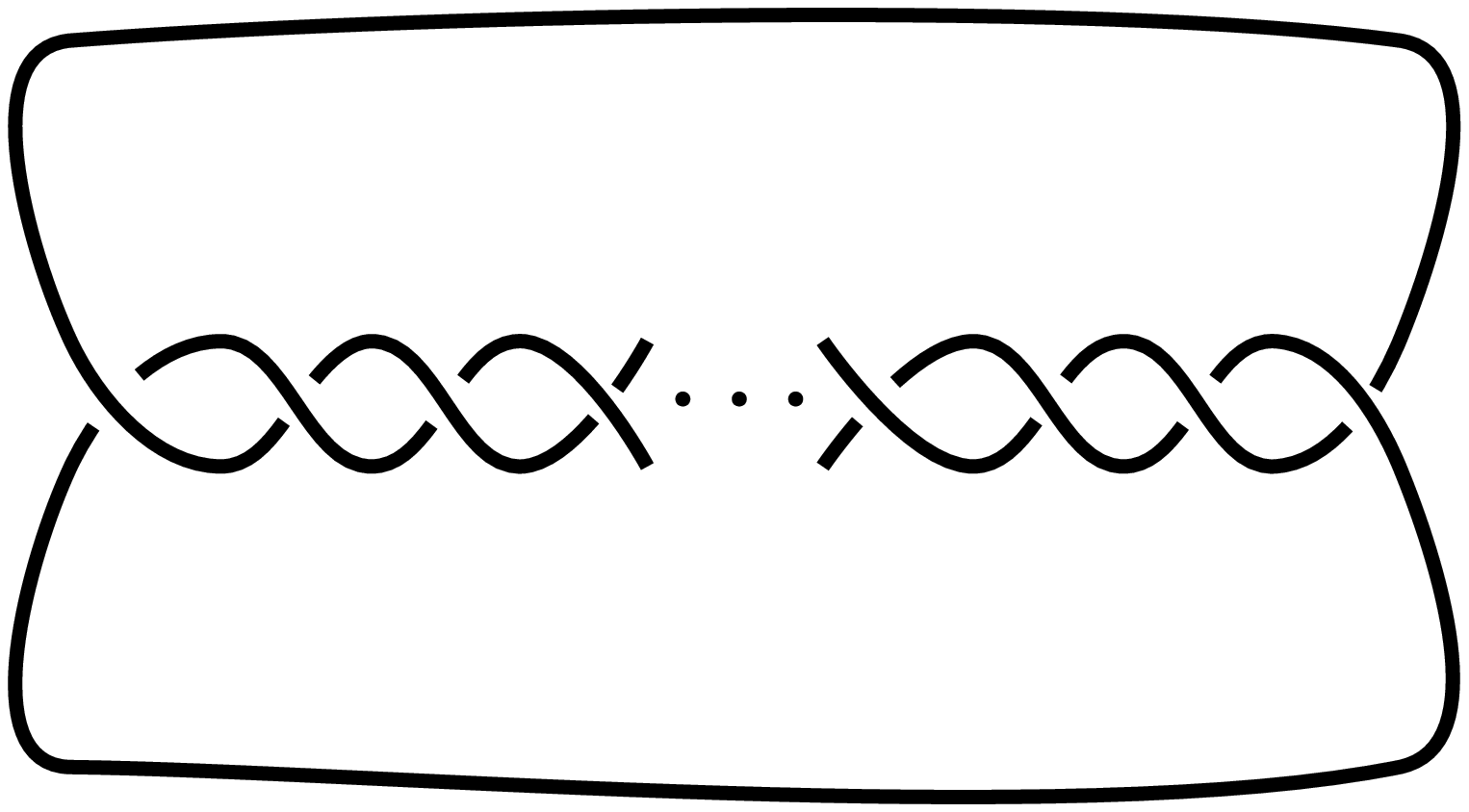}};
            \node[label=above right:{$n$}] at (1,0.75){};
            \end{tikzpicture}
		\caption{$T(2, n)$ torus knots and links}
		\label{fig:torus}
	\end{subfigure}
	\qquad
	\begin{subfigure}[t]{.3\textwidth}
		\centering
			\begin{tikzpicture}
            \node[anchor=south west,inner sep=0] at (0,0) {\includegraphics[width=1in]{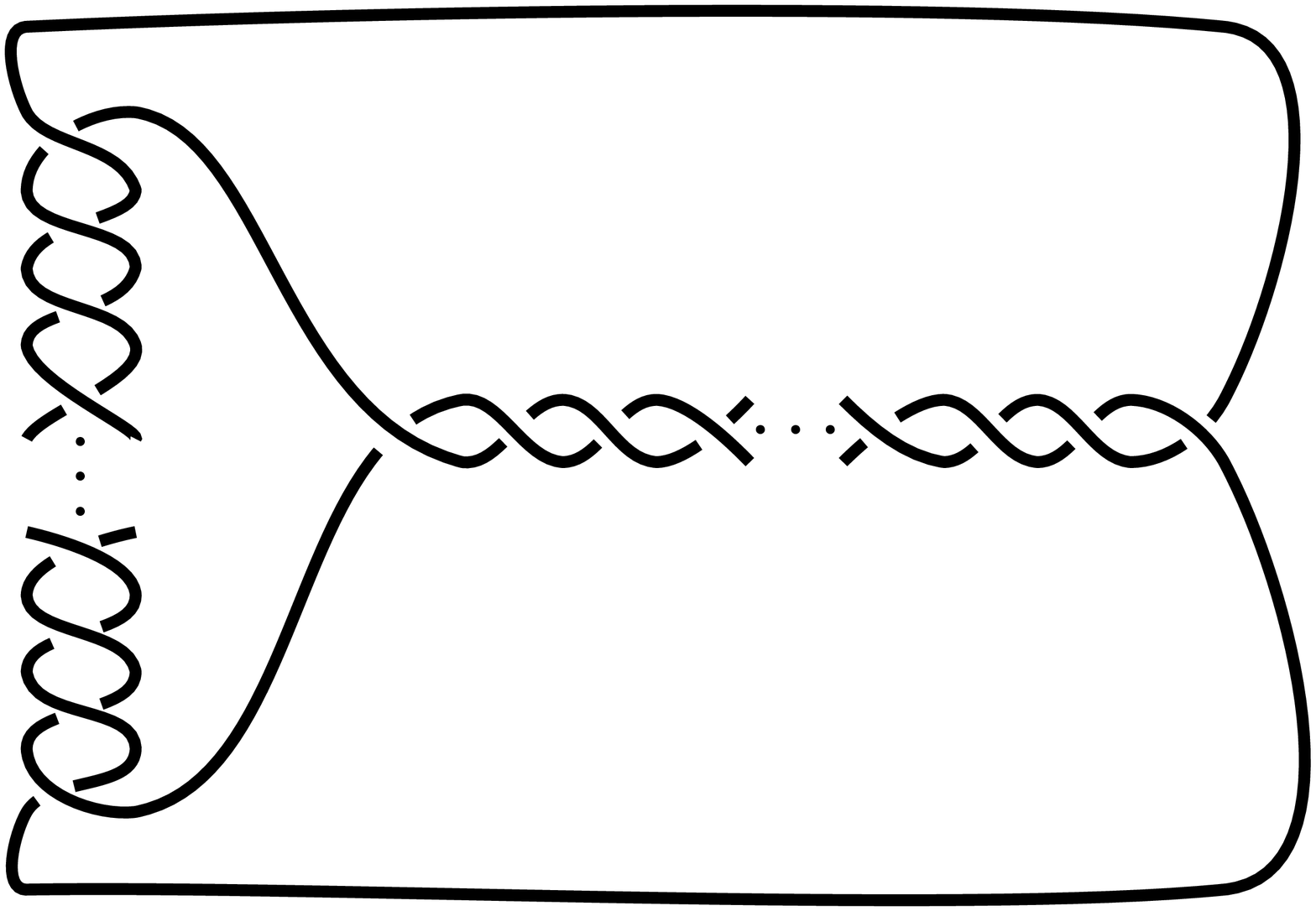}};
            \node[label=above right:{$n$}] at (1,1){};
            \node[label=above right:{$m$}] at (-0.7,1){};
            \end{tikzpicture}
    		\caption{Generalized twist knots $K(m, n)$.}
		\label{fig:twist}
	\end{subfigure}
	\qquad
	\begin{subfigure}[t]{.22\textwidth}
		\centering
			\begin{tikzpicture}
            \node[anchor=south west,inner sep=0] at (0,0) {\includegraphics[width=1in]{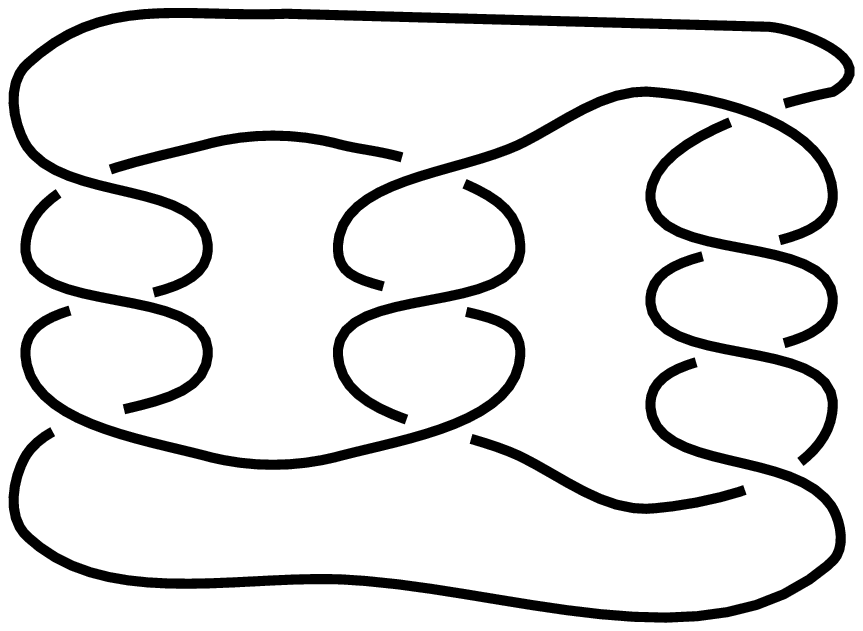}};
            \node[label=above right:{$p$}] at (0.1,0){};
            \node[label=above right:{$q$}] at (1,0){};
            \node[label=above right:{$r$}] at (1.9,-0.1){};
            \end{tikzpicture}
    		\caption{Pretzel knots $K(p, q, r)$.}
		\label{fig:pretzel}
	\end{subfigure}

	\caption{Knot diagrams of $T(2, n)$ torus links, generalized twist knots $K(m,n)$, and $K(p, q, r)$ pretzel knots.}
\label{fig:twist-torus-pretzel}	
\end{figure}

Several standard families of knots and links will be relevant for our arguments. The first of these are the $T(2, n)$ torus links. When $n$ is odd, $T(2, n)$ is a knot and when $n$ is even, $T(2, n)$ is a link of two components. The next are the generalized twist links $K(m,n)$. When either $m$ or $n$ or both $m$ and $n$ are even, $K(m,n)$ is a knot, and when both $m, n$ are odd, $K(m,n)$ is a link of two components. Both $T(2,n)$ torus links and generalized twist links are alternating. A third class of knots that we consider are pretzel knots $P(p, q, r)$. When at most one parameter is even, these are knots, otherwise they are links. See Figure \ref{fig:twist-torus-pretzel}.

It is well known that the twist knots $K(2, n)$ have unknotting number one. More generally, the connected sum of $K(2, n)$ and a knot $J$ is related to $J$ by a crossing change, so that $d(J \# K(2, n), J)=1$ for any knot $J$. Similarly, the $T(2, n)$ torus links are the simplest links that can be related to the unknot by a single H(2)-move, and likewise $d_2(J \# T(2, n), J)=1$ for any knot $J$. There are several other easily found H(2)-moves that relate the $T(2, n)$ torus links and generalized twist links. These moves are shown in Figure \ref{fig:twist-torus-operations}.

In order to prove Theorem \ref{Gamma2 span structure intro}, we will also need to construct two infinite families of knots with specific Jones polynomials. These are the classes of knots $K_q$, where $q\geq 1$ is odd, and $K_{q, r}$, where $q\geq 1$ and $r\geq 3$ are odd, as shown in Figure \ref{fig:specialknots}. These knots can be defined by tangle operations, which we explain in section \ref{sec:bracket}. 

\subsection{Knot invariants}
\label{knot invariants}
We will consider the quotient graphs $\mathcal{QK}^p_{\backoverslash}$ and  $\mathcal{QK}^p_{\smoothing}$ for several knot and link invariants that can be derived from the Jones polynomial. The Jones polynomial $V_L(t)$ is an invariant of oriented links that takes the form of a Laurent polynomial over the integers. If the link has an odd number of components, then $V_L(t)$ is in $\Z[t, t^{-1}]$, and if it has an even number of components, $\sqrt{t} \cdot V_L(t)$ is in $\Z[t, t^{-1}]$ \cite{Jones}. The span of the Jones polynomial, $\spn(L)$, is the difference between the highest and lowest exponents of the Jones polynomial. 

The determinant is an integer valued link invariant which can be obtained by evaluating the Jones polynomial at $t=-1$,
\[
	|V_L(-1)| = \det(L).
\]
For connected sums of knots, $\det(K\# K') = \det(K)\cdot \det(K')$. 
For knots, the determinant is always odd, whereas for links the determinant is even. Another invariant we will consider, $\beta(L)$, is an integer-valued knot invariant that can be obtained both from the tricolorability of the knot and with another evaluation of the Jones polynomial, as described below. Although the Jones polynomial is an invariant of oriented links that is sensitive to mirroring, for the case of knots, none of the invariants of $\spn, \det$, nor $\beta$ distinguish a knot from its reverse orientation nor from its mirror image.

\begin{figure}[t]
	\centering
	\begin{subfigure}[t]{.22\textwidth}
		\centering
		\includegraphics[width=1in]{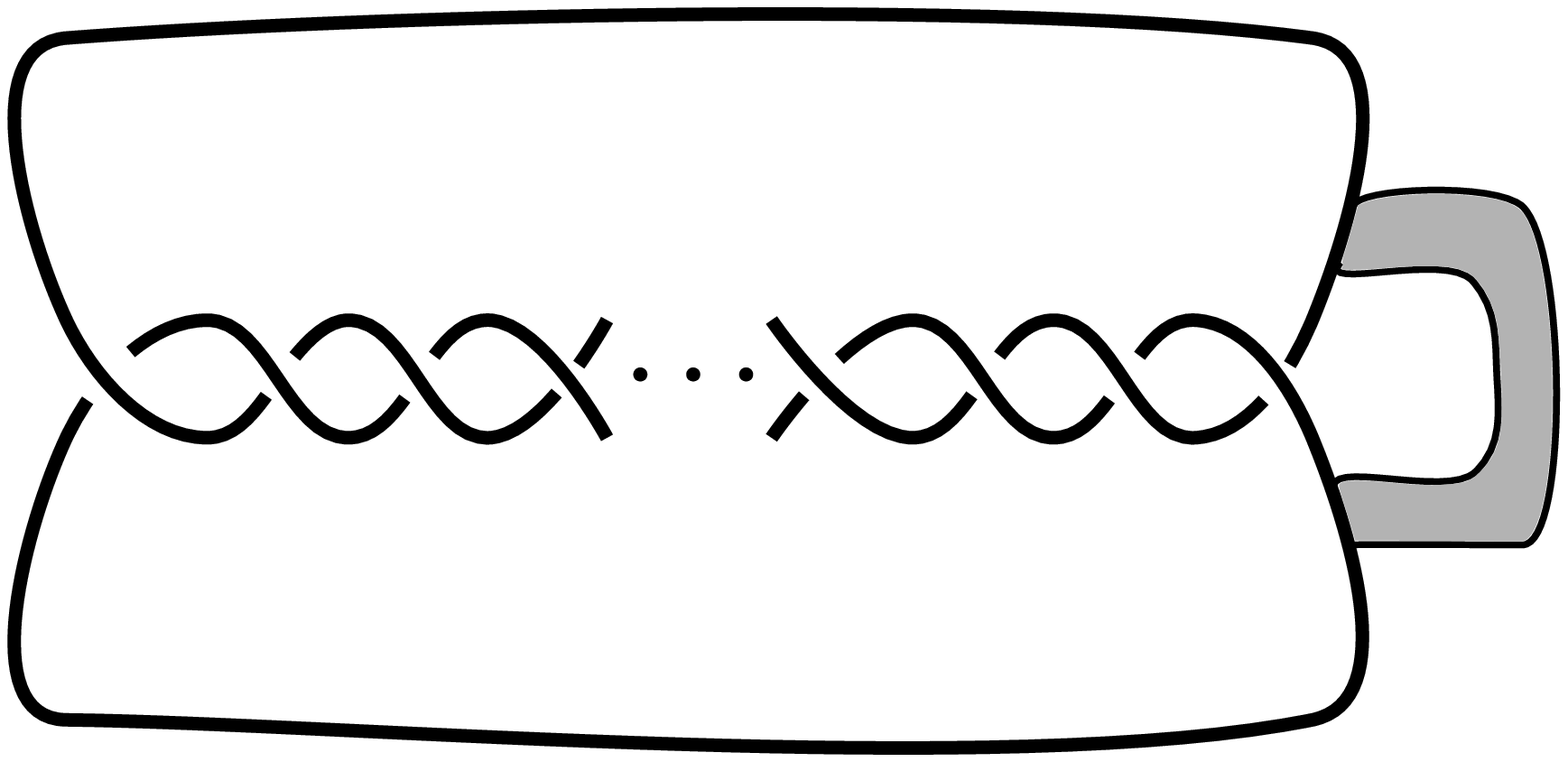}
		\caption{$d_2(T(2,n), U)=1$.}
		\label{fig:torus-operations}
	\end{subfigure}
	\qquad
	\begin{subfigure}[t]{.32\textwidth}
		\centering
		\includegraphics[width=1in]{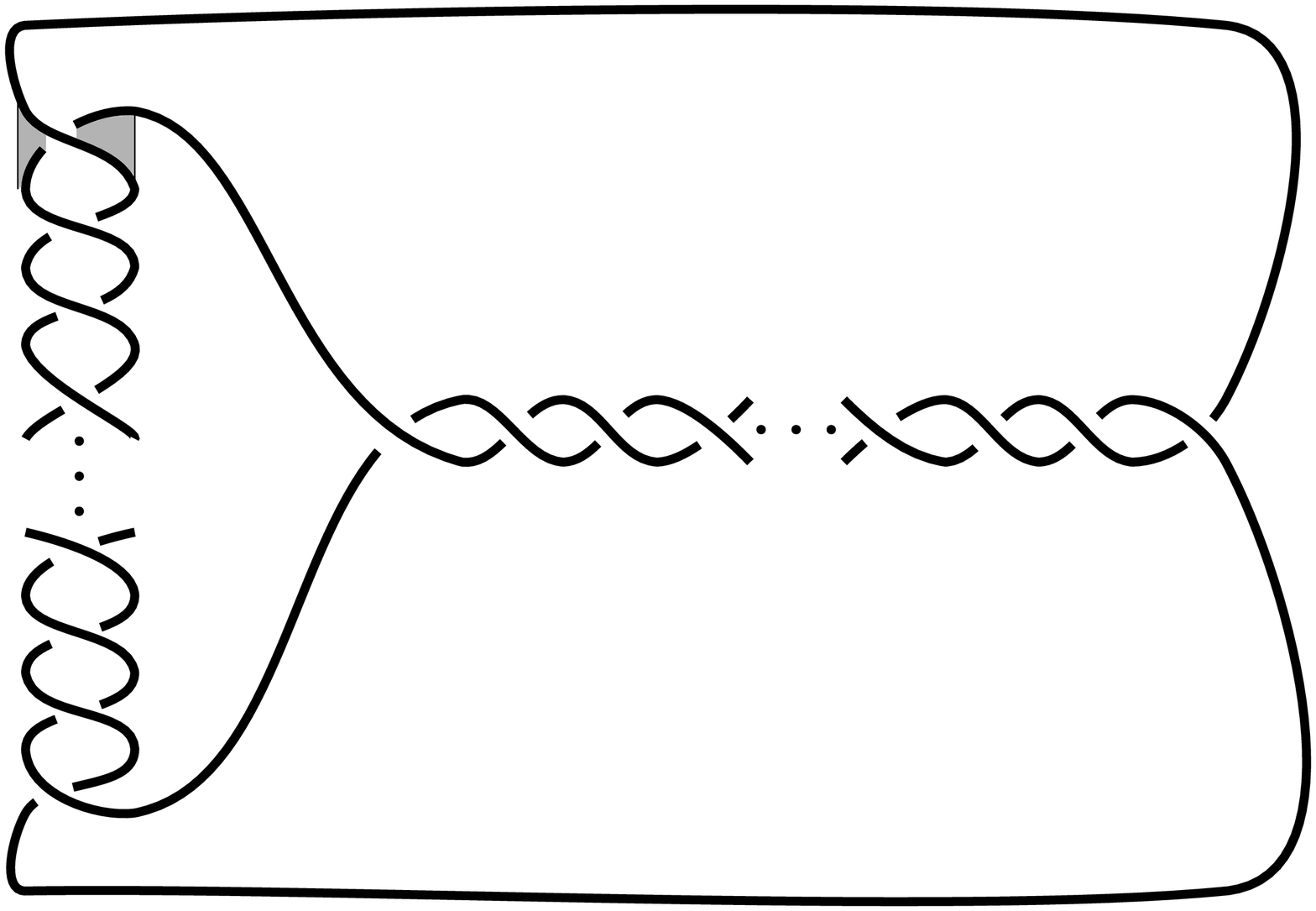}
		\caption{$d_2(K(m,n), K(m-1,n))=1$.}
		\label{fig:twist-operations}
	\end{subfigure}
	\qquad
	\begin{subfigure}[t]{.3\textwidth}
		\centering
		\includegraphics[width=1in]{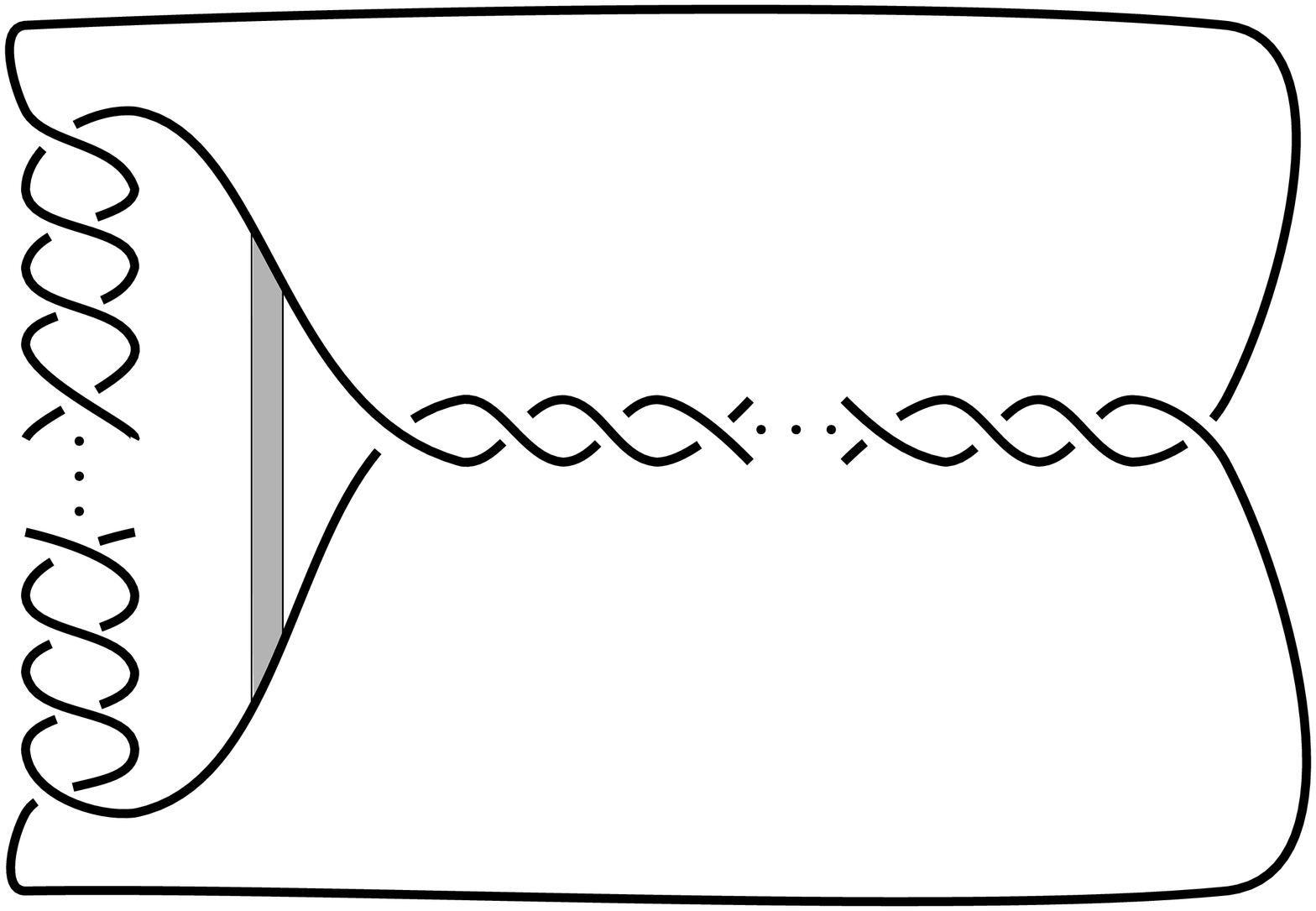}
		\caption{$d_2(K(m,n), T(2,m))=1$.}
		\label{fig:twist-operations}
	\end{subfigure}
\caption{Examples of H(2) moves relating generalized twist knots and torus knots.}\label{fig:twist-torus-operations}
\end{figure}

\begin{example}
The Jones polynomial of the $T(2, n)$ torus link is given (up to sign) by

\begin{equation}
\label{torus ex}
V_{T(2,n)}(t) =  t^{(n-1)/2} \left( \frac{ 1 + t + t^2 + (-1)^n t^{n+1}}{1+t} \right)  
= t^{(n-1)/2}  \left(t + 1 - t\sum_{k=0}^{n-1} {(-t)^{k}} \right).
\end{equation}
We verify this formula in Lemma \ref{torus jones} (see also \cite{Jones-note}). Here, we have assumed that when $n$ is even, the $T(2, n)$ torus link is oriented in the parallel manner, with both strands pointing up. From \eqref{torus ex}, it is immediately clear that $\det(T(2, n)) = |V_{T(2, n)}(-1)| = n$ and that the span is $n$. 
\end{example}

Another invariant we consider is related to tricolorability. The tricolorability of a link originates with Fox \cite{CF}; see also the definition in \cite{Adams}. The tricoloring number of $L$, $\tri(L)$, is the minimum number of proper, possibly trivial, tricolorings of a link. It can be obtained by evaluating the Jones polynomial at $t=e^{\pi i/3}$.
\begin{proposition}\cite[Theorem 1.13]{P2} 
The tricoloring number $\tri(L)$ is given by 
\begin{equation}
\label{tri equation}
	\tri(L) = 3|{V_{L}(e^{\frac{{\pi}i}{3}})^{2}}|. 
\end{equation}
\end{proposition}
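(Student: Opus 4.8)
The plan is to reduce the identity to two classical facts — one about Fox tricolorings, one about the Jones polynomial at $\omega := e^{\pi i/3}$ — and then combine them. First I would recall that, for any diagram $D$ of $L$ with arcs $\alpha_1,\dots,\alpha_m$, a tricoloring is an assignment of elements of $\F_3$ to the arcs so that at every crossing the overarc color $a$ and the two underarc colors $b,c$ satisfy $b+c\equiv 2a\pmod 3$; since this condition is $\F_3$-linear, the set $\mathrm{Col}_3(D)$ of tricolorings is an $\F_3$-subspace of $\F_3^m$ that always contains the $1$-dimensional subspace of monochromatic colorings, and a direct check of the Reidemeister moves shows that $\dim_{\F_3}\mathrm{Col}_3(D)$ is a link invariant, so $\tri(L)=|\mathrm{Col}_3(D)|=3^{\dim_{\F_3}\mathrm{Col}_3(D)}$ (in particular the minimization in the definition is vacuous). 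The first classical input is the identification of Fox colorings with the $\F_3$-homology of the double branched cover (see \cite{CF,P2}): there is an isomorphism $\mathrm{Col}_3(D)\cong\F_3\oplus H_1(\Sigma_2(L);\F_3)$, where $\Sigma_2(L)$ is the double cover of $S^3$ branched over $L$. Setting $d(L):=\dim_{\F_3}H_1(\Sigma_2(L);\F_3)$, this gives $\tri(L)=3\cdot 3^{\,d(L)}$, and the whole statement reduces to proving $|V_L(\omega)|^2=3^{\,d(L)}$.

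For that I would evaluate the Jones polynomial at $\omega$ by induction over link diagrams. Substituting $t=\omega$ into the oriented skein relation $t^{-1}V_{L_+}-tV_{L_-}=(t^{1/2}-t^{-1/2})V_{L_0}$ and using $t^{1/2}-t^{-1/2}=2i\sin(\pi/6)=i$ turns it into $\omega^{-1}V_{L_+}-\omega V_{L_-}=i\,V_{L_0}$. For the base case, the $n$-component unlink $U_n$ has $V_{U_n}(t)=(-t^{1/2}-t^{-1/2})^{n-1}$, so $V_{U_n}(\omega)=\big(-2\cos(\pi/6)\big)^{n-1}=(-\sqrt3)^{\,n-1}$ and $|V_{U_n}(\omega)|^2=3^{\,n-1}$; and since $\Sigma_2(U_n)=\#^{\,n-1}(S^1\times S^2)$, we have $d(U_n)=n-1$, so the two sides agree. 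For the inductive step I would follow the evaluation of Lickorish and Millett and carry through the sharper statement that $V_L(\omega)=\pm\,i^{\,\mu(L)-1}(i\sqrt3)^{\,d(L)}$, where $\mu(L)$ is the number of components: resolving one crossing of $D$ relates $L$ to the knot/link $L'$ obtained by a crossing change and to the oriented resolution $L_0$, whose double branched covers fit into a surgery triangle, so $\mu$ and $d$ change in a controlled fashion, and one then checks that each of the finitely many resulting cases is consistent with the displayed skein relation. Taking moduli gives $|V_L(\omega)|^2=3^{\,d(L)}$, and combining with the first step yields $\tri(L)=3\cdot 3^{\,d(L)}=3\,|V_L(\omega)|^2$.

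I expect the inductive evaluation of $V_L(\omega)$ to be the main obstacle. The difficulty is not any single computation but the simultaneous bookkeeping of how the $\F_3$-homology of double branched covers behaves under crossing changes and oriented resolutions alongside the skein relation, which is precisely what forces one to carry the refined ``$\pm\,i^{\mu-1}(i\sqrt3)^{d}$'' form through the induction rather than just the absolute value. An alternative would be to bypass branched covers entirely and argue combinatorially, by deriving local skein-type identities for $|\mathrm{Col}_3(\cdot)|$ under the two Kauffman smoothings of a crossing and matching them against the Jones skein relation; there the subtle point becomes reconciling the crossing-change triple $(L_+,L_-,L_0)$ natural to the Jones polynomial with the smoothing triple $(D,D_0,D_\infty)$ natural to $3$-colorings.
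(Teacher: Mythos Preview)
The paper does not prove this proposition; it is quoted as \cite[Theorem 1.13]{P2} without argument. Immediately afterwards the paper records, also without proof, the Lickorish--Millett evaluation \cite{LM}
\[
V_{L}(e^{\pi i/3}) = \pm\, i^{\,c(L)-1}(i\sqrt3)^{\,\dim H_1(\Sigma(L);\Z_3)},
\]
so in effect the paper simply assembles the two cited facts rather than deriving either.

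Your proposal is the standard and correct route to the identity: identify Fox $3$-colorings with $\F_3\oplus H_1(\Sigma_2(L);\F_3)$, then invoke the Lickorish--Millett formula to get $|V_L(e^{\pi i/3})|^2=3^{\dim H_1(\Sigma_2(L);\F_3)}$, and combine. Your diagnosis that the real content lives in the inductive proof of the refined form $V_L(\omega)=\pm\, i^{\mu-1}(i\sqrt3)^{d}$, with the surgery triangle controlling how $d$ changes under a skein triple, is exactly right and is precisely how \cite{LM} proceeds. Nothing in your outline is wrong; it just reproduces the proofs of the two results the paper is content to cite.
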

In \cite{LM}, Lickorish and Millet previously showed that this evaluation of the Jones polynomimal is related to the dimension of the first homology group of the branched double cover $\Sigma(L)$ of $L$ with coefficients in $\Z_3$. Let $c(L)$ be the number of link components. Then,
\begin{equation}
\label{dim equation}
	V_{L}(e^{\frac{{\pi}i}{3}}) = \pm i^{c(L)-1}(i\sqrt{3})^{\dim H_1(\Sigma(L);\Z_3)}.
\end{equation}

\begin{example}
\label{tri trefoil}
The tricoloring number of the unknot is $\tri(U)=3$. For the trefoil knot $T(2, 3)$, the branched double cover is the lens space $L(3, 1)$, which has $H_1(\Sigma(L);\Z_3) =\Z_3$, and so $\dim H_1(\Sigma(L);\Z_3)=1$. By equations \eqref{tri equation} and \eqref{dim equation}, or by exhaustive enumeration, $\tri(T(2, 3)) = 3|(\pm (i\sqrt{3}))^2| = 9$.
\end{example}

The following properties of $\tri$ will also be useful:

\begin{lemma}\cite[Lemma 1.4, 1.5]{P2}
\label{tri properties}
\begin{enumerate}[label=(\roman*)]
	\item \label{power of three} $\tri(L) = 3^\beta$ for some $\beta\geq 1$.
	\item \label{connected sum} $\tri(L_1)\tri(L_2) = 3\tri(L_1\# L_2)$.
\end{enumerate}
\end{lemma}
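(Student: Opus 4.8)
The plan is to derive both parts from a single closed formula for $\tri$ in terms of the branched double cover. First I would combine \eqref{tri equation} with \eqref{dim equation}: substituting the latter into the former and taking absolute values gives
\[
\tri(L) \;=\; 3\,\bigl|\bigl(\pm i^{\,c(L)-1}(i\sqrt3)^{\dim H_1(\Sigma(L);\Z_3)}\bigr)^{2}\bigr| \;=\; 3\cdot\bigl|i^{\,2(c(L)-1)}\bigr|\cdot\bigl|(-3)^{\dim H_1(\Sigma(L);\Z_3)}\bigr| \;=\; 3^{\,1+\dim H_1(\Sigma(L);\Z_3)},
\]
using $\bigl|i^{\,2(c(L)-1)}\bigr| = 1$ and $|(i\sqrt3)^2| = 3$. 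Part \ref{power of three} is then immediate, the exponent of the statement being $\beta(L) = 1 + \dim H_1(\Sigma(L);\Z_3) \ge 1$, which is finite since $\Sigma(L)$ is a closed $3$-manifold.

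For part \ref{connected sum} I would invoke two standard facts: the branched double cover of a connected sum of links is the connected sum of the branched double covers, $\Sigma(L_1\#L_2) \cong \Sigma(L_1)\#\Sigma(L_2)$, and first homology is additive under connected sum of closed $3$-manifolds, so that $\dim H_1(\Sigma(L_1\#L_2);\Z_3) = \dim H_1(\Sigma(L_1);\Z_3) + \dim H_1(\Sigma(L_2);\Z_3)$. Feeding this into the formula above yields $\tri(L_1\#L_2) = 3^{\,1+\dim H_1(\Sigma(L_1);\Z_3)+\dim H_1(\Sigma(L_2);\Z_3)}$, whereas $\tri(L_1)\tri(L_2) = 3^{\,2+\dim H_1(\Sigma(L_1);\Z_3)+\dim H_1(\Sigma(L_2);\Z_3)}$, so the two differ exactly by the claimed factor of $3$.

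Alternatively, and perhaps more in the spirit of the definition of $\tri$, one can argue directly with Fox $3$-colorings. The tricolorings of a diagram $D$ of $L$ form the kernel of the $\Z_3$ coloring matrix, hence a $\Z_3$-vector space whose order $\tri(L)$ is a power of $3$; the constant colorings span a $1$-dimensional subspace, which gives $\beta \ge 1$ and part \ref{power of three}. For part \ref{connected sum}, build a connected-sum diagram $D$ of $L_1\#L_2$ from diagrams $D_1, D_2$ by splicing along a chosen arc of each; a coloring of $D$ is the same datum as a pair of colorings of $D_1$ and $D_2$ that agree on the spliced arc, i.e.\ an element of the fiber product of the two coloring spaces over their evaluation maps to $\Z_3$, which are surjective because of the constant colorings. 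Hence $\dim \mathrm{Col}_3(D) = \dim \mathrm{Col}_3(D_1) + \dim \mathrm{Col}_3(D_2) - 1$, which exponentiates to part \ref{connected sum}. In either route the computation is short; the step I expect to require the most care is the bookkeeping — checking in the first approach that the powers of $i$ and the absolute values collapse exactly to a power of $3$, and in the second that splicing imposes precisely the single linear relation that the spliced arc receive equal colors and nothing more.
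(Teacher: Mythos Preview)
The paper does not actually prove this lemma: it is quoted with citation \cite[Lemma 1.4, 1.5]{P2} and no proof environment follows, so there is no in-paper argument to compare against. Your proposal is correct and supplies what the paper omits. Both of your routes are standard and valid: the first derives the closed formula $\tri(L)=3^{1+\dim H_1(\Sigma(L);\Z_3)}$ directly from equations \eqref{tri equation} and \eqref{dim equation} already stated in the paper, and then uses additivity of $H_1$ under connected sum of the branched double covers; the second argues combinatorially with Fox $3$-colorings and the fiber-product description of colorings of a connected-sum diagram. Either would be acceptable as a self-contained proof here, and the first has the advantage of using only facts the paper has already recorded.
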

Notice that Property \eqref{power of three} allows us to consider the positive integer-valued knot invariant $\beta$, which by definition is 
\begin{equation*}
	\beta(L) = \log_3(\tri(L)) = 1+ \log_3 |{V_{L}(e^{\frac{{\pi}i}{3}}})^2|.
\end{equation*} 
Notice that for a knot $K$, Eq. \eqref{dim equation} gives $\beta(L) = 1+ \dim  H_1(\Sigma(L);\Z_3)$. By the following result, the tricoloring number is known to give a lower bound on both the Gordian distance and the H(2)-Gordian distance. 
\begin{proposition}\cite[Theorem 5.5]{AK}, \cite[Proposition 4.2]{Miyazawa}
\label{bound}
	Let $K$ and $K'$ be a pair of knots related by a single H(2)-move or a crossing change. Then 
	\[
		\left| V_K(e^{\frac{{\pi}i}{3}}) / V_{K'}(e^{\frac{{\pi}i}{3}}) \right| \in \lbrace 1, \sqrt{3}, 1/\sqrt{3} \rbrace 
	\]
\end{proposition}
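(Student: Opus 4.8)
The plan is to reduce the statement to a standard fact about how Dehn surgery affects the homology of the branched double cover. Write $d(K)=\dim H_1(\Sigma(K);\Z_3)$. Since $K$ and $K'$ are knots, $c(K)=c(K')=1$, so equation \eqref{dim equation} gives $V_K(e^{\pi i/3})=\pm(i\sqrt3)^{d(K)}$ and likewise for $K'$. Hence $\bigl|V_K(e^{\pi i/3})/V_{K'}(e^{\pi i/3})\bigr|=3^{(d(K)-d(K'))/2}$, and the proposition is equivalent to the assertion $|d(K)-d(K')|\le 1$.

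Next I would realize both moves as a tangle replacement inside a ball and apply the Montesinos trick. A crossing change and a (component-preserving) H(2)-move each take place inside a $3$-ball $B\subset S^3$: the crossing change swaps the two $\pm 1$ rational tangles, while the H(2)-move swaps the $0$-tangle and the $\infty$-tangle. In every case the two tangles involved both have double branched cover a solid torus, whereas the exterior $(S^3\setminus\mathrm{int}\,B,\ K\cap(S^3\setminus\mathrm{int}\,B))$ is the same for $K$ and for $K'$, and its double branched cover is a compact orientable $3$-manifold $Y$ with $\partial Y=T^2$ (the double cover of $S^2$ branched over $4$ points being a torus). Consequently $\Sigma(K)$ and $\Sigma(K')$ are two Dehn fillings $Y(\alpha)$ and $Y(\alpha')$ of the same $Y$; equivalently, $\Sigma(K')$ is obtained from $\Sigma(K)$ by a Dehn surgery on a knot.

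Finally I would prove the homology bound: if $Y$ is a compact orientable $3$-manifold with $\partial Y=T^2$ and $\F$ is any field, then any two Dehn fillings satisfy $\bigl|\dim H_1(Y(\alpha);\F)-\dim H_1(Y(\beta);\F)\bigr|\le 1$. Indeed, for $Y(\gamma)=Y\cup_{T^2}V_\gamma$ the Mayer--Vietoris sequence, together with $H_1(V_\gamma;\F)\cong\F$ generated by the core and the fact that a dual curve of $\gamma$ on $T^2$ maps onto this generator, yields $H_1(Y(\gamma);\F)\cong H_1(Y;\F)/\langle i_*\gamma\rangle$, where $i_*\colon H_1(T^2;\F)\to H_1(Y;\F)$. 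Thus $\dim H_1(Y(\gamma);\F)=\dim H_1(Y;\F)-\varepsilon(\gamma)$ with $\varepsilon(\gamma)\in\{0,1\}$ recording whether $i_*\gamma\ne 0$, and the two fillings differ in dimension by $\varepsilon(\beta)-\varepsilon(\alpha)\in\{-1,0,1\}$. Taking $\F=\Z_3$ and the two slopes produced in the previous step gives $|d(K)-d(K')|\le 1$, which is what was needed.

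The main obstacle is the middle step: making the Montesinos-trick identification precise, i.e.\ verifying that the two tangles occurring in each move genuinely have solid-torus double branched covers and that the exterior piece has connected torus boundary, so that $\Sigma(K)$ and $\Sigma(K')$ really are related by surgery on a knot. (For a crossing change one could instead compare Goeritz or Seifert linking forms directly, but the branched-cover argument treats the crossing change and the H(2)-move uniformly.) The remaining steps are routine: the first is bookkeeping with \eqref{dim equation}, and the last is a short Mayer--Vietoris computation. One should also note that the bound is sharp---the unknot and the trefoil differ by a crossing change and have $d=0$ and $d=1$ respectively by Example \ref{tri trefoil}, realizing the ratio $\sqrt3$.
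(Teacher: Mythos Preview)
The paper does not give its own proof of this proposition; it is quoted from \cite{AK} and \cite{Miyazawa} (with a related statement attributed to \cite{P2}) and used as a black box. Your argument is a correct, self-contained proof.

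Your route is different from the skein-theoretic arguments typically used in the cited references. You first translate the statement, via \eqref{dim equation}, into the purely homological inequality $|\dim H_1(\Sigma(K);\Z_3)-\dim H_1(\Sigma(K');\Z_3)|\le 1$, and then prove that by the Montesinos trick: both a crossing change and an H(2)-move replace one rational tangle by another inside a fixed ball, so $\Sigma(K)$ and $\Sigma(K')$ are two Dehn fillings of the same manifold $Y$ with a single torus boundary component. The Mayer--Vietoris step giving $H_1(Y(\gamma);\F)\cong H_1(Y;\F)/\langle i_*\gamma\rangle$ is correct as written, and yields the desired bound. What your approach buys is uniformity and generality: the same proof works verbatim for any field $\F$ (not just $\Z_3$) and for \emph{any} rational tangle replacement, not only the four tangles $\pm 1,0,\infty$ appearing here. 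The Jones-polynomial approaches in the cited sources are purely algebraic and avoid $3$-manifold topology, but they must handle the crossing change and the H(2)-move by separate skein identities. The ``main obstacle'' you flag---checking that the inside tangles have solid-torus double branched covers and that $\partial Y\cong T^2$---is standard: every rational tangle has solid-torus branched double cover, and the double cover of $S^2$ branched over four points is a torus.
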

A similar statement is proven in \cite[Lemma 1.5]{P2}. We can repackage Proposition \ref{bound} as a lower bound on Gordian and H(2)-Gordian distance in terms of $\beta$ as follows. 
\begin{proposition}
\label{beta bound}
Let $K$ and $K'$ be a pair of knots. Then
\[
	|\beta(K)-\beta(K')| \leq d(K, K') \text{ and  }
	|\beta(K)-\beta(K')| \leq d_2(K, K'). 
\]
\end{proposition}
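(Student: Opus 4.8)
The plan is to reduce the statement to the single-move case, which is exactly the content of Proposition \ref{bound}, and then chain the resulting estimates along a minimal sequence of moves. First I would record the elementary identity relating $\beta$ to the modulus of the special evaluation of the Jones polynomial. Since $\beta(L) = 1 + \log_3\lvert V_L(e^{\pi i/3})^2\rvert = 1 + \log_3\lvert V_L(e^{\pi i/3})\rvert^2$, for any two knots $K$ and $K'$ one has
\[
	\beta(K) - \beta(K') = \log_3\frac{\lvert V_K(e^{\pi i/3})\rvert^2}{\lvert V_{K'}(e^{\pi i/3})\rvert^2} = 2\log_3\left\lvert\frac{V_K(e^{\pi i/3})}{V_{K'}(e^{\pi i/3})}\right\rvert.
\]

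Next I would apply this identity to a pair of knots $K, K'$ related by a single crossing change or a single component-preserving H(2)-move. By Proposition \ref{bound}, the ratio $\lvert V_K(e^{\pi i/3})/V_{K'}(e^{\pi i/3})\rvert$ lies in $\{1, \sqrt{3}, 1/\sqrt{3}\}$, so its base-$3$ logarithm lies in $\{0, \tfrac12, -\tfrac12\}$, and hence $\beta(K) - \beta(K') \in \{-1, 0, 1\}$. In particular $\lvert\beta(K) - \beta(K')\rvert \le 1$ whenever $K$ and $K'$ differ by a single move of either type.

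Finally, for arbitrary knots $K$ and $K'$, fix a shortest sequence $K = K_0, K_1, \dots, K_d = K'$ of knots in which consecutive terms are related by a single crossing change, so that $d = d(K, K')$; the argument for $d_2$ is identical, using component-preserving H(2)-moves. Since crossing changes and component-preserving H(2)-moves do not change the number of link components, each $K_i$ is a knot, so $\beta(K_i)$ is defined, and the single-move bound applies at every step. The triangle inequality then gives
\[
	\lvert\beta(K) - \beta(K')\rvert \le \sum_{i=0}^{d-1}\lvert\beta(K_i) - \beta(K_{i+1})\rvert \le d = d(K, K'),
\]
and likewise $\lvert\beta(K) - \beta(K')\rvert \le d_2(K, K')$, as claimed.

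I do not anticipate a genuine obstacle: the only point requiring care is the observation just used, namely that every knot appearing in a minimal crossing-change or component-preserving H(2)-move sequence is actually a knot, so that $\beta$ (and therefore Proposition \ref{bound}) is legitimately available at each stage of the telescoping sum.
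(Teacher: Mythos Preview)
Your argument is correct and follows essentially the same route as the paper's proof: reduce to the single-move case via Proposition \ref{bound}, translate the ratio constraint into $\lvert\beta(K)-\beta(K')\rvert\le 1$, and then telescope along a minimal sequence of moves. You are slightly more explicit about the telescoping step and the fact that all intermediate diagrams remain knots, but the approach is the same.
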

\begin{proof}
Suppose that $K$ and $K'$ are related by a single crossing change. Then Proposition \ref{bound} implies that 
	\[
		\left| V_K(e^{\frac{{\pi}i}{3}})^2 / V_{K'}(e^{\frac{{\pi}i}{3}})^2 \right| \in \lbrace 1, 3, 1/3 \rbrace,
	\]
and so $|\tri(K)/\tri(K')| \in \{1,3,1/3\}$. Applying log base 3, we equivalently have that $|\beta(K)-\beta(K')|\leq 1$. Thus for any pair of knots $K, K'$, $|\beta(K)-\beta(K')| \leq d(K, K')$. The proof for $d_2(K, K')$ is identical.
\end{proof}


\section{The Kauffman bracket and Jones polynomial}
\label{sec:bracket}

\begin{figure}
\centering
\begin{tikzpicture}
\node[anchor=south west,inner sep=0] at (0,0) {\includegraphics[width=4in]{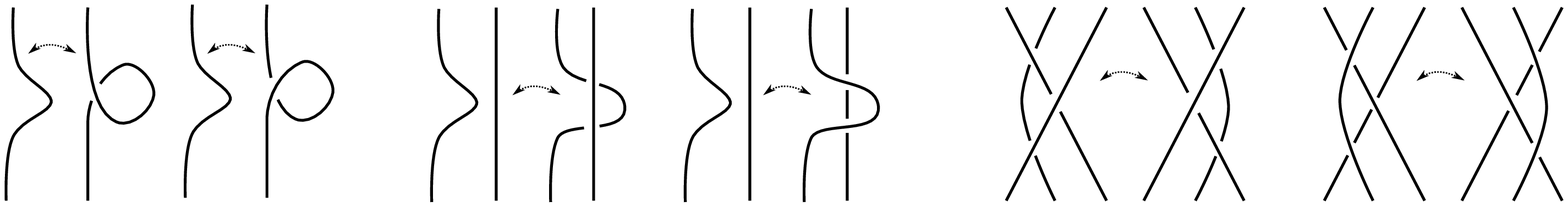}};
\node[label=above right:{Type I}] at (0, -0.6){};
\node[label=above right:{Type II}] at (3, -0.6){};
\node[label=above right:{Type III}] at (7, -0.6){};
\end{tikzpicture}
	\caption{Type I, Type II, and Type III Reidemeister moves.}
	\label{fig:reidemiester}
\end{figure}

In this section, we will review the definition of the Kauffman bracket \cite{Kauffman} and state some of its properties, most of which can be found in \cite{Kauffman} and \cite{EKT}. We then use the bracket to calculate the span of the Jones polynomial for several families of knots in a series of lemmas at the end of the section. 

The \emph{Kauffman bracket} $\langle L \rangle \in \Z[a, a^{-1}]$ of an unoriented link diagram $D_L$ is a Laurent polynomial defined by the following axioms:
\begin{enumerate}[label=(\roman*)]
	\item $\langle \bigcirc \sqcup D_L \rangle = \delta\langle D_L \rangle$, where $\delta =  (-a^2 -a^{-2})$,
	\item $\langle \backoverslash \rangle = a \hb + a^{-1} \vb$,
	\item $\langle \bigcirc \rangle = 1$.
\end{enumerate}
Recall that the bracket is not an invariant of links because it fails to be invariant under a Type I Reidemeister move (see Figure \ref{fig:reidemiester}). 
The deficiency of the bracket is corrected by a multiplicative factor that records the writhe $w(D_L)$ of the diagram. 
This yields the polynomial
\begin{equation*}
\label{normalization}
	X_L(a) = (-a)^{-3w(D_L)} \langle D_L \rangle
\end{equation*}
which is indeed a topological invariant of the link $L$. 
The polynomial $X_L(a)$ is equivalent to the Jones polynomial $V_L(t)$ after the change of variable $t = a^{-4}$ \cite{Jones, Kauffman}. 

\begin{figure}
\centering
\begin{tikzpicture}
\node[anchor=south west,inner sep=0] at (0,0) {\includegraphics[width=4in]{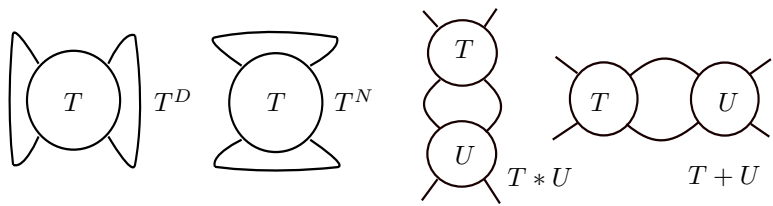}};
\node[label=above right:{$T^D$}] at (1.7, 1){};
\node[label=above right:{$T$}] at (0.5, 1){};
\node[label=above right:{$T^N$}] at (4.1, 1){};
\node[label=above right:{$T$}] at (3.2, 1){};
\node[label=above right:{$T*U$}] at (6.4, 0){};
\node[label=above right:{$T$}] at (5.7, 1.7){};
\node[label=above right:{$U$}] at (5.7, 0.3){};
\node[label=above right:{$T+U$}] at (8.8, 0){};
\node[label=above right:{$T$}] at (7.5, 1){};
\node[label=above right:{$U$}] at (9.2, 1){};
\end{tikzpicture}
	\caption{Denominator closure, numerator closure, vertical tangle product and horizontal tangle sum.}
	\label{tangle stuff}
\end{figure}

Let $T$ be a two-string tangle diagram. There are two standard closures of $T$, called the numerator and denominator closures $T^N$ and $T^D$. We denote the horizontal tangle sum by $T+U$, and the vertical tangle product by $T*U$. These operations are shown in Figure  \ref{tangle stuff}. The mirror of a tangle, obtained by changing all of the crossings, will be denoted $-T$. The zero tangle is $[0]=\hsmoothing$, and the infinity tangle is $[\infty]=\smoothing$. 

By applying the axioms (i) and (ii), we can write any tangle as an element in the bracket skein module of a disk with four marked points with basis $\{[0], [\infty] \}$, i.e. as a linear combination
\begin{equation*}
\label{kauffman poly}
	\langle T \rangle = f_T [0] + g_T [\infty],
\end{equation*}
where $f_T, g_T$ are polynomials in the ring $\Z[a, a^{-1}]$. The bracket vector $br(T)$ of the tangle $T$ is defined as $br(T)=[f_T, g_T]^T$, where here the superscript $T$ denotes the transpose. The following properties are well known and easy to verify (see for example, \cite{EKT}).
\begin{proposition}\cite[Proposition 2.2]{EKT}
\label{bracket properties}
\begin{enumerate}[label=(\roman*)]
	\item\label{bp 1} $\begin{bmatrix}\langle T^N \rangle \\ \langle T^D \rangle \end{bmatrix}  =\begin{bmatrix}\delta & 1 \\ 1 & \delta \end{bmatrix} br(T)$.
	\item\label{bp 2} $br(T+U) =  \begin{bmatrix} f_U & 0 \\ g_U & f_U + \delta g_U \end{bmatrix} br(T)$.
	\item\label{bp 3} $br(T*U) = \begin{bmatrix} \delta f_U + g_U & f_U \\ 0 & g_U \end{bmatrix} br(T)$.
\end{enumerate}
\end{proposition}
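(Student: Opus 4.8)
My plan is to reduce all three identities to the case where $T$ is one of the two basis tangles $[0]$ and $[\infty]$, and then to check those cases by drawing the resulting diagrams. The starting point is the fact that makes $br(T)$ well defined: the Kauffman bracket skein module of a disk with four marked boundary points is free of rank two over $\Z[a,a^{-1}]$ with basis $\{[0],[\infty]\}$. That $[0]$ and $[\infty]$ span it is just the reduction procedure behind the bracket --- axiom (ii) removes every crossing from a tangle diagram, axiom (i) absorbs each resulting closed curve into a factor $\delta$, and up to isotopy rel boundary the only crossingless matchings of four points are $[0]$ and $[\infty]$ --- and their linear independence over $\Z[a,a^{-1}]$, which is standard, is what makes the coefficients $f_T$ and $g_T$ in $\langle T\rangle = f_T[0]+g_T[\infty]$ unique. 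The second ingredient is that each of the operations $T\mapsto T^{N}$, $T\mapsto T^{D}$, $T\mapsto T+U$, $T\mapsto T*U$ is carried out by gluing arcs (or the fixed tangle $U$) onto the boundary of the disk containing $T$, never disturbing its interior; by locality of the skein relations, each such operation descends to a $\Z[a,a^{-1}]$-linear map between the relevant skein modules. Hence every entry of the matrices in (i)--(iii) is $\Z[a,a^{-1}]$-linear in $(f_T,g_T)$, and it suffices to verify each identity for $T=[0]$ and $T=[\infty]$.

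For part (i), I would first note that rotating the disk by $90^{\circ}$ interchanges $[0]\leftrightarrow[\infty]$ and simultaneously swaps the numerator and denominator closures; therefore the $2\times 2$ array of values $\langle T^{N}\rangle, \langle T^{D}\rangle$ for $T\in\{[0],[\infty]\}$ is symmetric, and its entries lie in $\{1,\delta\}$, since each of $[0]^{N}, [0]^{D}, [\infty]^{N}, [\infty]^{D}$ is a disjoint union of one or two trivially embedded circles and a union of $k$ trivial circles has bracket $\delta^{k-1}$ by axioms (i) and (iii). It then remains only to compute one of the four: $[0]^{N}$ is visibly a two-component unlink, so $\langle [0]^{N}\rangle = \delta$, which pins the matrix down as $\begin{bmatrix}\delta & 1\\ 1 & \delta\end{bmatrix}$.

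For parts (ii) and (iii), the key observation is that $[0]$ is a unit for the horizontal sum and $[\infty]$ is a unit for the vertical product, so that $br([0]+U) = br(U)$ and $br([\infty]*U) = br(U)$; this immediately identifies one column of each of the two matrices. For the remaining, \emph{cross}, column I would substitute $\langle U\rangle = f_{U}[0]+g_{U}[\infty]$ and count closed components: $[\infty]+[0]$ is isotopic to $[\infty]$ with no closed loop, whereas $[\infty]+[\infty]$ is $[\infty]$ together with exactly one closed trivial circle, so $\langle [\infty]+U\rangle = (f_{U}+\delta g_{U})[\infty]$; dually, $[0]*[\infty]$ is just $[0]$, whereas $[0]*[0]$ is $[0]$ with one extra closed circle, so $\langle [0]*U\rangle = (\delta f_{U}+g_{U})[0]$. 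Reading off the two columns of each matrix produces precisely the stated formulas, and part (iii) is obtained from part (ii) by the same $90^{\circ}$-rotation symmetry that interchanges $[0]$ with $[\infty]$ (equivalently, by repeating the computation with the vertical product in place of the horizontal sum).

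The one step I would handle with genuine care --- and which is the only real conceptual content here --- is the functoriality claim used above: that closing up a tangle, or gluing a second tangle onto it, honestly defines a $\Z[a,a^{-1}]$-linear map on skein modules, so that the substitution $\langle T\rangle \mapsto f_{T}[0]+g_{T}[\infty]$ remains valid when $T$ sits inside a closure or a tangle product. This is completely standard --- it amounts to nothing more than the locality of the Kauffman skein relations --- but it is the hinge on which the reduction to the basis $\{[0],[\infty]\}$ rests; once it is granted, the remainder is elementary bookkeeping with at most two trivial circles, and I anticipate no real obstacle.
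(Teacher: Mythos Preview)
The paper does not actually prove this proposition: it is stated with a citation to \cite{EKT} and introduced only by the remark that ``the following properties are well known and easy to verify.'' So there is no in-paper argument to compare against.

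Your verification is correct and is exactly the sort of argument the paper is alluding to. Reducing to the basis $\{[0],[\infty]\}$ via the $\Z[a,a^{-1}]$-linearity of closure and gluing on skein modules is the standard move, and your four diagram checks ($[0]^N$, $[0]^D$, $[\infty]+[0]$, $[\infty]+[\infty]$, etc.) are all read off correctly. The one cosmetic point: you don't need to invoke linear independence of $[0]$ and $[\infty]$ for the proof itself---the identities hold at the level of skein elements regardless---though it is of course what makes $br(T)$ well defined in the first place.
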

Given a tangle decomposition of a link diagram as $D_L=(T+U)^N$, Proposition \ref{bracket properties} implies that the Kauffman bracket of $D_L$ can be expressed as the evaluation of a bilinear form $(T, U) \mapsto \langle D_L \rangle$ in $\Z[a, a^{-1}]$, made explicit by 
\begin{equation}
\label{matrix}
	\langle D_L \rangle = br(T)^T \begin{bmatrix}\delta & 1 \\ 1 & \delta \end{bmatrix} br(U).
\end{equation}

Notice that because the span of a polynomial is preserved under multiplication by a monomial, the span of the Kauffman bracket $\langle D_L \rangle$ agrees with the span of $X_L(a)$. In particular, the span of the Kauffman bracket and the span of the Jones polynomial are both link invariants related by $\spn(\langle D_L \rangle) = 4 \spn (V_L).$ 
Kauffman \cite{Kauffman}, Murasugi \cite{Murasugi}, and Thistlethwaite \cite{Thistlethwaite} proved that for nonsplit alternating links, the span of the Jones polynomial equals the minimal crossing number of the link. 
We will make use of these properties to calculate the bracket polynomial for certain tangles and the span of the Jones polynomial for several classes of knots. The first calculation, giving the bracket vectors for tangles consisting of horizontal or vertical twists, as in Figure \ref{fig:twists}, is proved in \cite[Lemma 2.2]{KW}.

\begin{figure}
\centering
\begin{tikzpicture}
\node[anchor=south west,inner sep=0] at (0,0) {\includegraphics[width=3in]{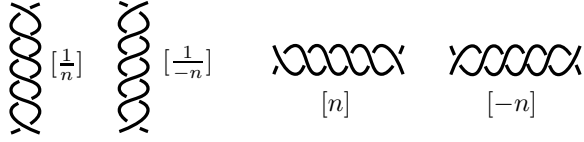}};
\node[label=above right:{$[\frac{1}{n}]$}] at (0.3, 0.5){};
\node[label=above right:{$[\frac{1}{-n}]$}] at (1.8, 0.5){};
\node[label=above right:{$[n]$}] at (3.9, 0){};
\node[label=above right:{$[-n]$}] at (6.1, 0){};
\end{tikzpicture}
\caption{Vertical and horizontal twist tangles are denoted $[1/n]$ and $[n]$, respectively.}
	\label{fig:twists}
\end{figure}

\begin{lemma}\cite[Lemma 2.2]{KW}
\label{twist bracket}
Let $n$ be a positive integer. Then
\begin{enumerate}[label=(\roman*)]
	\item \label{hor} $\langle [n] \rangle = a^n[0] + a^{n-2}\sum_{k=0}^{n-1}(-a^{-4})^k[\infty] = a^{n-2}\left( \frac{1-({-a^{-4}})^{n}}{1+a^{-4}} \right) [\infty] + a^{n}[0]$
	\item \label{vert} $\langle [1/n] \rangle = a^{-n+2}\sum_{k=0}^{n-1}(-a^{4})^k[0] + a^{-n}[\infty] = a^{-n}[\infty] + a^{-n+2} \left( \frac{1-(-a^{4})^{n}}{1+a^4} \right) [0]$
\end{enumerate}
\end{lemma}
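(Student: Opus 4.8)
The plan is to prove both formulas by induction on $n$, using the recursive structure of the Kauffman bracket as packaged in Proposition \ref{bracket properties}. For the base case $n=1$, the tangle $[1]$ is a single crossing, so axiom (ii) of the bracket gives $\langle [1]\rangle = a[0] + a^{-1}[\infty]$, which agrees with both sides of (i) at $n=1$; the identical computation handles $[1/1]$ in (ii). The alternative closed rational expressions on the right-hand side of each line will then follow at the very end from the finite geometric series identity $\sum_{k=0}^{n-1} x^{k} = (1-x^{n})/(1-x)$ with $x = -a^{-4}$ and $x = -a^{4}$ respectively, so the real content is just the recursion.

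For the inductive step of (i), I would write $[n+1] = [n] + [1]$ and apply Proposition \ref{bracket properties}\ref{bp 2} with $U=[1]$, so that $f_U = a$ and $g_U = a^{-1}$. The key simplification is $a + \delta a^{-1} = a + (-a^{2}-a^{-2})a^{-1} = -a^{-3}$, which reduces the transition matrix to $\begin{bmatrix} a & 0 \\ a^{-1} & -a^{-3} \end{bmatrix}$. Multiplying the inductive hypothesis $br([n]) = \bigl[\,a^{n},\ a^{n-2}\sum_{k=0}^{n-1}(-a^{-4})^{k}\,\bigr]^{T}$ by this matrix gives $f_{[n+1]} = a^{n+1}$ immediately and $g_{[n+1]} = a^{n-1} - a^{n-5}\sum_{k=0}^{n-1}(-a^{-4})^{k}$; pulling out the $k=0$ term of $a^{n-1}\sum_{k=0}^{n}(-a^{-4})^{k}$ and factoring $-a^{-4}$ from what remains shows this equals $a^{n-1}\sum_{k=0}^{n}(-a^{-4})^{k}$, which is the claimed $br([n+1])$.

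Part (ii) can be handled either by the same machinery, writing $[1/(n+1)] = [1/n] * [1/1]$ and invoking Proposition \ref{bracket properties}\ref{bp 3} (where the analogous simplification is $\delta a + a^{-1} = -a^{3}$), or more economically by symmetry: a quarter-turn rotation of the diagram sends the horizontal twist tangle $[n]$ to a vertical twist tangle and interchanges the basis elements $[0]\leftrightarrow[\infty]$, while passing to the mirror image substitutes $a \mapsto a^{-1}$ in the bracket; applying both operations to the formula in (i) produces exactly the formula in (ii). I would present whichever of these is cleaner in the fixed notation, most likely the direct induction for uniformity.

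I do not anticipate a genuine obstacle here; the argument is bookkeeping, and indeed this computation already appears as \cite[Lemma 2.2]{KW}, so the task is mainly to record it in the conventions set up above. The one place to be careful is matching sign and orientation conventions — which crossing is $[1]$ versus $[-1]$, which of $T^{N}, T^{D}$ is used, and the direction of the rotation in the symmetry argument — so that the $\delta$-dependent matrix entries and the reindexing of the geometric sums line up with the stated normalization rather than its mirror.
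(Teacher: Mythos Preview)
Your proposal is correct. Note, however, that the paper does not supply its own proof of this lemma: it is stated with a citation to \cite[Lemma 2.2]{KW} and used as input for the subsequent calculations. Your inductive argument via Proposition~\ref{bracket properties}\ref{bp 2} (and the symmetry/mirror observation for part~\ref{vert}) is exactly the standard verification one would give, and it goes through cleanly in the paper's conventions; the simplifications $a+\delta a^{-1}=-a^{-3}$ and $\delta a + a^{-1} = -a^{3}$ are the only nontrivial steps, and you have them right.
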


Using the calculations for horizontal and vertical twists, we can now calculate the Jones polynomials for $T(2, n)$ torus links. This formula is well-known, but we include a calculation for completeness. 

\begin{lemma}
\label{torus jones}
Let $T(2, n)$ be a torus link for $n\neq0$ that is assumed to have parallel strand orientations when $n$ is even. Then the Jones polynomial of $T(2, n)$ is 
\begin{eqnarray*}
V_{T(2,n)}(t) &=& (-1)^{3n+1} t^{(n-1)/2} \left( \frac{ 1 + t + t^2 + (-1)^n t^{n+1}}{1+t} \right)  \\
&=&(-1)^{3n+1}t^{(n-1)/2}  \left(t + 1 - t\sum_{k=0}^{n-1} {(-t)^{k}} \right).
\end{eqnarray*}
\end{lemma}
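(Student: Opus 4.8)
The plan is to realize $T(2,n)$ as a closure of an integer twist tangle, compute its Kauffman bracket using Lemma~\ref{twist bracket} and Proposition~\ref{bracket properties}, normalize by the writhe, and then substitute $t=a^{-4}$.

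First I would fix the standard diagram $D$ of the torus link $T(2,n)$, namely the closed $2$-braid on $n$ crossings, and identify it with the numerator closure $[n]^{N}$ of the horizontal twist tangle $[n]$. A quick check on small cases confirms the convention: $[1]^{N}$ and $[n]^{D}$ are unknotted, while $[2]^{N}$ is the Hopf link. This diagram has $n$ crossings of the same type, so with the parallel orientation when $n$ is even its writhe is $w(D)=n$; this sign choice is the one for which the final answer is the right-handed trefoil when $n=3$.

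Next I would apply Lemma~\ref{twist bracket}\ref{hor} to read off $br([n]) = \bigl[\,a^{n},\ a^{n-2}\sum_{k=0}^{n-1}(-a^{-4})^{k}\,\bigr]^{T}$, and then Proposition~\ref{bracket properties}\ref{bp 1} to get
\[
\langle D\rangle \;=\; \langle [n]^{N}\rangle \;=\; \delta\,a^{n} + a^{n-2}\sum_{k=0}^{n-1}(-a^{-4})^{k},
\qquad \delta = -a^{2}-a^{-2}.
\]
Replacing the geometric sum by $\frac{1-(-a^{-4})^{n}}{1+a^{-4}}$, clearing the denominator $1+a^{-4}$ and collecting terms turns this into
\[
\langle D\rangle \;=\; -\,a^{n-2}\cdot\frac{a^{8}+a^{4}+1+(-1)^{n}a^{4-4n}}{a^{4}+1}.
\]
The writhe-normalized polynomial is then $X_{T(2,n)}(a) = (-a)^{-3n}\langle D\rangle = (-1)^{3n+1}\,a^{-2n-2}\cdot\frac{a^{8}+a^{4}+1+(-1)^{n}a^{4-4n}}{a^{4}+1}$, and substituting $t=a^{-4}$ (so that $a^{4}=t^{-1}$, $a^{-2}=\sqrt{t}$, $a^{-2n}=t^{n/2}$, $a^{4-4n}=t^{n-1}$) and simplifying the rational function yields
\[
V_{T(2,n)}(t) \;=\; (-1)^{3n+1}\,t^{(n-1)/2}\left(\frac{1+t+t^{2}+(-1)^{n}t^{n+1}}{1+t}\right).
\]
The second displayed identity in the lemma follows from the elementary fact $\frac{1+t+t^{2}+(-1)^{n}t^{n+1}}{1+t} = t+1-t\sum_{k=0}^{n-1}(-t)^{k}$ (multiply through by $1+t$, or equivalently start from the explicit-sum form of Lemma~\ref{twist bracket}\ref{hor}), and the case $n<0$ reduces to $n>0$ via the mirror symmetry $T(2,n)=-T(2,-n)$, which corresponds to $a\mapsto a^{-1}$, i.e. $t\mapsto t^{-1}$, under which the displayed formula is invariant.

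I expect no serious obstacle. The only thing requiring care is the bookkeeping of signs and of the half-integer powers of $t$ (present exactly when $n$ is even), since the precise coefficient $(-1)^{3n+1}$ and exponent $t^{(n-1)/2}$ depend on getting the writhe, the choice of closure, and the substitution $t=a^{-4}$ all mutually consistent.
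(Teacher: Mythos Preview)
Your proposal is correct and follows essentially the same route as the paper: both compute the bracket of a twist tangle via Lemma~\ref{twist bracket} and Proposition~\ref{bracket properties}\ref{bp 1}, normalize by the writhe, and substitute $t=a^{-4}$. The only cosmetic difference is that you realize $T(2,n)$ directly as $[n]^{N}$ using part~\ref{hor}, whereas the paper realizes $T(2,-n)$ as $[1/n]^{D}$ using part~\ref{vert} and then mirrors at the end; your version is marginally more direct but otherwise identical in substance.
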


\begin{proof}
By our conventions, the torus link $T(2, -n)$ with $n>0$ is obtained from the denominator closure of the tangle with a vertical $(n)$-twist, denoted $[1/n]$. 
By Lemma \ref{twist bracket}\eqref{vert} together with Lemma \ref{bracket properties}\eqref{bp 1}, 
\begin{equation}
\label{geometric series formula}
\langle {\left[ \frac{1}{n} \right] }^{D} \rangle = \delta{a^{-n}} + {a^{-n+2}}\left( \frac{1-(-a^4)^{n}}{1+a^{4}} \right).
\end{equation}
or alternatively,
\begin{equation}
\label{summation formula}
\langle {\left[ \frac{1}{n} \right] }^{D} \rangle  = \delta{a^{-n}} + a^{-n+2}\sum_{k=0}^{n-1} {{\left( -a^4 \right)}^k} .
\end{equation}
Let us first consider $\langle[1/n]^D\rangle$ written with a geometric series as in \eqref{geometric series formula}. Multiplying through by $\delta$ gives
\begin{equation*}
\langle {\left[ \frac{1}{n} \right] }^{D} \rangle  = -a^{-n+2} -a^{-n-2} + {a^{-n+2}}\left( \frac{1-(-a^4)^{n}}{1+a^{4}} \right).
\end{equation*}
Since we assumed that a torus link has parallel strand orientations when $n$ is even, we have that the writhe $w(T(2,-n)) = -n$, which gives us the polynomial
\[
X_{T(2,-n)}(a) = (-1)^{3n+1} a^{2n-2} \left( a^{4} + 1 - a^{4} \left( \frac{1- {\left( -a^4 \right)}^n}{1+a^4} \right)  \right).
\]
Next, we mirror the diagram of $T(2, -n)$ to obtain $T(2, n)$. Mirroring induces the change of variable $t\to t^{-1}$. Following this with a change of variables $a \to t^{-\frac{1}{4}}$, we obtain the Jones polynomial,
\[
V_{T(2,n)}(t) = (-1)^{3n+1} t^{(n-1)/2} \left( \frac{ 1 + t + t^2 + (-1)^n t^{n+1}}{1+t} \right).
\]
Stated for the case of a knot when $n$ is odd we have
\[
V_{T(2,n)}(t) = t^{(n-1)/2} \left( \frac{ 1 - t^3 - t^{n+1} +  t^{n+2}}{1-t^2} \right).
\]
Next we will consider the the version of the formula with the summation \eqref{summation formula}. This proceeds similarly to the first case and gives the polynomial
\[
X_{T(2,-n)}(a) = (-1)^{3n+1}a^{2n-2} \left(a^4 + 1 - a^4\sum_{k=0}^{n-1} {(-a^4)^{k}} \right).
\]
After mirroring and applying $t\to t^{-1}$, and the change of variables $a \to t^{-\frac{1}{4}}$, we obtain
\[
V_{T(2,-n)} = (-1)^{3n+1}t^{(n-1)/2}  \left(t + 1 - t\sum_{k=0}^{n-1} {(-t)^{k}} \right). \qedhere
\]
\end{proof}
For example, in the case of $n=3$, we obtain the polynomial $V_{T(2,3)}(t)=t+t^3-t^4$.

\begin{corollary}
\label{torus bracket}
When $n>1$ the span of the Jones polynomial of the $T(2, n)$ torus link is $n$.
\end{corollary}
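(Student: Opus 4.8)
I would read the span directly off the closed formula established in Lemma \ref{torus jones}, using the fact that the span of a Laurent polynomial is unchanged by multiplication by a monomial. So discard the prefactor $(-1)^{3n+1}t^{(n-1)/2}$, which merely shifts exponents, and reduce to computing the span of $P_n(t) := t + 1 - t\sum_{k=0}^{n-1}(-t)^k$.

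Next I would expand the geometric sum: $t\sum_{k=0}^{n-1}(-t)^k = \sum_{j=1}^{n}(-1)^{j-1}t^j = t - t^2 + t^3 - \cdots + (-1)^{n-1}t^n$, so the linear terms cancel and $P_n(t) = 1 + t^2 - t^3 + t^4 - \cdots + (-1)^n t^n = 1 + \sum_{k=2}^{n}(-1)^k t^k$. For $n\ge 2$ the constant term is $1\neq 0$ and the coefficient of $t^n$ is $(-1)^n\neq 0$, so the lowest and highest exponents of $P_n$ are $0$ and $n$, whence $\spn(V_{T(2,n)}) = n$. This is also exactly where the hypothesis $n>1$ is used: for $n=1$ one gets $P_1(t)=1$ and span $0$, consistent with $T(2,1)$ being the unknot.

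There is essentially no obstacle here; the only point requiring a moment's care is verifying that no cancellation occurs at the two extreme exponents — that the $+t$ outside the sum exactly kills the $-t$ from the $k=0$ term while leaving the constant term and the $t^n$ term intact. As a sanity check (and an alternative route that avoids the formula altogether), one can instead invoke the Kauffman–Murasugi–Thistlethwaite theorem quoted in Section \ref{sec:bracket}: $T(2,n)$ is a nonsplit alternating link whose standard diagram is reduced with $n$ crossings when $n>1$, so the span of its Jones polynomial equals its crossing number $n$.
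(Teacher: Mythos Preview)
Your argument is correct. Your primary route---reading the span directly off the summation formula of Lemma \ref{torus jones}---is not what the paper does, though you do mention the paper's actual argument at the end as a sanity check: the paper simply notes that for $n>1$ the link $T(2,n)$ is nonsplit and alternating (with a reduced alternating diagram of $n$ crossings) and invokes the Kauffman--Murasugi--Thistlethwaite theorem. Your direct computation has the virtue of being self-contained and of not appealing to a deep result; the paper's version has the virtue of being one line and of matching how the analogous Lemmas \ref{generalized twist bracket} and \ref{knots Kq} are handled. Both are fine here.
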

\begin{proof}
This follows because for $n>1$, the $T(2, n)$ torus links are non-split and alternating. 
Note that when $n=0$, $T(2, 0)$ is the two-component unlink and when $n=1$, $T(2, 1)$ is the unknot, and these have spans 1 and 0, respectively.
\end{proof}

\begin{lemma}
\label{generalized twist bracket}
The span of the Jones polynomial of the generalized twist link $K(q, p)$ is $q+p$.
\end{lemma}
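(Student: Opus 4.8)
The generalized twist link $K(q,p)$ is alternating, so there are two natural routes. The quick one, parallel to Corollary~\ref{torus bracket}, is to observe that the diagram of $K(q,p)$ in Figure~\ref{fig:twist} is a connected, reduced alternating diagram of a non-split link with exactly $q+p$ crossings, so by the Kauffman--Murasugi--Thistlethwaite theorem its crossing number, and hence $\spn(V_{K(q,p)})$, equals $q+p$. I would instead give a self-contained computation in the style of Lemma~\ref{torus jones}; this sidesteps the need to verify that the standard diagram is reduced, and it simultaneously identifies the extreme terms of the bracket.

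The plan is as follows. First I would express $K(q,p)$ as the numerator closure $([q]+[1/p])^{N}$ of a sum of twist tangles read off from Figure~\ref{fig:twist} (the precise convention, including mirroring, only shifts the bracket by a monomial and so does not affect the span). By Lemma~\ref{twist bracket} the bracket vectors are
\[
br([q]) = \begin{bmatrix} a^{q} \\ a^{q-2}\sum_{k=0}^{q-1}(-a^{-4})^{k} \end{bmatrix}, \qquad br([1/p]) = \begin{bmatrix} a^{-p+2}\sum_{k=0}^{p-1}(-a^{4})^{k} \\ a^{-p} \end{bmatrix},
\]
and substituting them into the bilinear form \eqref{matrix} gives
\[
\langle K(q,p)\rangle = \bigl(\delta f_{[q]}+g_{[q]}\bigr)\,f_{[1/p]} + \bigl(f_{[q]}+\delta g_{[q]}\bigr)\,g_{[1/p]}.
\]
The key simplification is that $\delta=-a^{2}-a^{-2}$ telescopes against the alternating geometric sums: one finds $f_{[q]}+\delta g_{[q]}=(-1)^{q}a^{-3q}$, so the second summand is the single monomial $(-1)^{q}a^{-3q-p}$, while $\delta f_{[q]}+g_{[q]}$ has top degree $q+2$ and bottom degree $-3q+2$, and $f_{[1/p]}$ has top degree $3p-2$ and bottom degree $-p+2$. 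Thus the first summand has top degree $q+3p$ and bottom degree $-3q-p+4$. Checking that the top term of $\langle K(q,p)\rangle$ is the (non-vanishing) product of the two leading terms, and that its bottom term is the isolated monomial at degree $-3q-p$, gives $\spn(\langle K(q,p)\rangle) = (q+3p)-(-3q-p)=4(q+p)$, hence $\spn(V_{K(q,p)}) = \tfrac14\spn(\langle K(q,p)\rangle) = q+p$.

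The main obstacle is bookkeeping rather than ideas: the interior of the bracket polynomial cancels heavily, so one must argue carefully that nothing cancels at the two extreme exponents, and one must treat the small cases $q=1$ and $p=1$ by hand (there some intermediate terms drop out---e.g.\ $\delta f_{[1]}+g_{[1]}=-a^{3}$ is already a monomial---but a direct check shows the extreme exponents, and therefore the span $4(q+p)$, are unchanged). Everything else is of the same flavor as the geometric-series manipulations already carried out in Lemma~\ref{torus jones}.
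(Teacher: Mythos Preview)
Your computation is correct, but the paper takes the one-line route you describe (and then set aside) in your first paragraph: it simply observes that $K(q,p)$ is a non-split alternating link whose standard diagram has $q+p$ crossings, and invokes the Kauffman--Murasugi--Thistlethwaite theorem to conclude $\spn V_{K(q,p)}=q+p$.

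So the difference is purely methodological. The paper's argument is immediate once one accepts that the diagram in Figure~\ref{fig:twist} is reduced and alternating; your bracket computation trades that appeal to a deep theorem for an explicit identification of the extreme terms via Lemma~\ref{twist bracket} and the bilinear form \eqref{matrix}. Your approach is self-contained and in the spirit of Lemmas~\ref{knots Kq}--\ref{knots Kqr}, and the telescoping observation $f_{[q]}+\delta g_{[q]}=(-1)^{q}a^{-3q}$ is exactly what isolates the bottom monomial cleanly. The only cost is the bookkeeping you flag for $q=1$ and $p=1$, where some intermediate polynomials degenerate to monomials; as you note, the extreme exponents $q+3p$ and $-3q-p$ persist in those cases, so the span is still $4(q+p)$.
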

\begin{proof}
This follows because the generalized twist links $K(q, p)$ are non-split alternating links of minimal crossing number $q+p$.
\end{proof}

\begin{figure}
\centering
\begin{tikzpicture}
\node[anchor=south west,inner sep=0] at (0,0) {\includegraphics[width=0.6in]{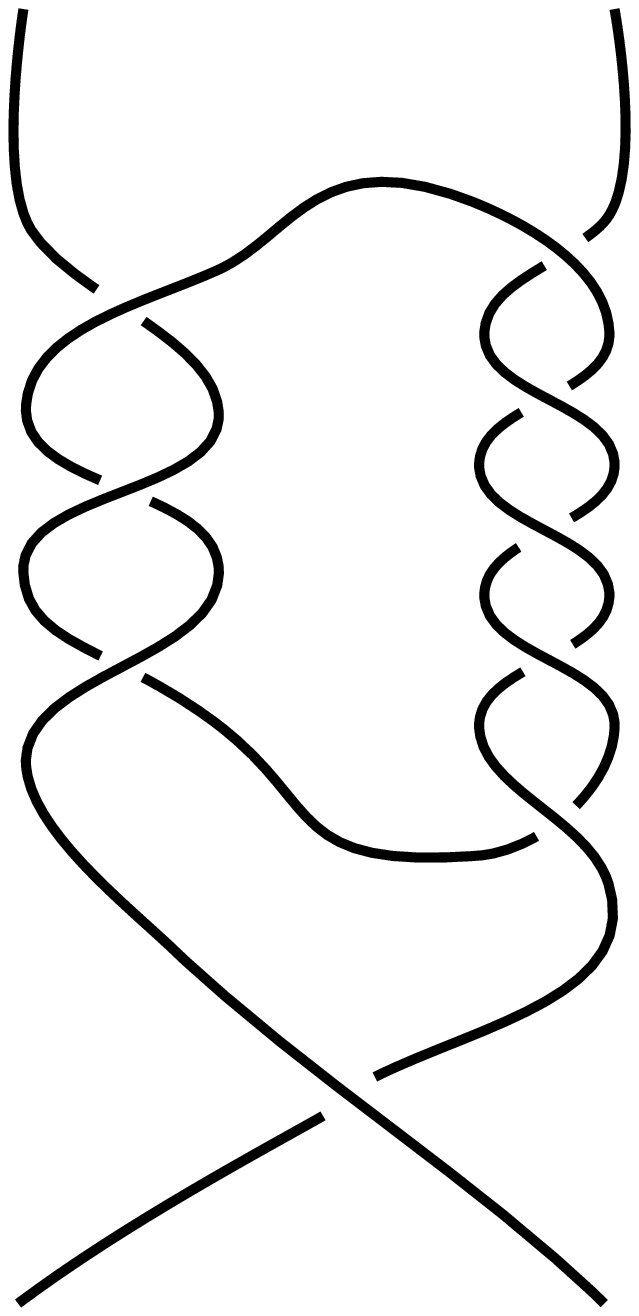}};
\node[label=above right:{$1/q$}] at (1.5, 1.5){};
\node[label=above right:{$-1/3$}] at (-1.2, 1.5){};
\node[label=above right:{$+1$}] at (-0.5, 0.3){};
\end{tikzpicture}
\caption{The tangle $T_q:=([-1/3]+[1/q])*[1]$. In this example, $q=5$. }
\label{fig:specialtangle}
\end{figure}

For several statements, we will need to calculate the entire Kauffman bracket vector of a special tangle $T_q:=([-1/3]+[1/q])*[1]$ that is shown in Figure \ref{fig:specialtangle}. 
\begin{lemma}
\label{bracket T}
The Kauffman bracket vector of $T_q:=([-1/3]+[1/q])*[1]$ is 
\[
	br (T_q ) = 
	\begin{cases}
	 	\begin{bmatrix} a^{-q-6} - 2a^{-q-2}-a^{-q+6} + a^{3q-2} \\
		a^{-q-8} - a^{-q-4} + a^{q-4}\sum_{k=1}^{q-1}(-a^{4})^k\end{bmatrix}, &q\geq3  \\ \\
	 	\begin{bmatrix} a^{2} - a^{-2} + a^{-6} \\
		- a^{-2} + a^{-6} \end{bmatrix}, &q=1 
	\end{cases}		
\]
\end{lemma}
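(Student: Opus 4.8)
The plan is to compute $br(T_q)$ directly from the tangle decomposition $T_q = ([-1/3]+[1/q])*[1]$ using the matrix formulas for bracket vectors recorded in Proposition \ref{bracket properties}, together with the explicit twist-tangle brackets from Lemma \ref{twist bracket}. The computation breaks naturally into three layers, applied from the inside out. First I would record the bracket vectors of the three building blocks: $br([-1/3])$, $br([1/q])$, and $br([1])$. For $[1/q]$ this is Lemma \ref{twist bracket}\eqref{vert}; for $[-1/3]$ I would take $n=3$ in that same formula and then mirror (send $a \to a^{-1}$ and swap the roles of $[0]$ and $[\infty]$, since the mirror of a vertical twist is a horizontal twist), or equivalently read it off Lemma \ref{twist bracket}\eqref{hor} with $a$ inverted; and $br([1])$ is immediate from Lemma \ref{twist bracket}\eqref{hor} with $n=1$, giving $f_{[1]} = a$, $g_{[1]} = a^{-1}$ (or the mirrored version, depending on sign conventions for $T_q$ — I would fix this against the figure).

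Next I would carry out the horizontal sum $[-1/3] + [1/q]$ using Proposition \ref{bracket properties}\eqref{bp 2}: with $U = [1/q]$, $br([-1/3]+[1/q]) = \begin{bmatrix} f_U & 0 \\ g_U & f_U + \delta g_U \end{bmatrix} br([-1/3])$, where $f_U = f_{[1/q]}$ and $g_U = g_{[1/q]} = a^{-q}$. This yields a two-entry vector whose components are polynomials in $a$; I would simplify the geometric-series factor $\sum_{k=0}^{q-1}(-a^4)^k = \frac{1-(-a^4)^q}{1+a^4}$ as convenient, keeping both the closed and summed forms since the statement of the lemma uses both. Then I would apply the vertical product with $[1]$ via Proposition \ref{bracket properties}\eqref{bp 3}: $br\big(([-1/3]+[1/q])*[1]\big) = \begin{bmatrix} \delta f_{[1]} + g_{[1]} & f_{[1]} \\ 0 & g_{[1]} \end{bmatrix} br([-1/3]+[1/q])$. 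Substituting $\delta = -a^2 - a^{-2}$ and the values of $f_{[1]}, g_{[1]}$ gives the final vector, which I would expand and collect into the stated closed form, factoring out monomials and using $\delta \cdot a^{-q} = -a^{-q+2} - a^{-q-2}$ to produce the terms like $a^{-q-6} - 2a^{-q-2} - a^{-q+6} + a^{3q-2}$ in the first component and $a^{-q-8} - a^{-q-4} + a^{q-4}\sum_{k=1}^{q-1}(-a^4)^k$ in the second. Finally I would treat $q=1$ separately: there $[1/1] = [1]'$ is a single crossing, the sum $\sum_{k=1}^{q-1}$ is empty, and several terms collapse or cancel, so I would simply specialize the general computation (or redo it from scratch for the degenerate tangle) to obtain $br(T_1) = \begin{bmatrix} a^2 - a^{-2} + a^{-6} \\ -a^{-2} + a^{-6}\end{bmatrix}$, and check this is consistent with the $q \to 1$ limit of the $q \ge 3$ formula modulo the empty-sum convention.

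The main obstacle I anticipate is purely bookkeeping: getting the sign and orientation conventions right. The mirror tangle $[-1/3]$, the precise form of $[1]$ versus its mirror, the placement of numerator vs. denominator in the sum and product operations, and the interaction of all these with the $\delta$ factors must all be tracked carefully, because a single sign slip propagates through two matrix multiplications. The cleanest way to guard against this is to sanity-check the output at small values — e.g., verify that an appropriate closure of $T_3$ reproduces the Jones polynomial of the knot $K_3$ (or $5_2$, or whichever small knot $T_3^N$ or $T_3^D$ is), and that the $q=1$ case matches a closure of a known small knot. There is no conceptual difficulty here; the lemma is a computational input to the later theorems, so the proof is just a disciplined application of Proposition \ref{bracket properties} and Lemma \ref{twist bracket} with careful algebra, plus the separate easy verification of the $q=1$ base case.
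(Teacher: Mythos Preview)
Your approach is essentially identical to the paper's: compute $br([-1/3]+[1/q])$ via Proposition~\ref{bracket properties}\ref{bp 2} with $U=[1/q]$, then apply Proposition~\ref{bracket properties}\ref{bp 3} with $U=[1]$ (where $\delta f_{[1]}+g_{[1]}=-a^3$, $f_{[1]}=a$, $g_{[1]}=a^{-1}$), and treat $q=1$ separately. One small correction to your setup: mirroring a tangle sends $a\mapsto a^{-1}$ but does \emph{not} swap $[0]$ and $[\infty]$ (that is the effect of a $90^\circ$ rotation, not a mirror), so $br([-1/3])=[\,a-a^{-3}+a^{-7},\ a^3\,]^T$ comes directly from substituting $a\mapsto a^{-1}$ into Lemma~\ref{twist bracket}\ref{vert} with $n=3$, which is exactly the vector the paper uses.
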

\begin{proof}
By Lemma \ref{bracket properties}\eqref{bp 2}, when $q\geq3$, we calculate 
\begin{eqnarray*}
	br([-1/3]+[1/q]) &=& \begin{bmatrix} f_q & 0 \\ g_q & f_q + \delta g_q \end{bmatrix} 
	\begin{bmatrix} f_{-1/3} \\ g_{-1/3} \end{bmatrix} \\
	&=& \begin{bmatrix} a^{-q+2}\sum_{k=0}^{q-1}(-a^{4})^k & 0 \\  a^{-q}& a^{-q+2}\sum_{k=0}^{q-1}(-a^{4})^k + \delta a^{-q} \end{bmatrix} \begin{bmatrix} a-a^{-3}+a^{-7} \\ a^3 \end{bmatrix} \\
	&=&	\begin{bmatrix} (a^{-q+3} -a^{-q-1} + a^{-q-5})\sum_{k=0}^{q-1}(-a^4)^k \\ 
	a^{-q-7} - a^{-q-3} + a^{-q+5}\sum_{k=1}^{q-1}(-a^4)^k \end{bmatrix},
\end{eqnarray*}
where the indexing of the last sum takes into account cancellation of terms. Applying next Lemma \ref{bracket properties}\eqref{bp 3}, we obtain
\begin{eqnarray*}
	br(([-1/3]+[1/q])*[1]) &=& \begin{bmatrix} -a^3 & a \\ 0 & a^{-1} \end{bmatrix} 
	br(T_q) \\
	&=&	\begin{bmatrix} -a^3 & a \\ 0 & a^{-1} \end{bmatrix} 
	\begin{bmatrix} a^{-q-6} - 2a^{-q-2}-a^{-q+6} + a^{3q-2} \\ a^{-q-8} - a^{-q-4} + a^{q-4}\sum_{k=1}^{q-1}(-a^{4})^k\end{bmatrix} \\
	&=&\begin{bmatrix} a^{-q-6} - 2a^{-q-2}-a^{-q+6} + a^{3q-2} \\
	a^{-q-8} - a^{-q-4} + a^{q-4}\sum_{k=1}^{q-1}(-a^{4})^k\end{bmatrix},	
\end{eqnarray*}
where the first entry of the matrix product is simplified after cancelling numerous terms.

Next, consider the special case that $q=1$. By Lemma \ref{bracket properties}\eqref{bp 2}, we have
\begin{equation*}
	br([-1/3]+[1/q]) = \begin{bmatrix} a & 0 \\  a^{-1}& -a^{-3} \end{bmatrix} \begin{bmatrix} a-a^{-3}+a^{-7} \\ a^3 \end{bmatrix}  
	=\begin{bmatrix} a^2-a^{-2}+a^{-6} \\ -a^{-4}+a^{-8} \end{bmatrix}. \qedhere
\end{equation*}
\end{proof}

In the following two lemmas, we determine the span of the Kauffman bracket for two families of knots that are built from the special tangle $T_q$. The two classes of knots, $K_q$ and $K_{q,r}$, are shown in Figure \ref{fig:specialknots}. We will assume that $q\geq 1$ and $r\geq 3$ are both odd to ensure we have a knot. For example, taking $q=r=3$ gives the knot$K_{3, 3} = 10_{143}$.  Taking $q=5$ and $r=3$ gives $K_{5, 3} = 12n_{468}$, and taking $q=3$ and $r=5$ gives $K_{3, 5} = 12n_{570}$ \cite{knotinfo}. 

\begin{figure}[t]
	\centering
	\begin{subfigure}[t]{.5\textwidth}
		\centering
		    \begin{tikzpicture}
            \node[anchor=south west,inner sep=0] at (0,0) {\includegraphics[width=2.5in]{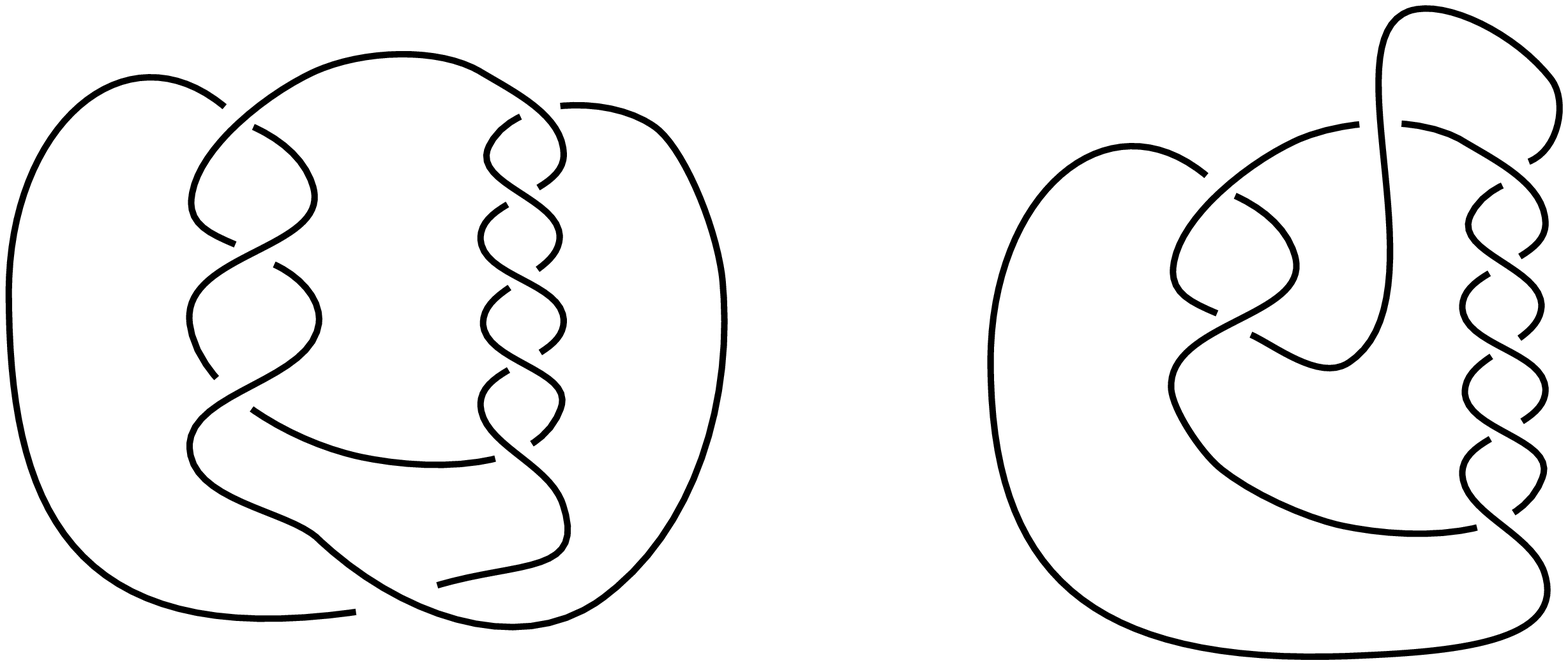}};
            \node[label=above right:{$q$}] at (2.2,1){};
            \end{tikzpicture}
		\caption{The knot $K_q:=T_q^D = (([-1/3]+[1/q])*[1])^D$}
		\label{fig:specialknot1}
	\end{subfigure}
	\qquad
	\begin{subfigure}[t]{.4\textwidth}
		\centering
			\begin{tikzpicture}
            \node[anchor=south west,inner sep=0] at (0,0) {\includegraphics[width=1.5in]{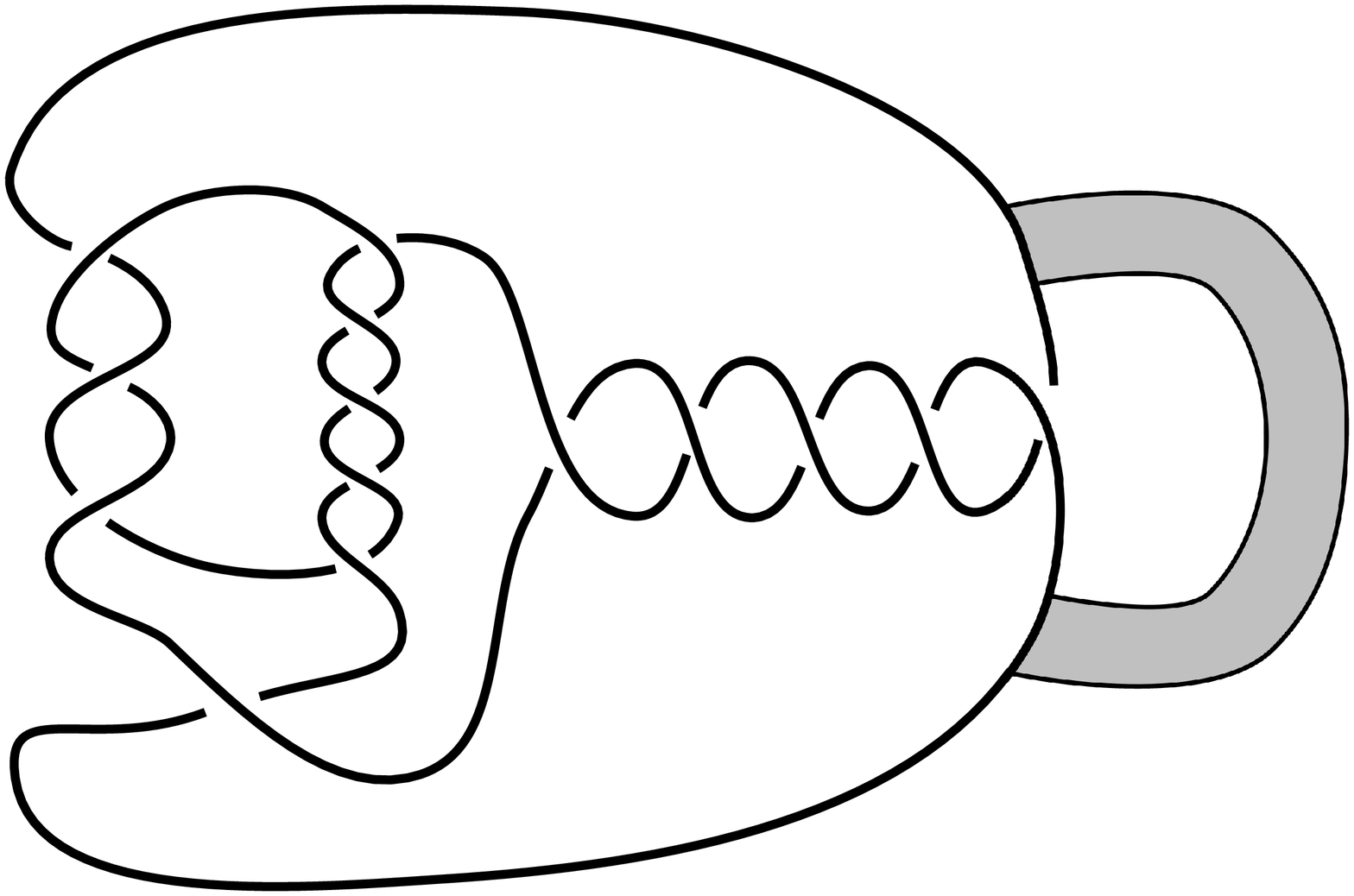}};
            \node[label=above right:{$q$}] at (1,1){};
            \node[label=above right:{$r$}] at (2,1.5){};
            \end{tikzpicture}
		\caption{The knot $K_{q,r}:=(T_q + [r])^N$}
		\label{fig:specialknot2}
	\end{subfigure}
	\caption{(Left) The knot $K_q:=T_q^D = (([-1/3]+[1/q])*[1])^D$. The isotopy demonstrates that $K_q$ is an alternating knot. (Right) The knot $K_{q,r}$. The shaded band indicates where an H(2)-move relates $K_{q,r}$ to $K_q$.}
	\label{fig:specialknots}
\end{figure}

\begin{lemma}
\label{knots Kq}
The span of the Jones polynomial of the knot $K_q$ is $q+3$.
\end{lemma}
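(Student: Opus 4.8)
The plan is to recognize $K_q$ as a nonsplit alternating knot and apply the theorem of Kauffman, Murasugi, and Thistlethwaite: the span of the Jones polynomial of a nonsplit alternating link equals its minimal crossing number, which is realized by any reduced alternating diagram. The diagram $T_q^D = (([-1/3]+[1/q])*[1])^D$ as it is first drawn has $3+q+1 = q+4$ crossings (three from $[-1/3]$, $q$ from $[1/q]$, one from $[1]$), and it is neither reduced nor alternating; the content of Figure \ref{fig:specialknot1} is that after the indicated isotopy $K_q$ is carried to a reduced alternating diagram with $q+3$ crossings. Granting this, the conclusion is immediate: $K_q$ is a knot, hence a nonsplit link, and it is not the unknot (a reduced alternating diagram of it has $q+3\geq 4$ crossings), so Kauffman--Murasugi--Thistlethwaite gives $\spn V_{K_q} = q+3$.

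A second, self-contained route computes the Kauffman bracket directly from the data already assembled. Since $K_q = T_q^D$, Proposition \ref{bracket properties}\eqref{bp 1} gives $\langle K_q \rangle = f_{T_q} + \delta g_{T_q}$, where $br(T_q) = [f_{T_q}, g_{T_q}]^T$ is the bracket vector computed in Lemma \ref{bracket T}. One expands this Laurent polynomial, isolates its terms of lowest and highest $a$-degree, and verifies that neither is cancelled by any other term; this produces $\spn \langle K_q \rangle = 4(q+3)$. Since the span of the Kauffman bracket of a diagram equals four times the span of the Jones polynomial, we again obtain $\spn V_{K_q} = q+3$.

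The only genuine work in the first approach is checking the claim encoded in Figure \ref{fig:specialknot1} --- that the isotopy eliminates exactly one of the $q+4$ crossings and leaves a diagram that alternates with no nugatory crossing. In the second approach, the delicate point is the degree bookkeeping needed to be sure that no cancellation happens at the two extreme $a$-degrees of $\langle K_q\rangle$; everything else is a direct substitution using Lemma \ref{bracket T}. I would take the alternating argument as the main proof, since it is the shortest and it is exactly what the figure is set up to supply, and include the bracket computation as a cross-check.
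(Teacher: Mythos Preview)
Your main approach---recognizing via the isotopy in Figure~\ref{fig:specialknot1} that $K_q$ has a reduced alternating diagram with $q+3$ crossings and then invoking Kauffman--Murasugi--Thistlethwaite---is exactly the paper's proof. The direct bracket computation you offer as a cross-check is not in the paper, but it is a reasonable backup and the ingredients (Lemma~\ref{bracket T} and Proposition~\ref{bracket properties}\eqref{bp 1}) are already in place.
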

\begin{proof}
The statement follows because for all $q$, $K_q$ is an alternating knot of $q+3$ crossings, as shown in Figure \ref{fig:specialknots}.
\end{proof}

\begin{lemma}
\label{knots Kqr}
The span of the Jones polynomial of the knot $K_{q,r}$ is $q+r+2$.
\end{lemma}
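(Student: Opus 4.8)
The plan is to compute the Kauffman bracket $\langle K_{q,r}\rangle$ directly from the tangle decomposition $K_{q,r}=(T_q+[r])^N$ and read off its span. Since the span of a Laurent polynomial is unchanged under multiplication by a monomial, we have $\spn(V_{K_{q,r}})=\frac14\spn(\langle K_{q,r}\rangle)$, so it suffices to show $\spn(\langle K_{q,r}\rangle)=4(q+r+2)$. Because $K_{q,r}=(T_q+[r])^N$, equation~\eqref{matrix} together with Proposition~\ref{bracket properties}\eqref{bp 1} gives
\[
  \langle K_{q,r}\rangle \;=\; br(T_q)^T\begin{bmatrix}\delta & 1\\ 1 & \delta\end{bmatrix}br([r]) \;=\; f_{T_q}\,\langle[r]^N\rangle \;+\; g_{T_q}\,\langle[r]^D\rangle,
\]
where $br(T_q)=[f_{T_q},g_{T_q}]^T$ is the bracket vector computed in Lemma~\ref{bracket T}.

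Next I would pin down the two scalars coming from the $[r]$-tangle. Using Lemma~\ref{twist bracket}\eqref{hor} and Proposition~\ref{bracket properties}\eqref{bp 1} — or, more transparently, observing that $[r]^D$ is a diagram of the unknot carrying $r$ like-signed nugatory crossings and that $[r]^N$ is the standard alternating diagram of the torus link $T(2,r)$ — one finds that, since $r$ is odd, $\langle[r]^D\rangle=(-a^{-3})^r=-a^{-3r}$ is a single monomial, while $\langle[r]^N\rangle=\delta a^r+a^{r-2}\sum_{k=0}^{r-1}(-a^{-4})^k$ has highest-degree term $-a^{r+2}$ and lowest-degree term $a^{-3r+2}$ (the two degree-$(r-2)$ terms cancel); in particular $\spn\langle[r]^N\rangle=4r$, as it must by Corollary~\ref{torus bracket}.

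Substituting these, $\langle K_{q,r}\rangle = f_{T_q}\langle[r]^N\rangle - a^{-3r}g_{T_q}$, and it remains to identify its extreme terms. From Lemma~\ref{bracket T} (the case $q\ge 3$) one reads off that $f_{T_q}$ has highest-degree term $a^{3q-2}$ and lowest-degree term $a^{-q-6}$, while $g_{T_q}$ has lowest-degree term $a^{-q-8}$ and every term of $a^{-3r}g_{T_q}$ has degree strictly below $3q+r$. Multiplying out: the highest-degree term of $f_{T_q}\langle[r]^N\rangle$ is $a^{3q-2}\cdot(-a^{r+2})=-a^{3q+r}$, and since $a^{-3r}g_{T_q}$ contributes nothing in degrees $\ge 3q+r$, the top term of $\langle K_{q,r}\rangle$ is $-a^{3q+r}$ with no cancellation; dually, the lowest-degree term of $f_{T_q}\langle[r]^N\rangle$ is $a^{-q-6}\cdot a^{-3r+2}=a^{-q-3r-4}$, which lies strictly above the unique lowest term $-a^{-3r}\cdot a^{-q-8}=-a^{-q-3r-8}$ of $-a^{-3r}g_{T_q}$, so the bottom term of $\langle K_{q,r}\rangle$ is $-a^{-q-3r-8}$, again with no cancellation. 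Hence $\spn\langle K_{q,r}\rangle=(3q+r)-(-q-3r-8)=4q+4r+8$, i.e.\ $\spn V_{K_{q,r}}=q+r+2$. The remaining case $q=1$ is handled identically, using the $q=1$ branch of the bracket vector in Lemma~\ref{bracket T}.

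The bracket algebra is routine; the step that requires real attention is the last one, where one must use the explicit shape of $br(T_q)$ from Lemma~\ref{bracket T} to confirm that $a^{-3r}g_{T_q}$ neither reaches degree $3q+r$ at the top nor fails to supply the strictly minimal term at the bottom, so that no unexpected cancellation occurs at either end of $\langle K_{q,r}\rangle$. It is worth noting that, unlike in Corollary~\ref{torus bracket} or Lemma~\ref{knots Kq}, one cannot shortcut this by invoking the Kauffman--Murasugi--Thistlethwaite theorem: the evident diagram of $K_{q,r}$ has $q+r+4$ crossings while the span is only $q+r+2$, reflecting the fact that $K_{q,r}$ is not alternating.
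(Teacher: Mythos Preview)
Your proof is correct and follows essentially the same route as the paper's: both compute $\langle K_{q,r}\rangle$ from the tangle decomposition $(T_q+[r])^N$ via the bilinear pairing~\eqref{matrix}, feed in the bracket vectors from Lemmas~\ref{bracket T} and~\ref{twist bracket}, and then isolate the extreme monomials $-a^{3q+r}$ and $-a^{-q-3r-8}$. The only cosmetic differences are that you package the computation as $f_{T_q}\langle[r]^N\rangle + g_{T_q}\langle[r]^D\rangle$ and argue the noncancellation at each end explicitly, whereas the paper writes out the expanded product~\eqref{long} and simply asserts the extreme terms; and the paper carries out the $q=1$ case in full rather than deferring it as you do.
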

\begin{proof}
The family of knots $K_{q,r}$ is obtained as the numerator closure of the tangle sum $T_q+[r]$. Therefore Eq. \eqref{matrix} and Lemma \ref{bracket T} may be applied to calculate the bracket of $K_{q,r}$ as it appears in the diagram of $(T_q+[r])^N$. When $q\geq 3$, 
\begin{eqnarray*}
	\langle (T_q+[r])^N \rangle &=& br(T_q)^T \begin{bmatrix}\delta & 1 \\ 1 & \delta \end{bmatrix} br([r]) \\
	&=& \begin{bmatrix} a^{-q-6} - 2a^{-q-2}-a^{-q+6} + a^{3q-2} \\ a^{-q-8} - a^{-q-4} + a^{q-4}\sum_{k=1}^{q-1}(-a^{4})^k\end{bmatrix}^T 
	\begin{bmatrix}\delta & 1 \\ 1 & \delta \end{bmatrix}
	\begin{bmatrix} a^r \\ a^{r-2}\sum_{k=0}^{r-1}(-a^{-4})^k \end{bmatrix} \\
	&=& \begin{bmatrix} a^{-q-6} - 2a^{-q-2}-a^{-q+6} + a^{3q-2} \\ 
	a^{-q-8} - a^{-q-4} + a^{q-4}\sum_{k=1}^{q-1}(-a^{4})^k\end{bmatrix}^T 
	\begin{bmatrix} \delta a^r + a^{r-2}\sum_{k=0}^{r-1}(-a^{-4})^k \\ a^r+ \delta a^{r-2}\sum_{k=0}^{r-1}(-a^{-4})^k \end{bmatrix} 
\end{eqnarray*}
After noticing that the second entry of the last vector is a telescoping sum that reduces to $-a^{-3r}$, we obtain the product
\begin{equation}
\begin{aligned}
\label{long}
\langle (T_q+[r])^N \rangle &=& \left( a^{-q-6} - 2a^{-q-2}-a^{-q+6} + a^{3q-2} \right) \left( \delta a^r + a^{r-2}\sum_{k=0}^{r-1}(-a^{-4})^k \right) \\
 && + \left( a^{-q-8} - a^{-q-4} + a^{q-4}\sum_{k=1}^{q-1}(-a^{4})^k \right) \left( -a^{-3r} \right). 
\end{aligned}
\end{equation}
We need only extract the terms of highest and lowest exponents from Eq. \eqref{long}. The polynomial reduces to 
\[
	-a^{3q+r} + (\text{interior terms}) -a^{-3r-q-8},
\]
from which it is clear that the span of the bracket is $4(q+r+2)$ and the span of the Jones polynomial is $q+r+2$. 

When $q=1$,
\begin{eqnarray*}
	\langle (T_q+[r])^N \rangle &=& br(T_q)^T \begin{bmatrix}\delta & 1 \\ 1 & \delta \end{bmatrix} br([r]) \\
	&=& \begin{bmatrix} a^2 -a^{-2} + a^{-6} \\ -a^{-4} + a^{-8} \end{bmatrix}^T 
	\begin{bmatrix}\delta & 1 \\ 1 & \delta \end{bmatrix}
	\begin{bmatrix} a^r \\ a^{r-2}\sum_{k=0}^{r-1}(-a^{-4})^k \end{bmatrix} \\
	&=& \begin{bmatrix} a^2 -a^{-2} + a^{-6} \\ -a^{-4} + a^{-8} \end{bmatrix}^T 
	\begin{bmatrix} \delta a^r + a^{r-2}\sum_{k=0}^{r-1}(-a^{-4})^k \\ -a^{-3r} \end{bmatrix} \\
	&=& -a^{r+4} + (\text{interior terms}) -a^{-3r-6}.
\end{eqnarray*}
The span is then $4(r+3) = 4(r+q+2)$, and the span of the Jones polynomial is $q+r+2$. 
\end{proof}

Finally, we state a lemma that we will require when considering the spans of the Jones polynomials of knots.
\begin{lemma}
\label{span bound}
The span of the Jones polynomial of a knot cannot be one or two. 
\end{lemma}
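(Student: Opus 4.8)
The plan is to argue by contradiction using two classical evaluations of the Jones polynomial of a knot: the normalization $V_K(1)=1$, and the value $V_K(e^{2\pi i/3})=1$, which is the case $c(L)=1$ of the Lickorish--Millett evaluation $V_L(e^{2\pi i/3})=(-1)^{c(L)-1}$ (see \cite{LM}). Since the span of a Laurent polynomial is unchanged under multiplication by a monomial, I would first replace $V_K$ by $P(t):=t^{-N}V_K(t)$ for a suitable $N\in\Z$, so that $P(t)=\sum_{k=0}^{s}a_k t^{k}$ with $a_0\neq0$, $a_s\neq0$, all $a_k\in\Z$, and $s=\spn(V_K)$. The monomial $t^{-N}$ equals $1$ at $t=1$ and has modulus $1$ at $\zeta:=e^{2\pi i/3}$, so $P$ still satisfies $P(1)=\sum_k a_k=1$ and $|P(\zeta)|=1$; I will also use $\zeta+\overline{\zeta}=-1$ and $\zeta^{2}=-1-\zeta$. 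Suppose toward a contradiction that $s\in\{1,2\}$.

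If $s=1$, then $P=a_0+a_1 t$ with $a_0 a_1\neq0$, and expanding gives $|P(\zeta)|^{2}=a_0^{2}-a_0a_1+a_1^{2}=(a_0+a_1)^{2}-3a_0a_1=1-3a_0a_1$. Hence $|P(\zeta)|=1$ forces $a_0a_1=0$, a contradiction.

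If $s=2$, then $P=a_0+a_1 t+a_2 t^{2}$ with $a_0 a_2\neq0$; substituting $\zeta^{2}=-1-\zeta$ yields $P(\zeta)=x+y\zeta$ with $x=a_0-a_2$ and $y=a_1-a_2$, so $x^{2}-xy+y^{2}=|P(\zeta)|^{2}=1$. This is the norm form of the Eisenstein integers $\Z[\zeta]$, so its integer solutions of value $1$ are exactly the six units, $(x,y)\in\{(\pm1,0),\,(0,\pm1),\,(1,1),\,(-1,-1)\}$. On the other hand $P(1)=1$ gives $x+y+3a_2=1$, so $x+y\equiv 1\pmod 3$; this leaves only $(1,0)$, $(0,1)$, and $(-1,-1)$, for which $a_2=(1-x-y)/3$ equals $0$, $0$, and $1$ respectively, and in the last case $a_0=x+a_2=0$. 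Each possibility contradicts $a_0 a_2\neq0$, so $s\neq2$, completing the argument.

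There is no serious obstacle beyond choosing the correct pair of evaluations. It is worth emphasizing that the evaluation at the primitive sixth root of unity $e^{\pi i/3}$ recorded in \eqref{dim equation}, even combined with $V_K(1)=1$ and the oddness of $\det(K)$, does not suffice: the Laurent polynomial $2-t$ meets all three of those constraints yet has span $1$. It is the value at a primitive cube root of unity that forces the span out of $\{1,2\}$, and everything else is a short finite check.
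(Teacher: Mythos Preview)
Your proof is correct and takes a genuinely different route from the paper's. The paper quotes Ganzell's theorem that $X_K(a)-1$ is divisible by $a^{12}-1$ and then bounds the span of $X_K$ by a short case analysis on the extreme exponents of the quotient. You instead work directly with the Jones polynomial and use only the two classical evaluations $V_K(1)=1$ and $V_K(e^{2\pi i/3})=1$, reducing the problem to a finite check on the Eisenstein norm form $x^{2}-xy+y^{2}=1$.

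The two arguments are closely related underneath: Ganzell's divisibility, after the change of variable $t=a^{-4}$, is essentially the congruence $V_K(t)\equiv 1\pmod{t^{3}-1}$, which is exactly the pair of evaluations you use. What your approach buys is self-containment (no need to cite Ganzell) and a transparent reason for why the primitive cube root is the right evaluation; your closing remark that the sixth-root evaluation from \eqref{dim equation} together with $V_K(1)=1$ and odd determinant would \emph{not} suffice (witness $2-t$) is a nice clarification. What the paper's approach buys is brevity and a single inequality that dispatches both spans at once without any unit enumeration in $\Z[\zeta]$.
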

\begin{proof}
We will show, equivalently, that there are no knots such that the normalized polynomial $X_K(a)$ has span $4$ or $8$. 
Take $K'$ to be the unknot, and let $K$ be any other knot. 
Ganzell showed that for any pair of knots $K, K'$, the difference of the polynomials $X_K(a) - X_{K'}(a)$ is divisible by $a^{12}-1$ \cite[Theorem 1]{ganzell}. 
This implies that $X_K(a)$ is of the form
\[
	X_K(a) = (c_M a^M + \cdots c_m a^m)(a^{12} - 1) + 1, 
\] 
which implies that the span of $X_K(a)$ is $\max \{12+M, 1\} - \min\{m, 1\}$. 
There are four cases. 
If $\spn X_K(a) = 12+M - m\geq 12$ or if $\spn X_K(a) = 0$, then clearly $\spn X_K(a)$ cannot be $4$ or $8$. 
In the case that  $\spn X_K(a) = 11+M$ is equal to $4$ or $8$, then $M = -7$ or $-3$, which contradicts that $M\geq m\geq 0$. 
In the case that $\spn X_K(a) = 1-m$ is equal to $4$ or $8$, then $m=-3$ or $-7$. Yet because $\max\{12+M, 1\}=1$, we have $M\leq -11$, which contradicts that $M\geq m\geq -7$. 
\end{proof}

\section{Hyperbolicity of quotient knot graphs}
\label{sec:hyperbolic}

\subsection{Hyperbolic graphs}
\label{sec:hyperbolic background}

Let $G$ be a graph with vertex set $V(G)$ and edge set $E(G)$. We only consider graphs that are connected, undirected, simple, and either finite or countably infinite. A connected graph $G$ can be endowed with a metric, making it into a metric space $(X_G, d)$ as follows (here, we follow the notation and conventions of \cite[Section 2.1]{JLM}). We first define a metric $d(v,w)$ on vertices $v, w \in V(G)$ as the minimum number of edges needed to connect $v$ to $w$, and implicitly assume that every edge in the graph has length 1. 
We then extend the distance to any pair of points $x,y$ on an edge $e\in E(G)$ by $d(x,y) = |x-y|$. 
The metric space $(X_G,d)$ can be considered a \emph{geodesic metric space}. 
In particular, the distance between any pair of points in $(X_, d)$ is realized by at least one rectifiable shortest path. 
Given three distinct points $x,y,z$ in $(X_G, d)$, a \emph{geodesic triangle} $\{[xy], [yz], [xz]\}$ is a triple of geodesics connecting the vertices of the triangle. 
For $\delta\geq 0$, we say that a geodesic triangle is $\delta$-thin if each edge is contained in a closed $\delta$-neighborhood of the union of the remaining two. 
The metric space $X_G$ is $\delta$-hyperbolic if every geodesic triangle in $X_G$ is $\delta$-thin, and we call $X_G$ Gromov hyperbolic (or just hyperbolic) if it is $\delta$-hyperbolic for some $\delta\geq0$. 
We will generally abuse notation and let $G$ (or $\mathcal{K}$) refer to both an abstract graph and a metric graph where the distance function is understood. 

Graphs provide many examples of hyperbolic metric spaces. 
\begin{example}
For example, any connected acyclic graph (i.e. a tree) is $\delta$-hyperbolic for all $\delta\geq0$. This is because in any geodesic triangle, one of the three edges will be equal to the union of the other two edges.
\end{example}

As another example, we observe the following:
\begin{lemma}
\label{diameter}
If $\diam(G)=d$, then $G$ is $ \frac{d}{2} $-hyperbolic.
\end{lemma}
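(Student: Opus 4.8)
The plan is to verify directly the definition of $\delta$-hyperbolicity with $\delta = \frac{d}{2}$, where $d = \diam(G)$ is the diameter of the metric graph $X_G$. The only idea needed is the elementary observation that any point on a geodesic segment lies within half the length of that segment of one of its two endpoints.

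First I would fix an arbitrary geodesic triangle $\{[xy], [yz], [xz]\}$ in $X_G$, and an arbitrary point $p$ on one of its sides, say $p \in [xy]$. Since $[xy]$ is a geodesic from $x$ to $y$, its length is $d(x,y)$ and $d(x,p) + d(p,y) = d(x,y)$. Because $x$ and $y$ are points of $X_G$, we have $d(x,y) \le \diam(G) = d$, and therefore at least one of the two numbers $d(x,p)$, $d(p,y)$ is at most $\frac{d}{2}$.

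Next I would use that the vertices of the triangle lie on its sides: $x \in [xz]$ and $y \in [yz]$. If $d(x,p) \le \frac{d}{2}$, then $d\bigl(p, [xz]\bigr) \le d(p,x) \le \frac{d}{2}$; if instead $d(p,y) \le \frac{d}{2}$, then $d\bigl(p, [yz]\bigr) \le \frac{d}{2}$. In either case $p$ lies in the closed $\frac{d}{2}$-neighborhood of $[xz] \cup [yz]$. As $p$ was arbitrary on $[xy]$, the side $[xy]$ is contained in that neighborhood, and the same argument applied to the other two sides shows the triangle is $\frac{d}{2}$-thin. Since the triangle was arbitrary, $X_G$ is $\frac{d}{2}$-hyperbolic.

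There is no genuine obstacle in this argument; the only point requiring a small amount of care is to use the convention fixed above that $\diam(G)$ refers to the diameter of the metric graph $X_G$ (so that $d(x,y) \le d$ holds for all points of $X_G$, not merely for vertices), which is exactly what makes the bound $d(x,y) \le d$ legitimate for the endpoints $x,y$ of an arbitrary geodesic side.
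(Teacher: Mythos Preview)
Your proof is correct and is exactly the argument the paper gives, only written out in full: the paper's proof is a single sentence stating that any $p\in[xy]$ lies in $N_{d/2}([xz]\cup[yz])$, which is precisely your observation that one of $d(p,x),d(p,y)$ is at most $d/2$ and that $x\in[xz]$, $y\in[yz]$. Your remark about interpreting $\diam(G)$ as the diameter of the metric space $X_G$ is appropriate and consistent with the paper's conventions.
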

\begin{proof}
Take a geodesic triangle with edges $[xy],[yz], [xz]$. Without loss of generality any point $p\in [xy]$ is contained in $N_{d/2}([xz]\cup[yz])$. 
\end{proof}

\subsection{Proofs of hyperbolicity for quotients of the Gordian and H(2)-Gordian graphs}

In order to prove Theorem \ref{Gamma beta}, we first observe that the indexing set will correspond with all natural numbers. 
\begin{lemma}
\label{betaDef}
The invariant $\beta$ takes all natural number values.
\end{lemma}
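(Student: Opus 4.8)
The plan is to show that $\beta$ is a surjection onto $\mathbb{N} = \{1, 2, 3, \dots\}$ by exhibiting, for each $n \geq 1$, an explicit knot $K$ with $\beta(K) = n$. The natural candidate is the connected sum of copies of the trefoil. Recall from Example \ref{tri trefoil} that $\tri(T(2,3)) = 9$, so $\beta(T(2,3)) = 2$, and from Example \ref{tri trefoil} (via equation \eqref{dim equation}) that $\beta(U) = 1$ for the unknot $U$. The connected-sum behavior is controlled by Lemma \ref{tri properties}\eqref{connected sum}, which states $\tri(L_1)\tri(L_2) = 3\tri(L_1 \# L_2)$; rewriting in terms of $\beta$ gives $\beta(L_1 \# L_2) = \beta(L_1) + \beta(L_2) - 1$.

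The key steps, in order, are: first, record that $\beta(U) = 1$, establishing that $1$ is attained. Second, let $K_n$ denote the connected sum of $n$ copies of the trefoil $T(2,3)$ (with $K_0 = U$). Third, induct on $n$ using $\beta(K_n) = \beta(K_{n-1}) + \beta(T(2,3)) - 1 = \beta(K_{n-1}) + 1$, with base case $\beta(K_0) = \beta(U) = 1$, to conclude $\beta(K_n) = n + 1$ for all $n \geq 0$. Fourth, observe that as $n$ ranges over $\{0, 1, 2, \dots\}$, the value $n+1$ ranges over all of $\mathbb{N}$, so every natural number is attained. Combined with the fact that $\beta$ is by definition a positive-integer-valued invariant (Lemma \ref{tri properties}\eqref{power of three}), this shows $\beta$ takes precisely the values in $\mathbb{N}$.

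I do not expect any serious obstacle here: the whole argument is an induction powered by the multiplicativity of the tricoloring number under connected sum, which is already quoted as Lemma \ref{tri properties}. The only point requiring a word of care is the base value $\beta(U) = 1$ — one should confirm this either from equation \eqref{dim equation} (since $\Sigma(U) = S^3$ has $H_1(\Sigma(U); \Z_3) = 0$, giving $\tri(U) = 3$ and $\beta(U) = 1$, consistent with the remark that $\tri(U) = 3$ in the text) or simply by noting the unknot has exactly three trivial tricolorings. After that, everything follows formally. An alternative that avoids the unknot as base case is to start the induction at $n = 1$ with $\beta(T(2,3)) = 2$ and note the trefoil connect-sums realize all integers $\geq 2$, then append $\beta(U) = 1$ separately; I would present whichever is cleaner in context, likely the version rooted at the unknot.
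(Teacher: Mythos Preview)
Your proposal is correct and is essentially identical to the paper's own proof: both observe $\beta(U)=1$, then use the connected sum of $n$ trefoils together with Lemma~\ref{tri properties}\eqref{connected sum} and $\tri(T(2,3))=9$ to conclude $\beta(\#^n T(2,3))=n+1$. The only difference is that you phrase the computation as an induction on $n$ using the additive reformulation $\beta(L_1\#L_2)=\beta(L_1)+\beta(L_2)-1$, whereas the paper applies the multiplicativity of $\tri$ directly to get $\tri(\#^n T(2,3))=3^{n+1}$ in one line.
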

\begin{proof}
First notice that $\beta=1$ for the unknot. Next, let $\#^n T(2, 3)$ be the connected sum of $n$ trefoil knots. Since $\tri(T(2, 3))=9$, then by Lemma \ref{tri properties}\eqref{connected sum}, $\tri(\#^n T(2,3)) = 3^{n+1}$. So $\beta = n+1$ for $\#^n T(2, 3)$. 
\end{proof}
In section \ref{knot invariants} we discussed that the tricoloring invariant $\beta$ is related to the first homology of the branched double cover with $\mathbb{Z}/3\mathbb{Z}$-coefficients, and that these invariants give a lower bound on H(2)-Gordian distance. In particular, the H(2)-move and $\beta$ are compatible in the sense of Definition 1.2 of \cite{JLM}. The following statement can therefore be seen as a corollary of Theorem 5.1 of \cite{JLM}. However, we give a direct argument. 
\begin{theorem}
\label{Gamma beta}
The quotient graphs $\mathcal{QK}_{\backoverslash}^{\beta}$ and $\mathcal{QK}_{\smoothing}^{\beta}$ are $\delta$-hyperbolic for all $\delta$.
\end{theorem}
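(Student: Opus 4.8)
The plan is to show that both $\mathcal{QK}_{\backoverslash}^{\beta}$ and $\mathcal{QK}_{\smoothing}^{\beta}$ are isomorphic as metric graphs to the infinite ray $P_{\mathbb{N}}$, which is a tree and hence $\delta$-hyperbolic for every $\delta \geq 0$ by the tree example above. First I would identify the vertex set: by Lemma \ref{betaDef}, $\beta$ realizes every value in $\{1, 2, 3, \dots\}$, so the vertices of each quotient graph are in bijection with $\mathbb{N}$ via $[K] \mapsto \beta(K)$. It remains to pin down the edge set exactly.

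Next I would establish which pairs $\{m, n\}$ span an edge. The upper bound on edge length comes from Proposition \ref{beta bound}: if $[K]$ and $[K']$ are joined by an edge in $\mathcal{QK}_{\backoverslash}^{\beta}$ or $\mathcal{QK}_{\smoothing}^{\beta}$, then there are representatives related by a single crossing change (resp.\ H(2)-move), so $|\beta(K) - \beta(K')| \leq 1$. Hence the only possible edges are of the form $\{n, n+1\}$ (recall self-loops are discarded). For the reverse direction I would exhibit, for each $n$, an explicit crossing change (resp.\ H(2)-move) realizing the edge $\{n, n+1\}$. The connected sums $\#^{n}T(2,3)$ from the proof of Lemma \ref{betaDef} do the job: adjoining one more trefoil summand increases $\beta$ by exactly $1$, and $\#^{n+1}T(2,3)$ is related to $\#^{n}T(2,3)$ by a single crossing change (a crossing change at one crossing of the extra trefoil summand unknots it, since $u(T(2,3))=1$, and this is likewise an H(2)-move since $d_2(J\# T(2,n), J) = 1$ as noted in Section \ref{subsec:knotgraphs}). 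This produces every edge $\{n, n+1\}$ in both quotient graphs.

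Combining the two bounds, the edge set of each quotient graph is exactly $\{\,\{n, n+1\} : n \geq 1\,\}$, so each is isomorphic to $P_{\mathbb{N}}$. Since $P_{\mathbb{N}}$ is acyclic, every geodesic triangle degenerates (one side is the union of the other two), so the graph is $\delta$-hyperbolic for all $\delta \geq 0$, which is the claim.

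I do not expect a serious obstacle here; the statement is essentially a repackaging of Lemma \ref{betaDef} and Proposition \ref{beta bound} together with the trefoil-connect-sum construction. The one point requiring a little care is confirming that the single move between $\#^{n}T(2,3)$ and $\#^{n+1}T(2,3)$ is simultaneously a valid crossing change (for $\mathcal{QK}_{\backoverslash}^{\beta}$) and a valid component-preserving H(2)-move (for $\mathcal{QK}_{\smoothing}^{\beta}$), so that the same family of witnesses handles both graphs at once; this follows from the remarks in Section \ref{subsec:knotgraphs} on the twist and torus families, but it is worth stating explicitly. As an alternative, one could simply invoke Lemma \ref{diameter}, except that these graphs have infinite diameter, so the structural identification with $P_{\mathbb{N}}$ (or a direct appeal to \cite[Theorem 5.1]{JLM} via compatibility of $\beta$ with both moves) is the cleanest route.
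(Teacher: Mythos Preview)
Your proposal is correct and follows essentially the same approach as the paper: identify the vertex set via Lemma \ref{betaDef}, use Proposition \ref{beta bound} to rule out all edges except $\{n,n+1\}$, realize those edges with connected sums of trefoils (which works for both the crossing-change and H(2) versions), and conclude hyperbolicity from the fact that $P_{\mathbb{N}}$ is a tree. The paper's proof is slightly terser but otherwise identical in structure and ingredients.
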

\begin{proof}
We will prove that $\mathcal{QK}_{\backoverslash}^{\beta}$ is an infinite path graph $P_{\mathbb{N}}$. From Lemma \ref{betaDef}, we see that the invariant $\beta$ realizes all natural numbers, and so the vertex set of $\mathcal{QK}_{\backoverslash}^{\beta}$ is $\mathbb{N}$. Using the same family of connected sums of trefoils and the fact that the trefoil is related to the unknot by a crossing change, then there exists an edge between $\#^n T(2, 3)$ and $\#^{n+1}T(2, 3)$ for all $n \ge 1$.  By Proposition \ref{beta bound}, we also know that a crossing change cannot change $\beta$ by more than one, therefore no other edges can exist in this graph. Thus $\mathcal{QK}_{\backoverslash}^{\beta}$   is an infinite path graph  $P_{\mathbb{N}}$. Since  $P_{\mathbb{N}}$ is connected and acyclic, $\mathcal{QK}_{\backoverslash}^{\beta}$  is $\delta$-hyperbolic for all $\delta$. 

The proof for $\mathcal{QK}_{\smoothing}^{\beta}$ is identical after noticing that the trefoil knot is also unknotted with a single H(2)-move. 
\end{proof}

\begin{theorem}
\label{Gamma det}
	The quotient graphs $\mathcal{QK}_{\backoverslash}^{\det}$ and  $\mathcal{QK}_{\smoothing}^{\det}$ are $\delta$-hyperbolic for all $\delta\geq1$.
\end{theorem}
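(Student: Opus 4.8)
The plan is to show that both quotient graphs have finite diameter, from which Lemma \ref{diameter} immediately gives $\delta$-hyperbolicity for $\delta \geq \diam/2$, and then to check that the diameter is small enough that $\delta = 1$ suffices. The vertex set of $\mathcal{QK}_{\backoverslash}^{\det}$ is the set of equivalence classes indexed by the possible values of $\det(K)$ for a knot $K$; since the determinant of a knot is always odd and every odd positive integer $n$ is realized (for instance by $T(2,n)$, which has $\det(T(2,n)) = n$, together with the unknot for $n=1$), the vertex set is naturally identified with the odd positive integers. The same indexing works for $\mathcal{QK}_{\smoothing}^{\det}$.

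The key step is to exhibit, for every odd $n \geq 1$, an edge between the class $[n]$ and the class $[1]$ (the unknot's class). For this I would use the fact, recalled in the excerpt, that $d(J \# K(2,n), J) = 1$ for any knot $J$ and that $\det(K\# K') = \det(K)\det(K')$. Concretely, pick a knot $J_n$ with $\det(J_n) = n$; then $J_n$ and the unknot are \emph{not} in general related by one crossing change, so instead I would look for a knot with determinant $n$ that is one crossing change from a knot of determinant $1$. A cleaner route: the twist knot $K(2,m)$ has determinant depending on $m$ (it is $2m+1$ or similar up to sign), and $K(2,m)$ is related to the unknot by a single crossing change; so the class containing $K(2,m)$, which is $[\det(K(2,m))]$, is adjacent to $[1]$. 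As $m$ ranges over the integers, $\det(K(2,m))$ ranges over all odd numbers of the appropriate form, and for the remaining odd values one can take connected sums $K(2,m) \# U' $ or use other one-crossing-change families (e.g. pretzel or torus knots) to fill in the gaps. Thus every vertex is adjacent to $[1]$, so $\diam(\mathcal{QK}_{\backoverslash}^{\det}) \leq 2$, and Lemma \ref{diameter} gives $1$-hyperbolicity. For $\mathcal{QK}_{\smoothing}^{\det}$ the argument is the same using H(2)-moves: $T(2,n)$ is one H(2)-move from the unknot (as stated in the excerpt) and $\det(T(2,n)) = n$, but $T(2,n)$ is a link when $n$ is even and a knot when $n$ is odd, so for odd $n$ the class $[n]$ contains $T(2,n)$ and is adjacent to $[1]$ via the H(2)-move unknotting $T(2,n)$; hence $\diam(\mathcal{QK}_{\smoothing}^{\det}) \leq 2$ as well.

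The main obstacle is the bookkeeping needed to guarantee that \emph{every} odd positive integer actually occurs as the determinant of a knot that sits one crossing change (resp. one H(2)-move) away from a knot of determinant $1$. For the H(2) case this is essentially free because $\det(T(2,n)) = n$ hits every odd $n$ and $T(2,n)$ unknots in one H(2)-move. For the crossing-change case one must be slightly more careful about which odd integers are realized by a single standard family; the fix is to note that if $K$ unknots in one crossing change then so does $K \# K_0$ relates to $K_0$ in one crossing change for any $K_0$, but to connect $[n]$ to $[1]$ directly one wants a determinant-$n$ knot adjacent to a determinant-$1$ knot — this is supplied by observing that the unknotting-number-one knots have determinants realizing all odd values (a standard fact, e.g. via twist knots and their connected sums with determinant-$1$ knots like the untwisted double, or simply by citing that every odd integer is the determinant of some unknotting-number-one knot). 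Once adjacency of every vertex to $[1]$ is in hand, the diameter bound and Lemma \ref{diameter} finish the proof; I would also remark that the diameter is exactly $2$ (not $1$) since these graphs are not complete, which is why the hyperbolicity constant is $1$ rather than $1/2$.
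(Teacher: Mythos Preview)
Your approach is essentially the same as the paper's: show that every vertex is adjacent to $[1]$, conclude that the diameter is at most $2$, and invoke Lemma~\ref{diameter}. The paper uses twist knots $K(n,2)$ for the crossing-change graph and torus knots $T(2,2n+1)$ for the H(2)-graph, exactly as you do.

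The one place where your write-up is weaker than it needs to be is the hedging about whether twist knots realize \emph{every} odd determinant. They do: $\det(K(2,m)) = 2m+1$, so as $m$ ranges over the nonnegative integers you hit every odd positive integer, and no ``filling in the gaps'' with connected sums or appeals to a general fact about unknotting-number-one knots is required. You should simply state this computation and drop the surrounding discussion. Your closing remark that the diameter is exactly $2$ (and hence the hyperbolicity constant is $1$ rather than $1/2$) is not needed for the theorem as stated, and in any case Lemma~\ref{diameter} only gives an upper bound on the hyperbolicity constant, so exact diameter $2$ would not by itself rule out $1/2$-hyperbolicity.
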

\begin{proof}
We will first show that the quotient graph $\mathcal{QK}_{\backoverslash}^{\det}$ has $\diam(\mathcal{QK}_{\backoverslash}^{\det}) \le 2$. 
Note that the equivalence classes span the odd natural numbers, and that there exists a representative twist knot $K(n,2)$ for each equivalence class $[2n+1]$ for $n \geq 1$.
Then every equivalence class $[2n+1]$ is adjacent to $[1]$, because twist knots $K(n, 2)$ have unknotting number one.
Therefore $\text{diam}(\mathcal{QK}_{\backoverslash}^{\det}) \le 2$. By Lemma \ref{diameter} it is $\delta$-hyperbolic for all $\delta\geq1$.

For $\mathcal{QK}_{\smoothing}^{\det}$, we will similarly show that the quotient graph has diameter less than or equal two.  
Here, there exists a representative torus knot $T(2,2n+1)$ for each equivalence class $[2n+1]$ for $n \geq 1$.
Again, every equivalence class $[2n+1]$ is adjacent to $[1]$, because $T(2, 2n+1)$ torus knots have H(2)-unknotting number one. 
Therefore $\text{diam}(\mathcal{QK}_{\backoverslash}^{\det}) \le 2$. Again by Lemma \ref{diameter}, it is $\delta$-hyperbolic for all $\delta\geq1$.
\end{proof}

Recall from Lemma \ref{span bound} that there are no knots whose Jones polynomials have span $1$ or span $2$. Thus we define $\widetilde{\mathcal{QK}}_{\backoverslash}^{\spn}$ to be $\mathcal{QK}_{\backoverslash}^{\text{span}} - \{ [1],[2] \}$, i.e. the quotient Gordian knot graph under span with vertex set indexed by the integers $\{0, 3, 4, 5, \dots \}$. 
\begin{theorem}
\label{Gamma span}
	The quotient graph $\widetilde{\mathcal{QK}}_{\backoverslash}^{\spn}$ is $\delta$-hyperbolic for all $\delta\geq1$.
\end{theorem}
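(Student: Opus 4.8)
The plan is to show that $\widetilde{\mathcal{QK}}_{\backoverslash}^{\spn}$ has finite diameter, and then apply Lemma \ref{diameter}. This mirrors the strategy used for $\mathcal{QK}_{\backoverslash}^{\det}$ and $\mathcal{QK}_{\smoothing}^{\det}$ in Theorem \ref{Gamma det}: rather than analyzing geodesic triangles directly, it suffices to bound the distance between any vertex and a fixed basepoint, since a graph of diameter $d$ is $\tfrac{d}{2}$-hyperbolic.

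First I would fix a convenient basepoint, the equivalence class $[0]$ of the unknot. The vertex set is indexed by $\{0\} \cup \{n : n \geq 3\}$, since by Lemma \ref{span bound} spans $1$ and $2$ are not realized by any knot, while by Corollary \ref{torus bracket} the torus knot $T(2,n)$ has span $n$ for each odd $n \geq 3$, and by Lemma \ref{knots Kq} the knot $K_q$ has span $q+3$ for each odd $q \geq 1$ — so together these realize every integer $\geq 3$, and $0$ is realized by the unknot. The key step is to exhibit, for each such value, a representative knot that is close to the unknot in the Gordian graph. The twist knots give this cheaply: $K(2,n)$ has unknotting number one, so $d(K(2,n), U) = 1$, and more generally $d(J \# K(2,n), J) = 1$ for any knot $J$ (as recalled in the text). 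Using connected sums, for any target span $s \geq 3$ one can find an alternating knot $J$ of crossing number $s$ (hence span $s$ by the Kauffman–Murasugi–Thistlethwaite theorem) that unknots in a bounded number of crossing changes; even more directly, I would look for a single representative in each span class whose unknotting number is uniformly bounded. For instance, the $T(2,n)$ torus knots have unknotting number $(n-1)/2$, which is \emph{not} bounded, so those are the wrong representatives — instead I would use twist knots $K(2,n)$, whose span is $n+2$ and unknotting number is $1$, handling all even spans $\geq 4$; for odd spans $\geq 3$ I would use a knot such as $K(2,n) \# T(2,3)$ (span $n+2+3 = n+5$, and related to $K(2,n)$ by one crossing change, hence distance $2$ from $U$), or more uniformly the knots $K_q$ if their unknotting numbers can be bounded. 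The cleanest route is: every span class $s \geq 3$ contains a representative at Gordian distance at most some absolute constant $C$ from the unknot. Then the diameter of $\widetilde{\mathcal{QK}}_{\backoverslash}^{\spn}$ is at most $2C$, and Lemma \ref{diameter} gives $\delta$-hyperbolicity for all $\delta \geq C$.

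The main obstacle is ensuring the \emph{uniform} bound on the distance from the basepoint: one must choose, for each span value, a representative knot whose unknotting number (or Gordian distance to the unknot) does not grow with the span. The twist knots $K(2,n)$ solve this for all spans of the form $n+2$ — that is, all integers $\geq 4$ — in a single step, and span $3$ is realized by the trefoil $T(2,3)$, which also has unknotting number one. So in fact every vertex is adjacent to $[1]$... but wait, $[1]$ is not a vertex here; rather every vertex other than $[0]$ is adjacent to $[3]$ or to $[4]$, and these in turn are adjacent to $[0]$ via $T(2,3)$ (unknotting number $1$, so a crossing change changing its span from $3$ to $0$) — actually one must verify a crossing change genuinely connects these classes, which follows since $T(2,3)$ and $U$ have diagrams related by one crossing change. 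Thus $\diam(\widetilde{\mathcal{QK}}_{\backoverslash}^{\spn}) \leq 3$ (via $[s] \to [3 \text{ or } 4] \to [0] \to [\text{anything}]$, or more carefully a direct count), and I would conclude $\delta$-hyperbolicity for all $\delta \geq 1$ exactly as stated, with the one subtlety being to double-check the small-index adjacencies ($[3]$ adjacent to $[4]$? $[3]$ adjacent to $[0]$?) so the diameter bound is tight enough to give the claimed $\delta \geq 1$ rather than a weaker constant.
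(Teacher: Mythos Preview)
Your approach is the same as the paper's: bound the diameter and invoke Lemma \ref{diameter}. However, your execution is tangled where the paper's is a single sentence. The twist knots $K(2,n)$ are knots for \emph{every} positive integer $n$ (since one parameter is even), and by Lemma \ref{generalized twist bracket} the span of $K(2,n)$ is $n+2$. Thus as $n$ ranges over $\{1,2,3,\dots\}$ these knots realize \emph{every} span $\geq 3$, not just the even ones; there is no need to treat odd spans separately via connected sums or the knots $K_q$. Since each $K(2,n)$ has unknotting number one, every vertex $[s]$ with $s\geq 3$ is adjacent to $[0]$, giving $\diam(\widetilde{\mathcal{QK}}_{\backoverslash}^{\spn}) \leq 2$ directly.

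This matters for the stated constant: your detour yields only $\diam \leq 3$, and Lemma \ref{diameter} then gives $\delta$-hyperbolicity for $\delta \geq 3/2$, not the claimed $\delta \geq 1$. You notice this gap yourself at the end, but the fix is not to ``double-check small-index adjacencies'' after the fact --- it is to observe from the start that the single family $K(2,n)$ already connects every nonzero vertex to $[0]$.
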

\begin{proof}
We claim that $\diam(\widetilde{\mathcal{QK}}_{\backoverslash}^{\spn}) \leq 2$. This follows because every equivalence class $[n]$, for $n\geq 3$, is adjacent to $[0]$. This adjacency is realized by the twist knots $K(m,2)$, which have span $m+2$. The result then follows from Lemma \ref{diameter}. 
\end{proof}

We similarly define $\widetilde{\mathcal{QK}}_{\smoothing}^{\text{span}}$ to be $\mathcal{QK}_{\smoothing}^{\text{span}} - \{ [1],[2] \}$, understood to have vertex set index by $\{0, 3, 4, 5, \dots \}$. 
\begin{theorem}
\label{Gamma 2 span}
	The quotient graph $\widetilde{\mathcal{QK}}_{\smoothing}^{\spn}$ is $\delta$-hyperbolic for all $\delta\geq1$.
\end{theorem}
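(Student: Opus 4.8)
The plan is to mimic the strategy used for $\widetilde{\mathcal{QK}}_{\backoverslash}^{\spn}$ in Theorem \ref{Gamma span}: show that the quotient graph has diameter at most $2$, then invoke Lemma \ref{diameter} to conclude that it is $\delta$-hyperbolic for all $\delta \geq 1$. Concretely, I would exhibit, for every $n \geq 3$, a knot of span $n$ that is related to a knot of span $0$ (i.e.\ the unknot) by a single component-preserving H(2)-move, which produces an edge $\{[n],[0]\}$ in $\widetilde{\mathcal{QK}}_{\smoothing}^{\spn}$. Since $[0]$ is then adjacent to every other vertex, any two vertices are joined by a path of length at most $2$ through $[0]$, so $\diam(\widetilde{\mathcal{QK}}_{\smoothing}^{\spn}) \leq 2$.

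The natural candidates for the span-$n$ representatives are the generalized twist knots. By Lemma \ref{generalized twist bracket}, the link $K(q,p)$ has span $q+p$, and $K(q,p)$ is a knot as long as $q$ and $p$ are not both odd. Figure \ref{fig:twist-torus-operations}(b) records the H(2)-move $d_2(K(m,n),K(m-1,n))=1$ and Figure \ref{fig:twist-torus-operations}(c) records $d_2(K(m,n),T(2,m))=1$; combined with $d_2(T(2,m),U)=1$ from Figure \ref{fig:torus-operations}, one sees that each generalized twist knot lies within H(2)-distance $2$ of the unknot, and in fact a single H(2)-move of the type in Figure \ref{fig:twist-torus-operations} can be chosen so as to realize an edge directly to a low-span class. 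The cleanest route is to use the twist knots $K(m,2)$ (with $m\geq1$), which are genuine knots of span $m+2$, exactly as in Theorem \ref{Gamma span}; one then needs a single H(2)-move from $K(m,2)$ to a knot of span $0$, $3$, $4$, or $5$ — any fixed vertex will do, since the argument only needs a common neighbor. Using the move $K(m,2)\leftrightsquigarrow K(m-1,2)$ repeatedly would only show connectivity, not a diameter-$2$ bound, so instead I would use the move to $T(2,m)$ or to the unknot directly: this gives $[m+2]$ adjacent to a fixed small-span class for every $m\geq1$, hence every $[n]$ with $n\geq3$ has a common neighbor, and the diameter is at most $2$.

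The main obstacle is bookkeeping about which of these H(2)-moves is component-preserving and lands on an \emph{knot} (as opposed to a two-component link) of the asserted span, since generalized twist links are sometimes knots and sometimes links depending on parities; one must pick the parities of the twist parameters so that both the source and target of the H(2)-move are knots while the span still runs over all integers $\geq 3$. This is exactly the kind of parity accounting already carried out in Section \ref{sec:knots}, and the twist knots $K(m,2)$ sidestep it because the parameter $2$ is even, so $K(m,2)$ is a knot for every $m$. Once a single valid family of edges $\{[n],[\ast]\}$ ($n\geq3$) to a common vertex $[\ast]$ is in hand, the diameter bound and Lemma \ref{diameter} finish the proof immediately; no explicit Kauffman bracket computation beyond Lemma \ref{generalized twist bracket} is required.
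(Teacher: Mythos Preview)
There is a genuine gap in the key step. Your entire argument rests on producing, for every $n\ge 3$, a component-preserving H(2)-move from a knot of span $n$ to a knot in one \emph{fixed} class $[\ast]$, and you never actually do this. The moves from Figure~\ref{fig:twist-torus-operations} applied to $K(m,2)$ land on $K(m-1,2)$ (span $m+1$) or on $T(2,m)$ (span $m$); neither target class is independent of $m$. The move that would hit a fixed class, namely $K(m,2)\leftrightsquigarrow T(2,2)$, produces the Hopf link and is therefore not component-preserving. Your sentence ``this gives $[m+2]$ adjacent to a fixed small-span class'' is precisely where the argument breaks: \emph{small} is not \emph{fixed}. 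In particular, taking $[\ast]=[0]$ would require, for each even $n\ge 4$, a knot of span $n$ with H(2)-unknotting number one; the paper does not know how to construct such knots and records exactly this as an open question after Remark~\ref{knot argument}.

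The paper's proof does not route everything through a single hub. It instead combines two families of edges: the torus knots $T(2,2k+1)$, which genuinely have H(2)-unknotting number one, supply the edges $\{[0],[2k+1]\}$; and a crossing resolution between consecutive twist knots supplies $\{[n],[n+1]\}$ for all $n\ge 3$. The first family is exactly what your $K(m,2)$ representatives cannot deliver, and the second is what lets the even classes reach $[0]$ in two steps. If you want to salvage a hub-vertex argument, the hub cannot be $[0]$; you would have to take, say, $[\ast]=[3]$ and then build edges $\{[3],[n]\}$ for all $n\ge 4$ by explicit knot pairs---essentially the case analysis carried out later in Theorem~\ref{Gamma2 span structure}, which goes well beyond Lemma~\ref{generalized twist bracket}.
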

\begin{proof}
We claim that $\diam(\widetilde{\mathcal{QK}}_{\smoothing}^{\spn}) \leq 2$. To see this, first observe that every equivalence class $[2n+1]$, for odd $n\geq 1$, is adjacent to $[0]$. This adjacency is realized by the $T(2, 2n+1)$ torus knots, which have span $2n+1$.  Additionally, for every $n\geq 3$, the class $[n]$ is adjacent to $[n+1]$. This adjacency is realized by the twist knots $K(n, 1)$ and $K(n+1, 2)$, which are related by the resolution of a single crossing. The result then follows from Lemma \ref{diameter}.
\end{proof}

\begin{remark}
It is possible to construct distinct pairs of knots related by a crossing change that share the invariants considered here. For example, this could be done with pretzel knots of the form $P(q, -1, -q)$ for $q\geq 3$ odd. The knot $P(q, -1, -q)$ is related to $P(q, 1, -q)$ by a crossing change. Because $P(q, 1, -q)\simeq P(-q, 1, q)$ is the mirror of $P(q, -1, -q)$, their Jones polynomials have the same span, and the values of $\beta$ and $\det$ will agree. However, here we consider only simple knot graphs (ignoring self edges). 
\end{remark}

\section{Graph isomorphism type}
\label{sec:graphs}

In this section we study the possible isomorphism types of the graphs $\mathcal{QK}_{\backoverslash}^p$ and $\mathcal{QK}_{\smoothing}^p$ for $p=\beta, \det$ and $\spn$. In the case of the invariant $\beta$, the isomorphism types are easily identified.

\begin{theorem}
Each of $\mathcal{QK}_{\backoverslash}^{\beta}$ and $\mathcal{QK}_{\smoothing}^{\beta}$ are isomorphic to the path graph $P_{\mathbb{N}}$. 
\end{theorem}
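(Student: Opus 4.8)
The plan is to show that both quotient graphs are not merely hyperbolic but literally equal, as abstract graphs, to the infinite path graph $P_{\mathbb{N}}$, and this has essentially already been carried out inside the proof of Theorem \ref{Gamma beta}. So the proof here is a short recapitulation and bookkeeping argument. First I would recall from Lemma \ref{betaDef} that $\beta$ realizes precisely the set $\mathbb{N}$ of positive integers, so the vertex set of $\mathcal{QK}_{\backoverslash}^{\beta}$ (and of $\mathcal{QK}_{\smoothing}^{\beta}$) is canonically identified with $\mathbb{N}$, which is exactly the vertex set of $P_{\mathbb{N}}$.

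Next I would establish the edge set. For the upper bound on edges, invoke Proposition \ref{beta bound}: any two knots related by a single crossing change (respectively a single H(2)-move) have values of $\beta$ differing by at most one, so in either quotient graph the only possible edges are of the form $\{[n],[n+1]\}$. For the lower bound, exhibit those edges explicitly: the family $\#^n T(2,3)$ of connected sums of trefoils satisfies $\beta(\#^n T(2,3)) = n+1$ (by Lemma \ref{tri properties}\eqref{connected sum} as in Lemma \ref{betaDef}), and since a single crossing change unknots the trefoil, $\#^{n+1}T(2,3)$ and $\#^{n}T(2,3)$ have diagrams related by one crossing change; hence $\{[n],[n+1]\}$ is an edge for every $n\geq 1$. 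The identical family works for H(2)-moves because the trefoil is also unknotted by a single H(2)-move. Therefore the edge set is exactly $\{\{n,n+1\}\mid n>0\}$, which is the edge set of $P_{\mathbb{N}}$, and the identity map on $\mathbb{N}$ is a graph isomorphism $\mathcal{QK}_{\backoverslash}^{\beta}\cong P_{\mathbb{N}}$ and likewise $\mathcal{QK}_{\smoothing}^{\beta}\cong P_{\mathbb{N}}$.

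There is no real obstacle here; the only point requiring the slightest care is making sure the vertex-set identification is with the positive integers (matching the indexing convention for $P_{\mathbb{N}}$ fixed before Theorem \ref{beta structure intro}), and that the self-loops at each vertex coming from trivial crossing changes or H(2)-moves are discarded, consistent with the standing convention that these knot graphs are simple. I would phrase the proof as essentially ``this was shown in the course of proving Theorem \ref{Gamma beta},'' restating the construction of the trefoil-sum edges and the $\beta$-bound for completeness.
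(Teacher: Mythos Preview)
Your proposal is correct and matches the paper's approach exactly: the paper's proof simply cites the proof of Theorem \ref{Gamma beta} to conclude that the vertex sets are $\mathbb{N}$ and the edge sets are precisely $\{\{[n],[n+1]\}\mid n\in\mathbb{N}\}$, which is just what you outline. Your recapitulation of the trefoil-sum construction and the $\beta$-bound is faithful to (indeed slightly more detailed than) the paper's one-sentence proof.
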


\begin{proof} 
It follows immediately from the proof of Theorem \ref{Gamma beta} that the vertex sets $V(\mathcal{QK}_{\backoverslash}^{\beta}) = \mathbb{N}$ and $V(\mathcal{QK}_{\smoothing}^{\beta}) = \mathbb{N}$, and that the edge sets $E(\mathcal{QK}_{\backoverslash}^{\beta}) = \{ [n], [n+1] \}$ and $E(\mathcal{QK}_{\smoothing}^{\beta}) = \{ [n], [n+1] \}$ for all $n\in\mathbb{N}$.  
\end{proof}
Unlike the quotient knot graphs for $\beta$, the graphs $\mathcal{QK}_{\backoverslash}^{\det}$ and $\mathcal{QK}_{\smoothing}^{\det}$ have finite diameter. We can exploit the fact that the determinant is multiplicative over connected sums in order to obtain the following two statements. By $\mathbb{K}_{\infty}$, we mean the complete graph on countably many vertices. 

\begin{theorem}\label{complete subgraph}
In both $\mathcal{QK}_{\backoverslash}^{\det}$ and $\mathcal{QK}_{\smoothing}^{\det}$ , the subgraph induced by the set of vertices $\{[n^k], k\geq 0\}$ is isomorphic to $\mathbb{K}_{\infty}$ for all odd $n\geq3$.
\end{theorem}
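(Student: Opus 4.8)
The plan is to show that for fixed odd $n \geq 3$, the vertices $[n^k]$, $k \geq 0$, are pairwise adjacent in the quotient graph, and that these vertices are genuinely distinct. Distinctness is immediate: $n^k = n^{k'}$ forces $k = k'$ since $n \geq 3$, so the determinant values $\{n^k : k \geq 0\}$ are all different, hence the equivalence classes $[n^k]$ are distinct vertices. The substantive part is exhibiting an edge between $[n^j]$ and $[n^k]$ for every pair $j < k$. The idea is to use the multiplicativity of the determinant under connected sum together with the fact, recorded in the excerpt, that $d(J \# K(2,n), J) = 1$ for any knot $J$ (and $d_2(J \# T(2,n), J) = 1$ for the H(2) case).

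For the Gordian case, I would argue as follows. First choose, for each $i \geq 0$, a representative knot $J_i$ with $\det(J_i) = n^i$; for instance $J_i = \#^i T(2, n)$, the connected sum of $i$ copies of the $(2,n)$ torus knot, which has determinant $n^i$ by multiplicativity since $\det(T(2,n)) = n$ (note $n$ odd ensures this is a knot). Now fix $j < k$ and set $m = k - j \geq 1$. The knot $\#^{k} T(2,n) = \left(\#^{j} T(2,n)\right) \# \left(\#^{m} T(2,n)\right)$, and I claim it is related by a single crossing change to a knot of determinant $n^{j}$. Indeed, a single crossing change performed within one copy of $T(2,n)$ inside $\#^{m} T(2,n)$ can turn $T(2,n)$ into the unknot (since $u(T(2,n)) $ need not be one, but we only need \emph{one} crossing change to some knot of the right determinant — here one should instead use the twist-knot trick). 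The cleaner route: use $K(2, n-1)$, which satisfies $\det(K(2,n-1)) = n$ and $u(K(2,n-1)) = 1$ (twist knots have unknotting number one), so $\#^{k-1} T(2,n) \# K(2, n-1)$ has determinant $n^{k}$ and is related by one crossing change — undoing the twist knot — to $\#^{k-1} T(2,n)$, which has determinant $n^{k-1}$. This gives edges $\{[n^{k-1}], [n^{k}]\}$ for all $k \geq 1$. To get edges between non-consecutive powers $[n^j]$ and $[n^k]$, replace the twist-knot summand by a summand realizing $\det = n^{k-j}$: take $J$ with $\det(J) = n^{k-j}$ and $u(J) = 1$ — for example, pick a twist knot $K(2, \ell)$ with $\det = \ell + 1 = n^{k-j}$, which exists since $n^{k-j} - 1 \geq 2$ is even (as $n$ is odd). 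Then $\left(\#^{j} T(2,n)\right) \# K(2, \ell)$ has determinant $n^{j} \cdot n^{k-j} = n^{k}$, is equivalent (same determinant) to the vertex $[n^k]$, and a single crossing change undoing the twist knot yields a knot of determinant $n^{j}$, equivalent to $[n^{j}]$. Hence $\{[n^j],[n^k]\}$ is an edge. Since $j,k$ were arbitrary, the induced subgraph is complete.

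For the H(2)-Gordian case the argument is parallel: replace the twist-knot summand with a torus-link summand. For odd $n \geq 3$, $T(2,n)$ is a knot with $\det = n$ and H(2)-unknotting number one, so for any knot $J$, $d_2(J \# T(2,n), J) = 1$. To pass between $[n^j]$ and $[n^k]$, note that $\left(\#^{j} T(2,n)\right) \# \left(\#^{k-j} T(2,n)\right)$ has determinant $n^k$, and a single H(2)-move unknotting one of the $T(2,n)$ summands — wait, we need a single H(2)-move reducing the determinant from $n^k$ to $n^j$, i.e. killing a factor of $n^{k-j}$; this is not achieved by unknotting a single $T(2,n)$ unless $k - j = 1$. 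So again take instead a \emph{single} summand $J$ with $\det(J) = n^{k-j}$ and $d_2(J, U) = 1$; a natural candidate is $T(2, n^{k-j})$ itself, which has determinant $n^{k-j}$ and H(2)-unknotting number one. Then $\left(\#^{j} T(2,n)\right) \# T(2, n^{k-j})$ has determinant $n^{j} \cdot n^{k-j} = n^{k}$, represents $[n^k]$, and one H(2)-move (unknotting the $T(2, n^{k-j})$ summand) produces a knot of determinant $n^{j}$, representing $[n^{j}]$. This realizes the edge $\{[n^j],[n^k]\}$, completing the H(2)-case.

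The main obstacle I anticipate is not any single hard computation but rather the bookkeeping of choosing the right auxiliary summand: one must be careful that a \emph{single} crossing change or H(2)-move changes the determinant by exactly the multiplicative factor $n^{k-j}$, which forces the use of one summand of determinant $n^{k-j}$ and unknotting number (resp.\ H(2)-unknotting number) one, rather than $k-j$ separate summands. The existence of such summands is where the hypotheses "$n \geq 3$ odd" and the known families (twist knots $K(2,\ell)$ with all odd determinants $\geq 3$, torus knots $T(2,m)$ with H(2)-unknotting number one) do the work. Once that is set up, multiplicativity of the determinant and the single-move reductions quoted in the excerpt finish the proof mechanically.
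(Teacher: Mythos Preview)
Your approach is correct and matches the paper's: use multiplicativity of $\det$ under connected sum together with a single removable summand of determinant $n^{k-j}$ (a twist knot for $\mathcal{QK}_{\backoverslash}^{\det}$, a torus knot for $\mathcal{QK}_{\smoothing}^{\det}$) to realize each edge $\{[n^j],[n^k]\}$; the paper simply takes $K(2,\tfrac{n^i-1}{2}) \# K(2,\tfrac{n^{j-i}-1}{2})$ and the analogous torus-knot sum rather than your $\#^j T(2,n)$ base, which is immaterial. One minor correction: under the paper's conventions $\det(K(2,\ell))=2\ell+1$, not $\ell+1$, so the twist knot you want is $K\bigl(2,\tfrac{n^{k-j}-1}{2}\bigr)$ --- your parity observation that $n^{k-j}-1$ is even is exactly what makes this parameter an integer.
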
	
	
\begin{proof}
Let $n$ be positive and odd. 
In $\mathcal{QK}_{\backoverslash}^{\det}$, any vertex $[n^i]$ may be realized by the twist knot $K(2, \frac{n^i-1}{2})$ because $\det(K(2,\frac{n^i-1}{2}))=n^i$. 
Any such twist knot is related to the unknot, which has $\det(U)=1$, establishing the edges $\{[n^0], [n^i]\}$. 
Next for $n\geq 3$, consider any two vertices $[n^i], [n^j]$, with $i<j$, in the set $\{[n^k], k\geq 1\}$. 
The vertex $[n^j]$ is realized by a connected sum of twist knots, with
\[
	\det (K(2, \frac{n^i-1}{2}) \# K(2, \frac{n^{j-i}-1}{2}) )= n^i \cdot n^{j-i} = n^j.
\]
The connected sum $K(2, \frac{n^i-1}{2}) \# K(2, \frac{n^{j-i}-1}{2}) $ is related to $K(2, \frac{n^i-1}{2})$ by a single crossing change in the clasp of $K(2, \frac{n^{j-i}-1}{2})$ which unknots that summand. 
This gives the edge $\{[n^i], [n^j]\}$. 
Thus all vertices $\{ [n^k]; n\geq 3, k\geq 0 \}$ are adjacent, and so the induced subgraph is isomorphic to $\mathbb{K}_{\infty}$. 

In $\mathcal{QK}_{\smoothing}^{\det}$, the proof is similar, except that we use $T(2, 2n+1)$ torus knots rather than twist knots. 
Since $\det(T(2, 2n+1)) = 2n+1$ and $T(2, 2n+1)$ is related to the unknot by a single H(2)-move, this gives the edges $\{[n^0], [n^i]\}$. 
For the edges $\{[n^i], [n^j]\}$, with $n\geq3, j>i\geq1$, we take the connected sum 
\[
	T(2, \frac{ n^i-1}{2} ) \# T(2, \frac{n^{j-i}-1}{2}), 
\]
which has determinant $n^j$, and is related to $T(2, \frac{n^i-1}{2})$ of determinant $n^i$ by the single H(2)-move which unknots the second summand.
\end{proof}

\begin{example}
\label{edges with det}
It is certainly possible to construct additional edges in the graphs $\mathcal{QK}_{\backoverslash}^{\det}$ and $\mathcal{QK}_{\smoothing}^{\det}$, and we describe here three examples. 
\begin{enumerate}[label=(\roman*)]
\item By an argument similar to that of Theorem \ref{complete subgraph}, there exist edges in $\mathcal{QK}_{\backoverslash}^{\det}$ and $\mathcal{QK}_{\smoothing}^{\det}$ for any pair of vertices $[n], [m]$, with $n,m$ odd and where $m$ divides $n$. In this case, let $n = dm$. Then for $\mathcal{QK}_{\backoverslash}^{\det}$, the relevant crossing change is found in the second summand of
\[
K(2, \frac{ m-1}{2} ) \# K(2, \frac{d-1}{2} ),
\]
and for $\mathcal{QK}_{\smoothing}^{\det}$ the relevant H(2)-move is found in the second summand 
\[
T(2, \frac{ m-1}{2} ) \# T(2, \frac{d-1}{2} ).
\]

\item In  $\mathcal{QK}_{\backoverslash}^{\det}$, the edges $\{[2n+1],[2n+5]\}$ for $n\geq 1$ are realized by the pair of twist knots $K(2,n)$, $K(2, n+2)$. In the graph $\mathcal{QK}_{\smoothing}^{\det}$, the edges $\{[2n+1],[2n+3]\}$ for $n\geq 1$ are again realized by pairs of twist knots $K(2,n)$, $K(2, n+1)$.

\item Consider the $P(p,q,r)$ pretzel knots, where in order to obtain a knot we assume that at most one of $p,q,r$ is even. It is well known that the $P(p,q,r)$ pretzel knots have determinant $|pq+pr+qr|$. It is also easy to see that the knot $P(p,q,r)$ is related to $P(p,q-2,r)$ via a crossing change and is related to $P(p,q-1,r)$ via an H(2)-move. In particular, for any $p\geq1$, we have that $\det(P(p,3,1)) = 4p+3$ and $\det(P(p,2,1)) = 3p+2$ and $\det(P(p,1,1)) = 2p+1$. Hence, there exist edges in $\mathcal{QK}_{\backoverslash}^{\det}$ relating $2p+1$ and $4p+3$ for all $p\geq 1$ and there exist edges in $\mathcal{QK}_{\smoothing}^{\det}$ relating $2p+1$ and $3p+2$ for all $p\geq 1$. 
\end{enumerate}
\end{example}
One may work with other explicit examples to construct additional edges in these graphs. However, we know of no universal construction that produces a pair of knots realizing an arbitrary edge $\{[2n+1], [2m+1]\}$ for any $n, m$. Therefore, we ask:
\begin{question}
Are the graphs $\mathcal{QK}_{\backoverslash}^{\det}$ and $\mathcal{QK}_{\smoothing}^{\det}$ isomorphic to $\mathbb{K}_{\infty}$?
\end{question}

Next we turn our focus to the graphs $\widetilde{\mathcal{QK}}_{\backoverslash}^{\spn}$ and $\widetilde{\mathcal{QK}}_{\smoothing}^{\spn}$. 
Recall from Lemma \ref{span bound} that for knots the invariant $\spn$ takes values in $\{0,3,4,\dots\}$, therefore we remove $n=1,2$ from the indexing sets of $\widetilde{\mathcal{QK}}_{\backoverslash}^{\spn}$ and $\widetilde{\mathcal{QK}}_{\smoothing}^{\spn}$. 

\begin{proposition}
\label{Gamma span structure}
The edges $\{[0], [n]\}$ are in $E(\widetilde{\mathcal{QK}}_{\backoverslash}^{\spn})$ for all $n\geq 3$ and the edges 
$\{[m], [n]\}$ are in $E(\widetilde{\mathcal{QK}}_{\backoverslash}^{\spn})$ for all $m,n$, where $n-2\geq m\geq 3$. 
\end{proposition}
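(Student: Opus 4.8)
The plan is to realize every required edge by an explicit pair of knots related by a single crossing change, using only the generalized twist knots $K(q,p)$ and their connected sums. The two ingredients are Lemma~\ref{generalized twist bracket}, which gives $\spn(K(q,p)) = q+p$, and the multiplicativity of the Jones polynomial under connected sum, so that $\spn$ is additive over connected sums. I will also use the elementary facts, already noted in Section~\ref{sec:knots}, that a crossing change in the clasp of a twist knot summand $K(d,2)$ unknots that summand, and that $K(q,2)$ is a knot for every $q$ (since one twist parameter is even).

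For the edges $\{[0],[n]\}$ with $n\geq 3$, I would simply recall the argument from the proof of Theorem~\ref{Gamma span}: the twist knot $K(n-2,2)$ has span $(n-2)+2=n$ by Lemma~\ref{generalized twist bracket} and unknotting number one, so a crossing change in its clasp exhibits the edge $\{[0],[n]\}$.

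For the edges $\{[m],[n]\}$ with $n-2\geq m\geq 3$, I would split into two cases. When $n=m+2$, take $K(m-2,2)$ and $K(m,2)$; both are genuine knots, with spans $m$ and $m+2=n$ by Lemma~\ref{generalized twist bracket}. A single crossing change inside the length-$m$ twist region of $K(m,2)$ cancels a pair of crossings there (via a Reidemeister~II move), producing $K(m-2,2)$, so $\{[m],[n]\}$ is an edge. When $n\geq m+3$, set $d=n-m-2\geq 1$ and consider $K(m-2,2)\,\#\,K(d,2)$: by additivity of the span its span is $m+(d+2)=n$, and a crossing change in the clasp of the $K(d,2)$ summand unknots that summand, leaving $K(m-2,2)$ of span $m$. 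Hence $\{[m],[n]\}$ is again an edge. Since every span used is at least $3$, none of these equivalence classes collides with the excised values $1$ and $2$, so all the knots and vertices invoked genuinely lie in $\widetilde{\mathcal{QK}}_{\backoverslash}^{\spn}$.

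The computations here are routine; the one point that needs a little care is the case $n=m+2$. It is tempting to try to obtain it from the same connected-sum trick used for $n\geq m+3$, but that would require connect-summing with a knot of span $2$, which by Lemma~\ref{span bound} does not exist — this is exactly why the generalized twist knots $K(m,2)$, whose span drops by precisely $2$ under a crossing change in a twist region, are needed to bridge that gap. The only other thing to monitor is parity: keeping the second twist parameter equal to $2$ throughout guarantees that every diagram used is a knot and not a two-component link.
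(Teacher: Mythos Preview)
Your proof is correct and follows essentially the same approach as the paper's own argument: twist knots $K(n-2,2)$ realize the edges $\{[0],[n]\}$, the pair $K(m-2,2)$, $K(m,2)$ handles the case $n=m+2$, and the connected sum $K(m-2,2)\#K(n-m-2,2)$ handles $n\geq m+3$ by unknotting the second summand. The only cosmetic difference is that you write the twist knots with the parameters in the order $K(q,2)$ rather than the paper's $K(2,n)$, and you add the useful remark explaining why the $n=m+2$ case cannot be absorbed into the connected-sum construction.
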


\begin{proof}
The twist knots $K(2, n-2)$ have span $n$ for all $n\geq 3$. Because these all have unknotting number one, they realize the edges $\{[0],[n]\}$. Next, consider an arbitrary pair $[m],[n]$, where $m\geq 3$ and $n\geq m+2$. If $n=m+2$, then the twist knots $K(2, m)$ and $K(2, m-2)$ are related by a crossing change and have spans $n$ and $m$, respectively. If $n > m+2$, then $K(2, m-2)$ and the connected sum $K(2, m-2)\#K(2, n-m-2)$ have spans $m$ and $n$, respectively, because span is additive over connected sum. An edge relates this pair because there is a crossing change that unknots the second summand. These knots realize the edges $\{[m],[n]\}$ when $m\geq 3$ and $n\geq m+2$.
\end{proof}

Notice that Proposition \ref{Gamma span structure} nearly shows that the graph $\widetilde{\mathcal{QK}}_{\backoverslash}^{\spn}$ is isomorphic with $\mathbb{K}_{\infty}$. The only missing edges in the proof are those of the form $\{ [n], [n+1]\}$. We are in fact aware of some sporadic pairs of knots related by a crossing change whose spans differ by one (see for example \cite{Moon}.) These pairs are listed below and shown in Figure \ref{fig:span1}. 

\begin{itemize}
	\item The twist knot $K(2,3)=5_2$ (span 5) and the pretzel knot $P(3,-3,2)=8_{20}$ (span 6). 

	\item The pretzel $P(3,3,-2) =T(3,4)=8_{19}$ (span 5) and the pretzel knot $P(5,3,-2)$ (span 6).

	\item The pretzel $P(5,3,-2) = T(3,5) = 10_{124}$ (span 6) and $T(2, 7) =7_1$ (span 7).

	\item The knot $7_3$ (span 7) and the pretzel knot $P(-5,3,2) = 10_{126}$ (span 8).

\end{itemize}

\begin{figure}
	\includegraphics[width=4in]{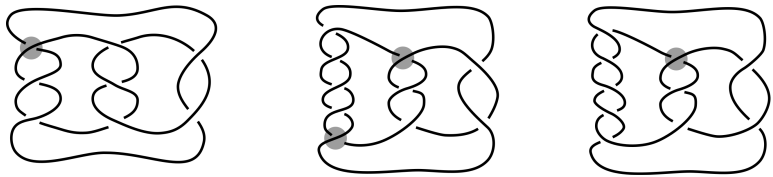}
	\caption{Each grey disk indicates where a crossing change relates the knot pictured to another knot whose span of the Jones polynomial differs by one.}
	\label{fig:span1}
\end{figure}

However, we know of no general construction of knots related by a crossing change that alters the span by one. Therefore we ask:
\begin{question}
Is there an edge between $[n]$ and $[n+1]$ for all $n \geq 3$ in $\widetilde{\mathcal{QK}}_{\backoverslash}^{\spn}$? \end{question}

For the following statement, we will consider a quotient of the H(2)-Gordian graph of \emph{links}, denoted $\mathcal{L}_{\smoothing}$, rather than the version for knots. In particular, the vertex set of $\mathcal{L}_{\smoothing}$ consists of isotopy classes of unoriented links. An H(2)-move may or may not be component preserving, and an edge in this graph exists whenever a pair of links $L, L'$ is related by any H(2)-move. 
We remark that while the Jones polynomial of links is sensitive to orientation, and the H(2)-move may or may not respect strand orientation, the span of the Jones polynomial (or Kauffman bracket) is insensitive to orientation. 
Thus the quotient graph $\mathcal{QL}^{\spn}_{\smoothing}$ is well-defined for unoriented links.

\begin{theorem}\label{Gamma2 span structure}
The graph $\mathcal{QL}^{\spn}_{\smoothing}$ is isomorphic to $\mathbb{K}_{\infty}$. 
\end{theorem}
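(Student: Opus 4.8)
The plan is to show that the graph $\mathcal{QL}^{\spn}_{\smoothing}$ is the complete graph on the vertex set $\{[n] : n \geq 0\}$, which by Corollary \ref{torus bracket} is realized (for instance, by the $T(2,n)$ torus links, giving every value $n \geq 0$). So it suffices to produce, for an arbitrary pair $m < n$, a pair of links with spans $m$ and $n$ related by a single H(2)-move (not necessarily component-preserving, since we are in $\mathcal{L}_{\smoothing}$). The strategy splits into cases according to the parities of $m$ and $n$, using four building-block families whose spans we have already computed: the torus links $T(2,k)$ with $\spn = k$ (Corollary \ref{torus bracket}), the generalized twist links $K(q,p)$ with $\spn = q+p$ (Lemma \ref{generalized twist bracket}), the knots $K_q$ with $\spn = q+3$ (Lemma \ref{knots Kq}), and the knots $K_{q,r}$ with $\spn = q+r+2$ (Lemma \ref{knots Kqr}).

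The key step is to exploit the H(2)-moves already recorded in Figure \ref{fig:twist-torus-operations} and Figure \ref{fig:specialknots}. First, from Figure \ref{fig:twist-operations} we have $d_2(K(m,n), K(m-1,n)) = 1$, so consecutive spans among the generalized twist links are joined: $K(a,b)$ and $K(a-1,b)$ give an edge $\{[a+b], [a+b-1]\}$. This already produces every edge $\{[n],[n+1]\}$ with $n$ large enough, and more generally $\{[m],[n]\}$ by chaining — but chaining is not allowed for a single edge, so instead we use connected sums: $\spn$ is additive over connected sum, and $T(2,k) \# J$ is related to $J$ by one H(2)-move (as noted after Figure \ref{fig:twist-torus-pretzel}, since $d_2(J \# T(2,n), J) = 1$). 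Thus for any $m \geq 3$ and any $n > m$ with $n - m \geq 2$, take a link $L$ of span $m$ and form $L \# T(2, n-m)$, which has span $n$ and is one H(2)-move from $L$; this gives the edge $\{[m],[n]\}$. The remaining cases are the edges $\{[0],[n]\}$, handled directly by $T(2,n)$ and its H(2)-unknotting move (Figure \ref{fig:torus-operations}), and the edges $\{[n],[n+1]\}$, which is exactly where the special families $K_q$ and $K_{q,r}$ enter: Figure \ref{fig:specialknots} shows an H(2)-move (the shaded band) relating $K_{q,r}$ to $K_q$, and $\spn(K_{q,r}) - \spn(K_q) = (q+r+2) - (q+3) = r - 1$, so taking $r = 3$ gives consecutive spans $q+3$ and $q+4$ joined by one H(2)-move, covering $\{[n],[n+1]\}$ for all odd-indexed $n = q+3 \geq 6$; the low cases $\{[3],[4]\}, \{[4],[5]\}, \{[5],[6]\}$ and any parity gaps are then mopped up with torus links, generalized twist links, and small explicit examples.

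The main obstacle I expect is precisely the edges $\{[n],[n+1]\}$ — as the authors note in Proposition \ref{Gamma span structure} and the surrounding discussion, for \emph{knots} they could not realize these, and the whole point of passing to \emph{links} (and allowing non-component-preserving H(2)-moves) is that one gains the extra flexibility to do so. So the heart of the proof is verifying that the families $K_q$, $K_{q,r}$ (together with torus and twist links for the finitely many small and parity-exceptional cases) genuinely cover every consecutive pair; this is why Lemmas \ref{knots Kq} and \ref{knots Kqr} and the bracket computation in Lemma \ref{bracket T} were set up in Section \ref{sec:bracket}. Once every edge $\{[m],[n]\}$ with $n - m \geq 2$ and every edge $\{[n],[n+1]\}$ is established, the graph is complete on $\{[n] : n \geq 0\}$, hence isomorphic to $\mathbb{K}_\infty$.
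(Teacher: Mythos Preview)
Your overall strategy is sound and in fact simpler than the paper's, but there is one concrete arithmetic error and one misidentification of purpose.

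\textbf{The error.} You claim that $K_q$ and $K_{q,r}$ with $r=3$ give consecutive spans. They do not: $\spn(K_{q,r})-\spn(K_q)=(q+r+2)-(q+3)=r-1$, and since $r\geq 3$ is odd, the minimum gap is $2$, not $1$. So $K_q,K_{q,3}$ realize $\{[q+3],[q+5]\}$, not $\{[q+3],[q+4]\}$. Fortunately this is not fatal: you already observed earlier that $K(a,b)$ and $K(a-1,b)$ (or equally $T(2,n)$ and $T(2,n+1)$) are related by a single H(2)-move, which genuinely gives every edge $\{[n],[n+1]\}$ for $n\geq 2$. That is exactly how the paper handles consecutive spans (its Case~4). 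Your third paragraph then contradicts your second by declaring $\{[n],[n+1]\}$ a ``remaining case'' and invoking $K_q,K_{q,r}$; drop that part.

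\textbf{The misidentification.} The families $K_q,K_{q,r}$ are not in the paper to produce span-difference one. They are used (paper's Case~5) for the edges $\{[m],[n]\}$ with $m,n$ both even and $n-m\geq 2$, and the reason is that the authors want to realize as many edges as possible by \emph{knots} (see their Remark after the proof): when $n-m$ is even, $T(2,n-m)$ is a two-component link, so the connected-sum trick would leave the knot graph. Your approach, by contrast, freely uses links and handles all $n-m\geq 2$ uniformly via $L\#T(2,n-m)$; this is legitimate in $\mathcal{QL}^{\spn}_{\smoothing}$ and makes $K_q,K_{q,r}$ entirely unnecessary for your argument. (Your remark that Proposition~\ref{Gamma span structure} shows consecutive spans are hard is also a misreading: that proposition concerns the \emph{crossing-change} graph, where the difficulty is genuine; for H(2)-moves consecutive spans are easy.)

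\textbf{A gap.} You never treat $m\in\{1,2\}$. These values do occur for links (two-component unlink has span $1$, Hopf link has span $2$), so the vertices $[1],[2]$ are present and you must produce all edges incident to them. The paper handles these in its Cases~2 and~3 via disjoint unions $U\sqcup T(2,n-1)$ and sums $T(2,2)\#T(2,n-2)$; your connected-sum idea extends to cover them in the same way once you pick a base link of the right span.
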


\begin{proof}
We take our indexing set for the vertices of $\mathbb{K}_{\infty}$ to be the set of non-negative integers. For every pair $n, m$, we will construct an edge in $\mathcal{QL}^{\spn}_{\smoothing}$. There are seven cases in our analysis; the first three are specific to the vertices $[0],[1],[2]$, and the latter cases are more general. We will use knots to construct edges whenever possible (see Remark \ref{knot argument} below), even if a simpler construction can be made with links.

\textbf{Case 1.} Recall that span $T(2,0)=1$, span $T(2,1)=0$, and span $T(2,n)=n$ for all $n\geq 2$. Edges incident to $[0]$ are realized by an H(2)-move that unknots the $T(2, n)$ torus knot or link, as seen in Figure \ref{fig:twist-torus-operations}(a). 

\textbf{Case 2.} Consider edges incident to $[1]$. The edge $\{[1],[2]\}$ is realized by an H(2)-move relating the two-component and three-component unlinks. If $n>2$, the edge $\{[1],[n]\}$ is found by taking the disjoint union $U\sqcup T(2, n-1)$. By axiom (i) of the definition of the bracket, this has span $n$, and is related to the two-component unlink (of span 1) by the banding that unties the torus link. See Figure \ref{fig:linksum}.

\textbf{Case 3.} Consider edges incident with $[2]$. The edge $\{[2],[3]\}$ is realized by the Hopf link and trefoil knot, and the edges $\{[2], [n]\}$ for $n\geq 4$ are realized by the H(2)-move that unknots the second summand of $T(2, 2)\# T(2, n-2)$. (Figure \ref{fig:linksum} defines this connect sum.)

\textbf{Case 4.} Consider $\{[n], [n+1]\}$ for $n\geq 2$. These edges are realized by torus knots and links because $T(2, n)$ is related to $T(2, n+1)$ by an H(2)-move. Alternatively, these edges can be realized by pairs of twist knots $K(2, n-2)$ and $K(2, n-1)$. Any twist knot $K(2, n-2)$ has span $n$, and these pairs are related by a single H(2)-move in the twist region, as in Figure \ref{fig:twist-torus-operations}(b). 

\textbf{Case 5.}  Consider $\{[m], [n]\}$ when $n-m>1$ and $n > m \geq 4$ are even. 
This edge is realized by the knots $K_q$ and $K_{q,r}$ where $q=m-3$ and $r=n-m+1$. 
Here, $m\geq 4$ even implies $q \geq 1$ odd and $n\geq 6$ even implies $r\geq 3$ odd. 
These knots are shown in Figure \ref{fig:specialknots}. By Lemma \ref{knots Kq}, the span of $K_q$ is $q+3 = m$ and by Lemma \ref{knots Kqr}, the span of $K_{q,r}$ is $q+r+2 = n$. These knots are related by the shaded band shown in Figure \ref{fig:specialknots}. 

\textbf{Case 6.}  Consider $\{[m], [n]\}$ when $n-m>1$ and $n>m\geq 3$ are odd. Because $n-m$ is even, the generalized twist link $K(m, n-m)$ is a knot. An H(2)-move relates the torus knot $T(2, m)$, which has span $m$, to $K(m, n-m)$, which has span $n$. This is shown in Figure \ref{fig:twist-torus-operations}(c). 

\textbf{Case 7.}  Consider $\{[m], [n]\}$ when $n-m>1$ and $n>m\geq 3$ are of different parity. Then $n-m$ is odd, and so $T(2, n-m)$ is a knot. The connected sum $T(2, n-m) \# K(2, m-2)$ has span $n$, and is related to $K(2, m-2)$, which has span $m$, by the H(2)-move that unknots the torus knot summand. 

This case analysis establishes that the pair $\{[n],[m]\}\in E(\mathcal{QL}^{\spn}_{\smoothing})$ for all $n, m\in \mathbb{N}\cup\{0\}$.
\end{proof}

\begin{remark}
\label{knot argument}
Notice that most of the edges constructed in the proof above are also contained in the graph $\widetilde{\mathcal{QK}}^{\spn}_{\smoothing}$. The only exception are the edges from $[0]$ to $[2n]$, realized by the torus links $T(2, 2n)$, and the edges that are incident with the vertices $[1]$ and $[2]$. This prompts us to ask:

\end{remark}

\begin{question}
Are there edges $\{[0],[2n]\}$, for $n\geq 2$, realized by pairs of knots? Is the graph $\widetilde{\mathcal{QK}}^{\spn}_{\smoothing}$ isomorphic to $\mathbb{K}_{\infty}$?
\end{question}

\begin{figure}
	\centering
	\includegraphics[width=4in]{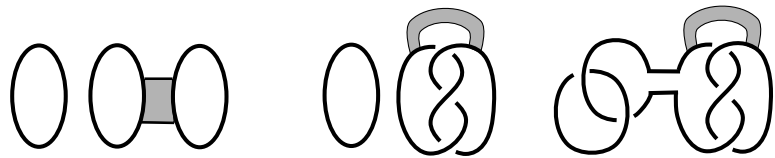}
	\caption{The connected sums and bandings of links in case (2) and case (3) of Theorem \ref{Gamma2 span structure}.}
	\label{fig:linksum}
\end{figure}

\subsection*{Acknowledgements}
The authors appreciate the support of the Honors Summer Undergraduate Research Program (HSURP) at Virginia Commonwealth University, and thank Slaven Jabuka and Beibei Liu for helpful conversations.

\bibliographystyle{alpha}
\bibliography{bibliography}

\end{document}